\newcommand{\qed}{\hfill$\Box$\medskip}
\newcommand{\pr}{\operatorname{pr}}
\def\G{\Gamma}
\def\cJ{{\mathcal J}}
\def\cC{{\mathcal C}}
\def\cD{{\mathcal D}}
\def\C{{\mathbb C}}
\def\Z{{\mathbb Z}}
\def\pr{\prime}
\def\pt{{\rm pt}}
\newtheorem{theorem}{Theorem}[section]
\newtheorem{lemma}[theorem]{Lemma}
\newtheorem{proposition}[theorem]{Proposition}
\newtheorem{remark}[theorem]{Remark}
\newenvironment{proof}{{\noindent\it Proof.}}{}
\begin{document}
\title{Symplectic fillings of links of quotient surface singularities}
\author{Mohan Bhupal and Kaoru Ono\thanks
{Partly supported by the Grant-in-Aid in Scientific Research No.\
14340019, Japan Society for the Promotion of Science.}}

\date{}
\maketitle

\begin{abstract}
We study symplectic deformation types of minimal symplectic fillings
of links of quotient surface singularities.  In particular, there
are only finitely many symplectic deformation types for each
quotient surface singularity.
\end{abstract}


\section{Introduction}
\label{intro} In recent years, the geometry of contact structures on
three manifolds has been a subject of intensive studies. In
particular, tight contact structures have been the focus of
interest. For instance, tight contact structures on lens spaces have
now been classified by Giroux and Honda. The link $L$ of an isolated
surface singularity $(V,O)$ provides examples of tight contact three
manifolds. Namely, the complex tangency to the link gives a
codimension one distribution $\xi = TL \cap J(TL)$ which is
completely non-integrable, hence a contact structure. Here $J$ is
the complex structure on $V \setminus O$. Let $\pi\colon
\widetilde{V} \to V$ be a resolution of the singularity and $U$ a
neighborhood of $O$ in  $V$ such that $\partial U =L$. Then
$\pi^{-1}(U)$ is a so-called symplectic filling of $(L, \xi)$ and
$\xi$ is a symplectically fillable contact structure, which implies
that $\xi$ is tight by a theorem of Eliashberg and Gromov.

It is also interesting to classify symplectic fillings of the links
of certain classes of isolated surface singularities. For the case
of cyclic quotient singularities of $A_{n,1}$-type, McDuff
classified symplectic deformation classes of minimal symplectic
fillings \cite{M}. H.~Ohta and the second named author investigated
the cases of simple singularities \cite{OO2} and simple elliptic
singularities \cite{OO4}. Meanwhile, Lisca \cite{L} presents a
classification for the case of cyclic quotient singularities. In
this paper, we study the case of quotient surface singularities $\C
/ \Gamma$, where $\Gamma$ is a finite subgroup of $Gl(2;\C)$. Note
that this class contains all simple singularities, which is  the
case where $\Gamma \subset Sl(2;\C)$. Simple singularities are
characterized as isolated surface singularities which are described
by both quotient singularities and hypersurface singularities. Thus
we can use both aspects in the argument. Namely, since they are
quotient singularities, the link is a spherical space form. In
particular, they carry a metric of positive scalar curvature. This
fact is one of main ingredients in \cite{OO1}. They are also
hypersurface singularities with explicit defining equations. This
enables us to describe the compactifications of their Milnor fibres
in appropriate weighted projective spaces (K.~Saito). The results in
\cite{OO2} are some of the main ingredients in this paper. Since the
situation here is more complicated than in the case of simple
singularities, we have to study rational curves with negative
self-intersection numbers carefully. The main theorem is the
following.

\begin{theorem}
A symplectic filling of the link of a quotient surface singularity
is symplectic deformation equivalent to the complement of a certain
divisor in an iterated blow-up of ${\C}P^2$ or ${\C}P^1 \times
{\C}P^1$.
\end{theorem}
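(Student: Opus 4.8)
The plan is to reduce the statement to the classification of symplectic rational surfaces by capping off the contact boundary $(L,\xi)$ of the filling. Write $L=S^3/\Gamma$ and let $\pi\colon\widetilde V\to V=\C^2/\Gamma$ be the minimal good resolution, whose exceptional set $\cE$ is a configuration of rational curves of self-intersection $\le-2$ with star-shaped dual graph (linear in the cyclic case), and pick a neighborhood $U$ of $O$ with $\partial U=L$. Since $V$ is a rational singularity, $\widetilde V$ is a smooth rational surface, so it compactifies to a smooth projective rational surface $Z$ in which $Z\setminus\pi^{-1}(\mathrm{int}\,U)$ deformation retracts onto a configuration $\cD$ of rational curves. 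Equipping $Z$ with a Kähler form and removing an open plurisubharmonic neighborhood of $\cE$, one obtains a compact symplectic $4$-manifold $D$ which is a plumbing-type neighborhood of a symplectic configuration $\cD$ and has concave contact boundary $(L,\xi)$; one checks that $(D,\partial D)$ is determined up to symplectic deformation by $\Gamma$ (for cyclic $\Gamma$ this $\cD$ is Lisca's ``dual'' linear plumbing \cite{L}).

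Now let $(W,\omega)$ be a minimal symplectic filling of $(L,\xi)$. Gluing $D$ to $W$ along $L$ yields a closed symplectic $4$-manifold $X:=W\cup_L D$ containing the symplectic configuration $\cD$, and by construction $W$ is symplectic deformation equivalent to the complement of a regular neighborhood of $\cD$ in $X$. It therefore suffices to prove that $X$ is symplectic deformation equivalent to an iterated blow-up of $\C P^2$ or $\C P^1\times\C P^1$. To this end I would exhibit in $X$ a symplectically embedded sphere of non-negative self-intersection --- the image in $\cD\subset D$ of the proper transform in $Z$ of a generic line or ruling --- and apply McDuff's structure theorem: a closed symplectic $4$-manifold carrying such a sphere is rational or (irrationally) ruled. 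A constraint on the filling, namely $b_1(W)=0$ --- which one establishes along the lines of \cite{OO1} and \cite{OO2} using that $L$ is a spherical space form --- together with $\pi_1(D)=1$ forces $b_1(X)=0$ and rules out the irrationally ruled case; hence $X$ is a rational symplectic surface, i.e.\ an iterated blow-up of $\C P^2$ or $\C P^1\times\C P^1$.

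The step I expect to be the main obstacle is the production and control of that sphere of non-negative self-intersection. The configuration $\cD$, like $\cE$, typically consists entirely of curves of \emph{negative} self-intersection, and the surgery $W\cup_L D$ destroys precisely those curves of $Z$ that met $\cE$, so no non-negative sphere is present in $X$ a priori. To deal with this I would fix a compatible almost complex structure $J$ making $\cD$ holomorphic and analyze the $J$-holomorphic spheres of $X$: by positivity of intersections and the adjunction formula, the exceptional spheres in a blow-down of $X$ can be taken in controlled position relative to $\cD$ --- disjoint from it, or meeting one prescribed component transversally once --- so that successively blowing them down carries $\cD$ onto an explicit divisor in $\C P^2$ or $\C P^1\times\C P^1$ and at the same time produces the required sphere. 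This simultaneously shows $X$ is rational and pins down the divisor in the statement. The argument must be carried out case by case over the classification of star-shaped resolution graphs of quotient surface singularities, with the results of \cite{OO2} on simple singularities (the case $\Gamma\subset Sl(2;\C)$) used both as a model and as input wherever an $ADE$ sub-configuration occurs. Finally, since the self-intersection numbers occurring in $\cD$ are determined by $\Gamma$ and $W$ is minimal, the number of blow-ups needed to build $X$ is bounded, which also gives the finiteness of symplectic deformation types asserted in the abstract.
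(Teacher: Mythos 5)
Your overall architecture (cap off the filling with a concave neighbourhood of a divisor, prove the closed manifold is rational, then locate the divisor by blowing down) is the paper's strategy, but there is a genuine gap at exactly the point you flag yourself: the rationality step. The paper does not use the ``dual'' configuration you propose (for cyclic $\Gamma$, Lisca-type all-negative plumbing); it chooses the compactifying divisor, via the McCarthy--Wolfson method, so that it already contains a symplectically embedded sphere $L$ with $L\cdot L=1$ (cyclic case), or so that a short explicit sequence of blow-downs (and one blow-up) inside the configuration turns it into one containing a rational curve with a $(2,3)$-cusp of positive self-intersection, possibly together with $0$-curves through the cusp. Rationality of the glued manifold is then immediate from McDuff's theorem (Theorem~\ref{thm:McDuff}), from the cusp-curve theorem of \cite{OO3} (Theorem~\ref{thm:Ohta-Ono}), or, in the dihedral and type $(3,1)$ cases, from the fact that the complement of a neighbourhood of the $0$-curve plus cusp curve is a filling of a simple (dihedral) singularity, so the results of \cite{OO2} apply. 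In your set-up no curve of nonnegative square is available, and your proposed remedy --- ``analyze the $J$-holomorphic exceptional spheres, blow them down in controlled position, and thereby simultaneously produce the nonnegative sphere and prove rationality'' --- is circular: the existence of pseudoholomorphic representatives of exceptional classes, the fact that they can be arranged compatibly with the configuration, and above all the fact that the blow-down process terminates at $\C P^2$ or $\C P^1\times\C P^1$ rather than at some other minimal symplectic $4$-manifold, are all consequences of knowing beforehand that the ambient manifold is rational (or at least contains a positive cusp curve or a nonnegative sphere). Nothing in your argument rules out, say, a minimal manifold of general type containing your all-negative configuration; blowing down whatever exceptional curves exist simply stops at a minimal model with no further information.

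Even granting rationality, the second half of your sketch (``positivity of intersections and adjunction put the exceptional spheres in controlled position'') compresses what is actually the technical bulk of the paper: Proposition~\ref{prop:2.4} and Lemmas~\ref{lem:2.5}--\ref{lem:2.11} are needed to ensure that a maximal family of $(-1)$-curves can be taken pseudoholomorphic for the same almost complex structure as the configuration, that they are disjoint, and that each meets the cusp curve, the $0$-curves and the string $C_1,\dots,C_k$ in the very restricted ways that make the inductive blow-down (admissible/pre-admissible configurations, Claims 1--3 in Proposition~\ref{prop:case2}) terminate in one of the listed standard divisors in $\C P^2$ or $\C P^1\times\C P^1$. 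So the proposal as written is missing the key idea that makes the first step work --- the specific choice of compactifying divisor containing a $+1$-sphere or a positive cuspidal curve --- and substantially underestimates the case-by-case control needed in the second step.
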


A detailed description of the symplectic fillings is given later. In
particular, we get finiteness of symplectic deformation types of
minimal symplectic fillings for each quotient surface singularity.


\section{Preliminaries}
\label{sec:2} Our basic strategy is the following. Firstly, find an
appropriate strong concave filling $Y$ of the link of the
singularity and glue it with a given symplectic filling $X$ to get a
closed symplectic $4$-manifold $Z$. Secondly, use a rationality
criterion for symplectic $4$-manifolds in order to show that $Z$ is
rational. Thirdly, study how $Y$ is embedded in the rational
symplectic $4$-manifold $Z$. Eventually, $Y$ is chosen as a regular
neighborhood of a certain divisor $K$ in a blow-up of a rational
ruled surface. (We call $K$ a compactifying divisor.) So the third
step is replaced by the study of embeddings of $K$ in $Z$.

In this section, we present several facts which are necessary to
carry out these steps. First of all, we recall some basic results
for rational and ruled symplectic $4$-manifolds.

\begin{theorem}[McDuff \cite{M}] \label{thm:McDuff}
Let $(M, \omega)$ be a closed symplectic $4$-manifold. If $M$
contains a symplectically embedded $2$-sphere $C$ of nonnegative
self-intersection number $k$, then $M$ is either a rational
symplectic $4$-manifold or a blow-up of a ruled symplectic
$4$-manifold. In particular, if $k=0$ (resp.\ $1$), $M$ becomes a
ruled symplectic $4$-manifold (resp.\ the complex projective plane)
after blowing down symplectic $(-1)$-curves away from $C$.
\end{theorem}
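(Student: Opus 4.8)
The plan is to invoke Gromov's theory of pseudoholomorphic curves. First I would fix an almost complex structure $J$ on $M$, tamed by $\omega$, for which the symplectically embedded sphere $C$ is $J$-holomorphic; such $J$ exists since any symplectically embedded surface can be made holomorphic for a suitable compatible, hence tamed, $J$. Then I would reduce to the cases $k=0$ and $k=1$: if $k\geq 2$, blow up $k-1$ points of $C$ in general position, so that the proper transform of $C$ becomes a symplectically embedded sphere of self-intersection $1$; by the $k=1$ case the blown-up manifold is rational, and hence $M$, being a blow-down of a rational symplectic $4$-manifold, is rational too. So it suffices to handle $k=0$ and $k=1$.

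In both cases I would study the moduli space $\cM$ of unparametrized $J$-holomorphic spheres in the class $A=[C]$. Adjunction for an embedded $J$-holomorphic sphere gives $c_1(A)=A\cdot A+2$, so the expected dimension of $\cM$ is $2c_1(A)-2=2A\cdot A+2$. Since the curves are spheres and $A\cdot A\geq 0$ forces $c_1(A)\geq 1$, automatic transversality in real dimension four makes $\cM$ a smooth manifold of this dimension for \emph{every} admissible $J$, with every member embedded. Positivity of intersections then shows that two distinct members sharing no component are disjoint when $k=0$ (there $A\cdot A=0$) and meet in a single point when $k=1$; in particular the given curve $C$ is itself a smooth member.

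The decisive step is the Gromov compactification of $\cM$. A degenerating sequence converges to a stable map whose components are $J$-holomorphic spheres in classes $A_i$ with $\sum_i A_i=A$; positivity of intersections gives $A_i\cdot A_j\geq 0$ for $i\neq j$, while the adjunction inequality together with index considerations forces every somewhere-injective component of negative self-intersection to be an exceptional $(-1)$-sphere. Inserting these constraints into $\sum_i A_i=A$ and $c_1(A)=A\cdot A+2$, one checks that each limiting configuration is a tree of spheres which, after blowing down its $(-1)$-spheres, collapses back to a single embedded sphere of square $A\cdot A$, and that only finitely many members of the family are singular. Since these $(-1)$-spheres occur in members disjoint from $C$, blowing them down is legitimate and away from $C$, and turns $\cM$ into an honest foliation: for $k=0$ an $S^2$-fibration of the blow-down over a closed surface, exhibiting the blow-down as a ruled symplectic $4$-manifold and $M$ as a blow-up of it; for $k=1$ a pencil identifying the blow-down with $\C P^2$ and $M$ as a rational manifold. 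This also yields the ``in particular'' statement, with $C$ persisting as a fibre, respectively as a line.

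The hardest part will be exactly this compactness-and-classification step: ruling out or completely describing the bubbling demands careful bookkeeping with positivity of intersections, adjunction, and index, and one must verify that all the $(-1)$-spheres arising can be blown down symplectically and disjointly from $C$, so that the preceding reductions are valid. A subsidiary technical point is confirming that $\cM$ has the expected dimension with no multiply covered or otherwise degenerate curves in the relevant classes, which is where the automatic-transversality phenomenon special to dimension four is indispensable.
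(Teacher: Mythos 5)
You should first note that the paper does not prove this statement at all: it is imported verbatim as McDuff's theorem from \cite{M} and used as a black box, so there is no internal proof to compare with; the only meaningful benchmark is McDuff's original argument (see also McDuff--Salamon), and your outline does follow that strategy in broad strokes --- the moduli space of $J$-holomorphic spheres in the class $[C]$, Gromov compactness, and blowing down exceptional spheres to exhibit a ruling when $k=0$ or $\C P^2$ when $k=1$.

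As a proof, however, the proposal has genuine gaps beyond the ones you flag. First, the claim that automatic transversality makes $\cM$ smooth ``for every admissible $J$, with every member embedded'' is overstated: Hofer--Lizan--Sikorav applies to immersed or embedded curves with $c_1>0$, so you must separately exclude multiply covered representatives of $[C]$ and control non-embedded somewhere-injective ones; adjunction handles the smooth members, but the singular members, the finiteness of the singular fibres, and the bubble analysis all require genericity of $J$, and $J$ is constrained along $C$ --- reconciling these two requirements is exactly where care is needed. Second, the ``decisive step'' (every Gromov limit is a tree whose exceptional components are disjoint from $C$ and blow down to restore an embedded fibre, and the resulting family is an honest $S^2$-fibration, resp.\ a pencil realizing $\C P^2$, of the blown-down manifold) is essentially the content of McDuff's paper; asserting it is asserting the theorem, so nothing is proved until that bookkeeping is carried out. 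Third, your reduction for $k\ge 2$ silently invokes that a symplectic blow-down of a rational symplectic $4$-manifold is again rational; since ``rational'' here means a symplectic blow-up of $\C P^2$, this is not a formal cancellation statement but a nontrivial piece of the very structure theory being proved, so as written the reduction is close to circular. A safer route is to blow up points of $C$ to reach the square-zero case and recover the structure of the original $M$ directly from the resulting ruling and the configuration of exceptional classes, as in McDuff's treatment.
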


Here a rational symplectic $4$-manifold means a symplectic blow-up
of the complex projective plane at some points, and a ruled
symplectic $4$-manifold means a $2$-sphere bundles over an oriented
surface with a symplectic structure which is nondegenerate on each
fibre.  Combining Theorem~2.1 and Taubes' theorem ``SW=Gr'', we get
the following.

\begin{theorem}[Ohta--Ono \cite{OO}, Liu \cite{Li}]
Let $(M, \omega )$ be a closed symplectic $4$-manifold such that
$\int_M c_1(M) \wedge \omega >0.$ Then $(M, \omega )$ is either a
rational symplectic $4$-manifold or a (blow-up of a) ruled
symplectic $4$-manifold.
\end{theorem}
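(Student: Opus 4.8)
\medskip
\noindent\textbf{Proof proposal.}
The plan is to combine Taubes' identification ``$SW=Gr$'' with Theorem~\ref{thm:McDuff}, after first reducing to the minimal case. If $(M,\omega)$ is not minimal, choose a symplectically embedded $(-1)$-sphere $E$, let $(M',\omega')$ be the symplectic blow-down, and let $\pi\colon M\to M'$ be the blow-down map. Let $e\in H^2(M;\R)$ be the Poincar\'e dual of $E$, so that $e\cdot e=-1$ and $e$ is orthogonal to $\pi^*H^2(M';\R)$; writing $\varepsilon>0$ for the symplectic area of $E$, one has $c_1(M)=\pi^*c_1(M')-e$ and $[\omega]=\pi^*[\omega']-\varepsilon\,e$, hence $\int_{M'}c_1(M')\wedge\omega'=\int_M c_1(M)\wedge\omega+\varepsilon>0$. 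Since $b_2$ drops under blow-down, after finitely many such steps we reach a minimal $(M_0,\omega_0)$ still satisfying $\int_{M_0}c_1\wedge\omega_0>0$; as ``rational'' and ``blow-up of ruled'' are both stable under symplectic blow-up, it suffices to prove the assertion for $M_0$. So assume from now on that $(M,\omega)$ is minimal, and note that $b^+(M)\ge1$ because $\int_M\omega\wedge\omega>0$.

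The case $b^+(M)\ge2$ is then impossible. Indeed, for a symplectic $4$-manifold with $b^+\ge2$ Taubes showed that the canonical class $K=-c_1(M)$ has nonzero Gromov invariant, so $K$ is represented by a $J$-holomorphic subvariety $C$ for a generic $\omega$-compatible $J$. By positivity of symplectic area, $K\cdot[\omega]=\int_C\omega\ge0$, whence $\int_M c_1(M)\wedge\omega=-K\cdot[\omega]\le0$, contradicting the hypothesis. Hence $b^+(M)=1$.

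It remains to treat $b^+(M)=1$, which I expect to be the main obstacle. Here the Seiberg--Witten invariants depend on a chamber in the positive cone of $H^2(M;\R)$, and the tools are Taubes' $b^+=1$ nonvanishing theorem for the canonical spin$^c$ structure together with the wall-crossing formula in its general ($b_1\ge0$) form due to Li and Liu. The wall-crossing formula makes the Seiberg--Witten invariant nonzero in at least one of the two chambers for a suitably chosen spin$^c$ structure whose first Chern class $c$ has $c\cdot c\ge0$; the hypothesis $\int_M c_1\wedge\omega>0$, equivalently $K\cdot[\omega]<0$, is what identifies this as the chamber in which ``$SW=Gr$'' counts (simple) $J$-holomorphic curves, and controls the class of the resulting curve. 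Chasing this through yields a nonempty $J$-holomorphic curve in $M$; minimality together with the adjunction/genus formula — which controls the genera and self-intersection numbers of its irreducible components — then allows one to extract a symplectically embedded $2$-sphere $C\subset M$ with $C\cdot C\ge0$. Finally, McDuff's Theorem~\ref{thm:McDuff} shows that $M$ is a rational symplectic $4$-manifold or a blow-up of a ruled symplectic $4$-manifold, and undoing the blow-downs of the first paragraph gives the same conclusion for the original manifold. The delicate part throughout the $b^+=1$ step is the bookkeeping of chambers — matching the chamber in which Taubes' computation is valid against the one to which ``$SW=Gr$'' refers — and ensuring that the class one produces is realized by an \emph{embedded} sphere of non-negative square rather than by a configuration of curves of negative self-intersection.
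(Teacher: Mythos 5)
Your outline follows exactly the route the paper indicates for this quoted result---reduce to the minimal case, rule out $b^+\geq 2$ via Taubes' nonvanishing for the canonical class, and in the $b^+=1$ case combine Taubes' ``SW=Gr'' with the wall-crossing formula of \cite{OO} and \cite{Li} to produce an embedded sphere of nonnegative square to which Theorem~\ref{thm:McDuff} applies---and your blow-down and $b^+\geq 2$ steps are carried out correctly. The paper itself offers no proof beyond the remark that the statement follows by combining Theorem~\ref{thm:McDuff} with ``SW=Gr'' and citing \cite{OO} and \cite{Li}, so your sketch, with the $b^+=1$ chamber bookkeeping deferred to those references, is essentially the same argument.
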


If the pseudo\-holomorphic curve $C$ is singular, we have the
following result as a byproduct of uniqueness of minimal symplectic
fillings of the link of a simple singularity \cite{OO2}.

\begin{theorem}[Ohta--Ono \cite{OO3}] \label{thm:Ohta-Ono}
Let $M$ be a closed symplectic $4$-manifold containing a
pseudoholomorphic rational curve $C$ with a $(2,3)$-cusp point.
Suppose that $C$ is nonsingular away from the $(2,3)$-cusp point. If
the self-intersection number $C^2$ of $C$ is positive, then $M$ must
be a rational symplectic $4$-manifold and $C^2$ is at most $9$.
Moreover, if $M \setminus C$ does not contain any symplectic
$(-1)$-curves, then $C$ represents the Poincar\'e dual to $c_1(M)$,
that is, an anti-canonical divisor. When $C^2=9$, $M={\C}P^2$ and
$C$ is a pseudoholomorphic cuspidal cubic curve.
\end{theorem}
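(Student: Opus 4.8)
The plan is to first determine the homology class of $C$ by adjunction, then use the cusp to force $M$ into the rational/ruled world, and finally analyse how a cuspidal rational curve can sit inside a rational symplectic $4$-manifold.

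First I would apply the adjunction formula for (possibly singular) pseudoholomorphic curves to $C$. Since $C$ is the image of a somewhere injective $J$-holomorphic sphere whose only singularity is one $(2,3)$-cusp, and that cusp has $\delta$-invariant $1$, one gets
\[
c_1(M)\cdot[C]=\chi({\C}P^1)+[C]^2-2\delta=2+C^2-2=C^2 ,
\]
so $c_1(M)\cdot[C]=C^2>0$. Equivalently, symplectically smoothing the cusp replaces $C$ by an embedded symplectic torus $T$ with $[T]=[C]$, $T^2=C^2>0$ and $c_1(M)\cdot[T]=C^2$.

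Next I would resolve the cusp by a symplectic blow-up $\pi\colon\widetilde M\to M$ at the cusp point $p$. Since $C$ near $p$ is modelled on the holomorphic cusp $\{(z^2,z^3)\}$ (multiplicity $2$), the proper transform $\widetilde C$ is a smoothly embedded symplectic $2$-sphere with $[\widetilde C]=\pi^*[C]-2E$, hence $\widetilde C^2=C^2-4$, meeting the exceptional $(-1)$-sphere $E$ with $\widetilde C\cdot E=2$. If $C^2\ge 4$, then $\widetilde C^2\ge 0$ and Theorem~\ref{thm:McDuff} applied to $\widetilde C$ shows that $\widetilde M$ — and hence $M$, obtained by blowing $E$ down — is a rational symplectic $4$-manifold or a blow-up of a ruled one. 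The latter is impossible in the presence of a cuspidal rational curve: in a blow-up of a ruled surface over a base of positive genus every rational curve projects to a point, hence is a union of components of a (possibly blown-up) fibre and so of smooth spheres, contradicting the cusp; thus the ruling base is rational and $M$ is rational. The remaining cases $C^2\in\{1,2,3\}$ are where the real work lies, and — as the lead-in to the theorem indicates — where the uniqueness of minimal symplectic fillings of the link of a simple singularity \cite{OO2} enters: one performs further blow-ups and blow-downs near $\widetilde C\cup E$ and appeals to the rationality criterion of Ohta--Ono and Liu (Theorem~2.2), or equivalently applies the Seiberg--Witten adjunction inequality to the torus $T$ — were $M$ neither rational nor ruled it would give $0=2g(T)-2\ge T^2+|K_M\cdot[T]|=2C^2>0$, a contradiction — after which the previous remark again rules out the irrational ruled case. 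Either way $M$ is rational; this step is the main obstacle.

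Finally, assume $M$ is rational. Blowing down, one at a time, the symplectic $(-1)$-spheres contained in $M\setminus C$ changes neither $C$ nor $C^2$ and yields a rational $M'$ containing $C$, with the same self-intersection, and with $M'\setminus C$ free of symplectic $(-1)$-spheres. Writing $M'={\C}P^2\#N\,\overline{{\C}P^2}$ and $[C]=dH-\sum_i m_iE_i$, I would combine $c_1(M')\cdot[C]=C^2$ with positivity of intersection of $C$ against each exceptional sphere class and each line class — an exceptional class pairing to $0$ with $[C]$ would be represented by a symplectic $(-1)$-sphere disjoint from $C$ — to conclude that $[C]$ is Poincar\'e dual to $c_1(M')$, i.e.\ an anticanonical class. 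Hence $C^2=c_1(M')^2=9-N\le 9$, so the original $C^2$ is at most $9$ as well. If moreover $M\setminus C$ already contains no symplectic $(-1)$-sphere, then no blow-downs are needed, $[C]$ is Poincar\'e dual to $c_1(M)$, and $C^2=9-N$ with $M={\C}P^2\#N\,\overline{{\C}P^2}$; then $C^2=9$ forces $N=0$, i.e.\ $M={\C}P^2$ and $[C]=3H$, and a degree-$3$ pseudoholomorphic rational curve having a single $(2,3)$-cusp is a cuspidal cubic, as claimed.
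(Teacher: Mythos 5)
First, a point of context: this paper does not prove Theorem~\ref{thm:Ohta-Ono} at all --- it is imported from \cite{OO3}, where (as the sentence introducing it indicates) it is obtained as a byproduct of the classification of minimal symplectic fillings of links of simple singularities \cite{OO1}, \cite{OO2}: essentially one reduces to a cuspidal rational curve of self-intersection $1$, whose complement is a filling of the $E_8$-link $\Sigma(2,3,5)$, and the negative-definiteness and uniqueness results then give rationality, the bound $C^2\leq 9$ and the anticanonical statement. Your route (adjunction, smoothing/blowing up the cusp, Theorem~\ref{thm:McDuff}, then a lattice-positivity argument) is a genuinely different one, and its opening moves are sound: the computation $c_1(M)\cdot[C]=C^2$, the proper transform of square $C^2-4$ tangent to the exceptional sphere, and the homological exclusion of blow-ups of irrationally ruled manifolds (spherical classes there pair trivially with the pulled-back fibre class, so they lie in a negative semidefinite sublattice, contradicting $C^2>0$).

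However, there are two genuine gaps. (i) The rationality step, which you yourself flag as ``the main obstacle,'' is not actually closed for $C^2\leq 3$. Your Seiberg--Witten adjunction argument with $|K_M\cdot[T]|$ is valid only when $b^+(M)>1$; it rules out that case, but when $b^+=1$ the chambered (Li--Liu type) inequality one can invoke is one-sided, and since $K_M\cdot[T]=-T^2$ it reads $0\geq T^2+K_M\cdot[T]=0$, giving no contradiction. Likewise, appealing to Theorem~2.2 requires $\int_M c_1\wedge\omega>0$, which you have not established: $c_1\cdot[T]>0$ does not imply $c_1\cdot[\omega]>0$ when $c_1$ may have negative square. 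So ``$M$ is rational'' (indeed even ``rational or ruled'') is unproved exactly where the cited source brings in the simple-singularity filling results that you mention only in passing. (ii) The final identification $[C]=-K_{M'}$ is asserted rather than proved: you need an actual argument that the adjunction equality $(K_{M'}+[C])\cdot[C]=0$ together with $[C]\cdot E\geq 1$ for \emph{every} symplectic $(-1)$-class $E$ (itself requiring a genericity statement like Proposition~\ref{prop:2.4} plus positivity of intersections to produce a $J$-holomorphic representative disjoint from $C$ when $E\cdot[C]=0$) forces $K_{M'}+[C]=0$; ``combine with positivity against exceptional and line classes'' is where the work lies. Also $M'$ need not be of the form ${\C}P^2\#N\overline{{\C}P^2}$: it can be $\C P^1\times\C P^1$, a case that genuinely occurs (the cuspidal bi-degree $(2,2)$ curve with $C^2=8$ used throughout Section~\ref{sec:4}), so that case must be treated separately, even though the arithmetic there is easy.
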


We call a homology class $e \in H_2(M;\Z)$ a symplectic $(-1)$-class
if $e$ is represented by a symplectically embedded $2$-sphere of
self-intersection number $-1$. A symplectic $(-1)$-curve class is
represented by a $J$-holomorphic sphere for a generic compatible (or
tame) almost complex structure $J$. However, if we restrict the
class of compatible (or tame) almost complex structure, this may not
be the case. Here we have the following (essentially Proposition~4.1
in \cite{OO3}).

\begin{proposition} \label{prop:2.4} Let $M$ be a symplectic $4$-manifold and
$C_1, \dots ,C_k$ irreducible $J_0$-holomorphic curves in $M$ with
respect to a compatible (or tame) almost complex structure $J_0$.
Suppose that each of $C_1, \dots ,C_k$ is either nondegenerate,
singular or of higher genus. Then, for a generic $J$ among
compatible (or tame) almost complex structures for which $C_1,
\dots, C_k$ are pseudoholomorphic, any symplectic $(-1)$-curve has a
unique $J$-holomorphic representative.
\end{proposition}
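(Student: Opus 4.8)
The plan is to carry out the standard Sard--Smale transversality argument for pseudoholomorphic spheres, but inside the space $\cJ(C_1,\dots,C_k)$ of compatible (or tame) almost complex structures for which $C_1,\dots,C_k$ are pseudoholomorphic, and then to combine the outcome with the well-known structure theory of symplectic $(-1)$-classes. A symplectic $(-1)$-class $e\in H_2(M;\Z)$ satisfies $e^2=-1$ and, by adjunction, $\inner{c_1(M)}{e}=1$; hence $e$ is primitive and the expected dimension of the moduli space $\cM_J(e)$ of simple $J$-holomorphic spheres in class $e$ is $0$. There are only countably many such classes, and a countable intersection of residual subsets of $\cJ(C_1,\dots,C_k)$ is residual, so it suffices to fix one class $e$ and produce a residual set of $J$ that works for it.

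Passing to a Banach-manifold completion, $\cJ(C_1,\dots,C_k)$ is a Banach submanifold of the space of all compatible (or tame) almost complex structures (the defining condition is, at the smooth points of the $C_i$, the pointwise linear requirement that $TC_i$ be $J$-invariant). I would form the universal moduli space
\[
\widetilde{\cM}(e)=\bigl\{(u,J):J\in\cJ(C_1,\dots,C_k),\ u\ \text{a simple}\ J\text{-holomorphic sphere},\ [u]=e\bigr\}
\]
and show that it is a Banach manifold with the projection $\pi\colon\widetilde{\cM}(e)\to\cJ(C_1,\dots,C_k)$ Fredholm of index $0$. The only step that is not routine is surjectivity of the linearized operator while $J$ varies only within $\cJ(C_1,\dots,C_k)$: the usual argument perturbs $J$ near a somewhere-injective point of $u$, which is permitted as long as $u$ has such a point off the ``frozen'' locus $C_1\cup\dots\cup C_k$. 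This fails only if $u(S^2)\subseteq C_{i_0}$ for some $i_0$, in which case $u(S^2)=C_{i_0}$ (as $u$ is nonconstant and $C_{i_0}$ irreducible), so there is a nonconstant holomorphic map $S^2\to C_{i_0}$. If $C_{i_0}$ has higher genus this is impossible, since such a map would factor through a positive-genus normalization; in the remaining cases $u$ must be the normalization of $C_{i_0}$ itself (a proper multiple cover is never somewhere injective), so $[C_{i_0}]=e$ and $C_{i_0}$ is a (possibly singular) rational $(-1)$-curve. But a rational curve with first Chern number $1$ in a symplectic $4$-manifold is automatically Fredholm regular, so such a pair $(u,J)$ is already a smooth isolated point of $\widetilde{\cM}(e)$ for \emph{every} admissible $J$ and needs no perturbation. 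Hence $\widetilde{\cM}(e)$ is a Banach manifold, and Sard--Smale yields a residual set of $J\in\cJ(C_1,\dots,C_k)$ for which $\cM_J(e)$ is a finite set of regular curves, each embedded by the adjunction formula.

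For such a $J$, existence and uniqueness of the representative follow from the standard behaviour of $(-1)$-classes. By Taubes' ``$SW=Gr$'' theorem together with Gromov compactness, $e$ is represented by a (possibly reducible) $J$-holomorphic cycle for every tame $J$; for generic $J$ a dimension count on the strata of reducible configurations, using $\inner{c_1(M)}{e}=1$ and primitivity of $e$, rules out every proper degeneration, leaving a single embedded $J$-holomorphic sphere. Uniqueness is positivity of intersections in dimension four: two distinct irreducible $J$-holomorphic spheres in the class $e$ would meet with nonnegative local indices, giving $0\le e^2=-1$, a contradiction. Intersecting the residual sets over all symplectic $(-1)$-classes finishes the proof.

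The entire difficulty is concentrated in the transversality step: one must verify that the perturbations of $J$ still allowed once $C_1,\dots,C_k$ are required to stay pseudoholomorphic suffice to make the universal moduli space smooth. This is exactly where the hypothesis that each $C_i$ is nondegenerate, singular, or of higher genus is used — it guarantees that a $(-1)$-sphere representing $e$ cannot be trapped inside the frozen locus $\bigcup_i C_i$, except in the rigid case $e=[C_i]$, which is harmless by automatic regularity. Everything else is the classical Hofer--Salamon-type argument together with the known structure of $(-1)$-classes in four dimensions.
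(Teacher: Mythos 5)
Your transversality half (the universal moduli space over ${\mathcal J}(C_1,\dots,C_k)$, perturbing $J$ near injective points off the frozen locus) and your uniqueness half (positivity of intersections plus adjunction) are fine, and they fit the framework behind the result — which, note, the paper does not prove itself but imports from Proposition~4.1 of \cite{OO3}. The genuine gap is in the existence step. You dispose of it with ``for generic $J$ a dimension count on the strata of reducible configurations \dots rules out every proper degeneration'', but inside the constrained family this is not a dimension count you are entitled to: the frozen curves $C_i$, and their multiple covers, are $J$-holomorphic for \emph{every} $J$ in the family, so they can occur as components of the Gromov limit of the $(-1)$-spheres no matter how generic $J$ is, and such components can have nonpositive Chern number and negative index. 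Genericity only forces $c_1\geq 1$ for simple sphere components not contained in $C_1\cup\dots\cup C_k$; it gives no control over frozen components, so the identity $1=c_1(e)=\sum_j m_j\,c_1(A_j)$ no longer forces the limit to be irreducible. Excluding frozen components from the limit is precisely the nontrivial content of \cite{OO3}, and of the parallel steps in this paper (see the proofs of Lemma~\ref{lem:4.1}, Lemma~\ref{lem:dihed} and Lemma~\ref{lem:type31}, where one argues separately that no component of the limiting stable map coincides with, or multiply covers, the frozen singular curve); it is there, not only in the transversality step, that the hypothesis on the $C_i$ has to do its work.

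That your existence argument never invokes the hypothesis is fatal, because without the hypothesis the statement is false and your argument would apply verbatim. Blow up $\C P^2$ at a point and then at a point of the exceptional curve, and freeze the resulting embedded holomorphic $(-2)$-sphere $C$ in the class $E_1-E_2$. Then $e=E_1$ is a symplectic $(-1)$-class with $e\cdot[C]=-1$, so by positivity of intersections no $J$ making $C$ pseudoholomorphic admits an irreducible $J$-holomorphic representative of $e$: the class degenerates permanently into $C$ plus a curve in the class $E_2$. Your universal-moduli-space step goes through unchanged for this frozen $C$ (your ``trapped in the frozen locus'' analysis is vacuous since $[C]\neq e$), and your dimension count does not see this degeneration, so the proposal as written would ``prove'' existence in a situation where it fails. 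What is missing is the component-by-component analysis of the limit cycle — using positivity of intersections with the frozen curves, adjunction, and the homological constraints that the hypothesis (nondegenerate, singular, or higher genus) makes available — which is exactly what the cited Proposition~4.1 of \cite{OO3} supplies.
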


Here we call $C_i$ nondegenerate if the linearized operator of the
pseudoholomorphic curve equation is surjective at $C_i$. Now we
collect a series of observations.

If a pseudoholomorphic curve $C$ intersects a $(-1)$-curve
transversally, Lemma~4.1 in \cite{OO2} ensures that the image under
the blowing down map is also pseudoholomorphic with respect to a
suitable almost complex structure. Transversality of intersections
can be achieved by a small perturbation of the almost complex
structure.

\begin{lemma} \label{lem:2.5} Let $M$ be a closed symplectic
$4$-manifold and $L$ a symplectically embedded $2$-sphere of
self-intersection number $1$. Then any irreducible singular or
higher genus pseudoholomorphic curve $C$ in $M$ satisfies $C \cdot L
\geq 3.$ In particular, neither an irreducible singular nor a higher
genus pseudoholomorphic curve is contained in $M \setminus L$.
\end{lemma}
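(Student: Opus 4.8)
The plan is to use Theorem~\ref{thm:McDuff} to bring $M$ into a
standard form and then run an adjunction/intersection-number argument.
Since $L$ is a symplectically embedded $2$-sphere with $L^2=1>0$, by
Theorem~\ref{thm:McDuff} we may blow down symplectic $(-1)$-curves in
$M\setminus L$ to arrive at $\C P^2$, with $L$ becoming a symplectically
embedded sphere in the class $h$ of a line (its self-intersection is
unchanged because we only blow down curves disjoint from $L$). The
irreducible pseudoholomorphic curve $C$ either survives as an
irreducible pseudoholomorphic curve in the blown-down manifold, or its
image acquires singularities; in either case, after a small perturbation
of $J$ making all intersections with the exceptional curves transverse
(invoking Lemma~4.1 of \cite{OO2} as quoted just above), the image
$\bar C$ in $\C P^2$ is an irreducible pseudoholomorphic curve that is
still singular or of genus $\geq 1$, and $C\cdot L=\bar C\cdot\bar L$
since the blow-downs take place away from $L$.

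Next I would identify $\bar C$ homologically. In $\C P^2$ a positively
immersed irreducible pseudoholomorphic curve lies in a class $d\,h$ with
$d\geq 1$, and $\bar C\cdot\bar L=d$. So it suffices to show $d\geq 3$.
This follows from the adjunction inequality for pseudoholomorphic curves
in $\C P^2$: writing $g$ for the geometric genus and $\delta\geq 0$ for
the total contribution of the singularities, one has
$g+\delta=\tfrac{(d-1)(d-2)}{2}$. If $\bar C$ is singular then
$\delta\geq 1$, and if $\bar C$ has higher genus then $g\geq 1$; in
either case the right-hand side is at least $1$, forcing
$\tfrac{(d-1)(d-2)}{2}\geq 1$, hence $d\geq 3$. (The cases $d=1,2$ give
$0$, so they are genuinely excluded.) This yields $C\cdot L=d\geq 3$,
and the final assertion is immediate: a curve contained in $M\setminus L$
would satisfy $C\cdot L=0<3$.

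The main obstacle is making the blow-down step rigorous in the
symplectic category: one must be sure that the symplectic $(-1)$-curves
used can be chosen disjoint from $L$ and from finitely many prescribed
points or tangencies of $C$, and that $C$ can be assumed $J$-holomorphic
throughout. This is exactly the content of Theorem~\ref{thm:McDuff}
together with Proposition~\ref{prop:2.4} and Lemma~4.1 of \cite{OO2},
so the ingredients are all available; the only care needed is a generic
choice of $J$ for which $C_1=C$ (singular or higher genus) is
pseudoholomorphic and the relevant $(-1)$-curves are pseudoholomorphic
with transverse intersections. Once one is on $\C P^2$ the adjunction
computation is routine. A secondary point to verify is that the
geometric genus and singularity count behave correctly under the
perturbation (the image can only degenerate further, so $\delta$ and
$g$ do not decrease), which is what keeps the inequality valid after
blowing down.
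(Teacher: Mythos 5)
Your proposal is correct and follows essentially the same route as the paper: blow down a maximal family of pseudoholomorphic $(-1)$-curves away from $L$ (using Proposition~\ref{prop:2.4} and Lemma~4.1 of \cite{OO2} to keep $L$ and $C$ pseudoholomorphic and the blow-down well behaved), reduce to $\C P^2$ with $L$ a line, and conclude that the image of a singular or higher-genus irreducible curve has degree at least $3$. The only difference is that you spell out the adjunction/degree--genus computation that the paper leaves implicit.
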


\begin{proof}  If necessary, we perturb the almost complex structure
slightly in such a way that the $(-1)$-curves do not pass through
the singular points of $C$. We then blow down a maximal disjoint
family of pseudoholomorphic $(-1)$-curves away from $L$. Here we can
assume that $L$ and $C$ are also pseudoholomorphic with respect to
the same almost complex structure (Proposition~\ref{prop:2.4}). Then
$M$ becomes the complex projective plane and $L$ becomes a line.
Since $C$ is singular and irreducible or of higher genus, the image
$\overline{C}$ has degree at least 3. Thus we have $C \cdot L =
\overline{C} \cdot L \geq 3.$ \qed
\end{proof}

\begin{lemma} \label{lem:2.6} Let $M$ be a closed symplectic
$4$-manifold and $C$ a pseudoholomorphic  rational curve with a
$(2,3)$-cusp point as a unique singularity. Suppose that the
self-intersection number of $C$ is positive. Then neither an
irreducible singular nor a higher genus pseudoholomorphic curve is
contained in $M \setminus C$.
\end{lemma}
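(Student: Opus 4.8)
The plan is to mimic the argument of Lemma~\ref{lem:2.5}, replacing the line in $\C P^2$ by a cuspidal cubic and invoking Theorem~\ref{thm:Ohta-Ono} to control the ambient manifold. Suppose, for contradiction, that $D \subset M \setminus C$ is an irreducible pseudoholomorphic curve which is either singular or of higher genus. First I would apply Proposition~\ref{prop:2.4} to $C_1 = C$ (and also $C_2 = D$ if convenient): after a small perturbation of the almost complex structure we may assume that every symplectic $(-1)$-curve has a unique $J$-holomorphic representative, and that these $(-1)$-curves avoid the $(2,3)$-cusp point of $C$ and the singular points of $D$. Transversality of all the relevant intersections is arranged by a further small perturbation, as in the discussion preceding Lemma~\ref{lem:2.5}.

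Next I would blow down a maximal disjoint family of pseudoholomorphic $(-1)$-curves chosen away from $C$. Since $C^2 > 0$, Theorem~\ref{thm:Ohta-Ono} tells us that $M$ is a rational symplectic $4$-manifold with $C^2 \le 9$; I would keep blowing down $(-1)$-curves disjoint from $C$ until no such curve remains, which raises $C^2$ each time (the blown-down points lie off $C$, so the self-intersection only goes up when a $(-1)$-curve meets $C$ — one must be slightly careful here and instead argue that the process terminates because $b_2$ drops). At the terminal stage $M' \setminus C$ contains no symplectic $(-1)$-curve, so by the ``moreover'' part of Theorem~\ref{thm:Ohta-Ono}, $C$ is Poincaré dual to $c_1(M')$, i.e. an anti-canonical cuspidal curve, and in the extreme case $C^2 = 9$ we have $M' = \C P^2$ with $C$ a cuspidal cubic. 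The image $\overline{D}$ of $D$ is still an irreducible pseudoholomorphic curve, still singular or of higher genus (blowing down off $D$ cannot resolve a singularity of $D$ nor lower its genus), and it is still disjoint from $C$.

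Now I would derive the contradiction from positivity of intersections: $\overline{D} \cdot C = 0$, yet $C$ is an anti-canonical divisor, so by the adjunction/genus formula applied to $\overline{D}$ one has $2g(\overline{D}) - 2 + (\text{contribution of the singularities of } \overline{D}) = \overline{D}^2 + K_{M'} \cdot \overline{D} = \overline{D}^2 - \overline{D}\cdot C = \overline{D}^2$. Since $\overline{D}$ is singular or of positive genus the left-hand side is $\ge 0$, forcing $\overline{D}^2 \ge 0$; then Theorem~\ref{thm:McDuff} (or directly the rationality of $M'$) shows $\overline{D}$ moves in a positive-dimensional family and must meet the anti-canonical curve $C$, contradicting $\overline{D} \cdot C = 0$. (In the $\C P^2$ case this is immediate: a curve disjoint from the cubic $C$ would have degree $0$.) This contradiction establishes the lemma.

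I expect the main obstacle to be the bookkeeping in the blow-down step: one must ensure that successively blowing down $(-1)$-curves away from $C$ actually terminates in a configuration where $M' \setminus C$ is free of symplectic $(-1)$-classes while keeping $C$ pseudoholomorphic, $C^2 > 0$, and $D$ untouched — and one must check that Theorem~\ref{thm:Ohta-Ono} applies at each stage (it does, since $C^2$ stays positive). The adjunction-type inequality for the possibly-singular pseudoholomorphic curve $\overline{D}$ is standard once positivity of intersections is in hand, so the genuine content is organizing the reduction to the anti-canonical situation.
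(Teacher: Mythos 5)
Your reduction coincides with the paper's up to the last step: blow down a maximal family of pseudoholomorphic $(-1)$-curves away from $C$ (using Proposition~\ref{prop:2.4}), invoke Theorem~\ref{thm:Ohta-Ono} to conclude that $C$ is an anti-canonical divisor in the resulting rational manifold $M'$, observe that $c_1(M')[\overline D]=C\cdot\overline D=0$ since $\overline D$ is disjoint from $C$, and apply adjunction to get $\overline D\cdot\overline D\ge 0$. (Your worry about $C^2$ changing is moot: the contracted curves are disjoint from $C$, so $C^2$ is unchanged and the process terminates because $b_2$ drops.)

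The gap is in how you derive the final contradiction. You assert that $\overline D^2\ge 0$ together with rationality of $M'$ (or Theorem~\ref{thm:McDuff}) implies that $\overline D$ ``moves in a positive-dimensional family and must meet $C$.'' Theorem~\ref{thm:McDuff} applies only to symplectically embedded spheres, which $\overline D$ is not; and since $c_1(M')[\overline D]=0$, the virtual dimension of the relevant moduli space is $2(g-1)$, which is negative when $\overline D$ is a singular rational curve and, even when nonnegative, does not produce an actual family of curves sweeping out $M'$ and forced to hit $C$. Your parenthetical degree argument covers only $M'=\C P^2$, i.e.\ $C^2=9$; in general $M'$ is a blow-up of $\C P^2$, since $(-1)$-curves meeting $C$ need not have been contracted. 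The paper closes the argument homologically: because $M'$ is rational, $b_2^+=1$, and $C^2>0$, the orthogonal complement of $[C]$ in $H_2(M';\R)$ is negative definite (equivalently, the intersection form on $M'\setminus C$ is negative definite); since $[\overline D]\cdot[C]=0$, this forces $\overline D^2\le 0$, hence $\overline D^2=0$ and $[\overline D]=0$, which is absurd for a nonconstant pseudoholomorphic curve of positive symplectic area. Replacing your ``moving family'' step by this negative-definiteness argument repairs the proof; the rest of your outline is sound.
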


\begin{proof}  Suppose that $D$ is such a singular or a higher genus
pseudoholomorphic curve. Let $J$ be a compatible almost complex
structure on $M$ with respect to which $C$ and $D$ are
pseudoholomorphic. By Proposition~\ref{prop:2.4}, we may assume that
all symplectic $(-1)$-classes are represented by $J$-holomorphic
$(-1)$-curves. We blow down a maximal family of $J$-holomorphic
$(-1)$-curves in $M \setminus C$. Thus we may assume that $M
\setminus C$ does not contain any symplectic $(-1)$-curves. By
Theorem~\ref{thm:Ohta-Ono}, $C$ is an anti-canonical divisor and we
have $c_1(M)[D]=0$. If $D$ is singular or of higher genus, the
adjunction formula tells us that $D \cdot D \geq 0$. On the other
hand, the intersection form on $M \setminus C$ is negative definite.
Hence $[D]$ is homologous to zero, which is absurd. \qed
\end{proof}

\begin{lemma} \label{lem:2.7}
Let $M$ and $C$ be as in Lemma~\ref{lem:2.6}. Suppose that the
self-intersection number of $C$ is at least 2. Then there does not
exist a pseudo\-holomorphic curve $A$ such that $A$ is either
singular and irreducible or of higher genus and such that $A \cdot C
= 1$.
\end{lemma}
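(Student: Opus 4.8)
The plan is to argue by contradiction, following the same blowing-down strategy used in Lemmas 2.5 and 2.6. Suppose such a curve $A$ exists, with $A$ either singular and irreducible or of higher genus, and with $A \cdot C = 1$. Choose a compatible (or tame) almost complex structure $J$ for which both $C$ and $A$ are pseudoholomorphic; by Proposition~\ref{prop:2.4} we may further assume that every symplectic $(-1)$-class is represented by a $J$-holomorphic $(-1)$-curve, since $C$ is singular and $A$ is singular or of higher genus. As in Lemma~\ref{lem:2.6}, blow down a maximal family of $J$-holomorphic $(-1)$-curves contained in $M \setminus C$, so that afterwards $M \setminus C$ contains no symplectic $(-1)$-curves. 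By Theorem~\ref{thm:Ohta-Ono}, $C$ is then an anti-canonical divisor, i.e. $[C]$ is Poincar\'e dual to $c_1(M)$, and $C^2 \le 9$.

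First I would pin down what happens to $A$ under this blow-down. Each time we blow down a $(-1)$-curve $E$ disjoint from $C$, the class $[A]$ changes to $[\overline{A}] = [A] - (A\cdot E)E$, while $A\cdot C$ is unchanged because $E\cdot C = 0$. After perturbing $J$ slightly (as in the remark preceding Lemma~\ref{lem:2.5}) so that the $(-1)$-curves avoid the singular points of $A$ and meet $A$ transversally, the image $\overline{A}$ is again pseudoholomorphic, still singular and irreducible or of higher genus, and $\overline{A}\cdot C = 1$. So we may as well assume from the start that $M \setminus C$ contains no symplectic $(-1)$-curve, that $C$ is anti-canonical with $2 \le C^2 \le 9$, and that $A$ is singular or higher genus with $A\cdot C = 1$.

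Now I would run the adjunction/genus estimate. Since $C$ is anti-canonical, $c_1(M)[A] = [C]\cdot[A] = A\cdot C = 1$. The adjunction formula (or adjunction inequality) for the pseudoholomorphic curve $A$ gives
\[
A\cdot A = 2g(A) - 2 + c_1(M)[A] + 2\delta(A) \ge c_1(M)[A] + (2g(A)-2) + 2\delta(A),
\]
where $\delta(A)\ge 0$ counts singularities and $g(A)\ge 0$; since $A$ is singular ($\delta(A)\ge 1$) or of higher genus ($g(A)\ge 1$), the correction term $2g(A)-2+2\delta(A)$ is at least $0$, hence $A\cdot A \ge c_1(M)[A] = 1$, so $A^2 \ge 1$ and in particular $A^2 \ge 0$. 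The contradiction should come from intersecting $A$ with $C$: I expect that the constraint $A\cdot C = 1$ together with $A^2\ge 1$, $C^2\ge 2$, $[C]=\mathrm{PD}(c_1(M))$, and the light-cone/Hodge-index structure of the intersection form on a rational symplectic $4$-manifold forces an inequality that $C^2\ge 2$ violates. Concretely, one can play $A$ against $C$ using the fact that $[C]$ has positive square and pairs positively with the symplectic class, so on the $[C]$-orthogonal complement the form is negative definite; writing $[A] = \lambda[C] + [A']$ with $[A']\cdot[C]=0$ gives $\lambda = 1/C^2$ and $A^2 = \lambda^2 C^2 + (A')^2 = 1/C^2 + (A')^2 \le 1/C^2 < 1$ as soon as $C^2 \ge 2$, contradicting $A^2 \ge 1$.

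The main obstacle, I expect, is making the last step fully rigorous in the symplectic category rather than over $\mathbb{Q}$-homology alone: $[A']$ need not be an integral class and the ``negative definite on $[C]^\perp$'' statement must be invoked correctly for a rational symplectic $4$-manifold (blow-up of $\mathbb{C}P^2$), where the intersection form is $\langle 1\rangle \oplus m\langle -1\rangle$ and $c_1$ is $3H - \sum E_i$. One should verify that $C^2\ge 2$ (not merely $\ge 1$) is exactly what is needed to push $A^2$ strictly below $1$, which is why the hypothesis of the lemma is sharper than that of Lemma~\ref{lem:2.6}; the case $C^2 = 1$ would only give $A^2 < 1$ fails, i.e. $A^2 \le 1$, which is not a contradiction. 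A cleaner route that avoids fractional classes: after the blow-downs, if $C^2 = 9$ then $M = \mathbb{C}P^2$ and $C$ is a cuspidal cubic, and a degree-$d$ curve $A$ with $A\cdot C = 3d = 1$ is impossible; if $2 \le C^2 \le 8$, list the finitely many possibilities for $([C], c_1)$ on a blow-up of $\mathbb{C}P^2$ and check directly that no class $[A]$ with $[A]\cdot[C] = 1$ and $[A]^2 \ge 1$ exists. I would present the light-cone argument as the main line and fall back on this enumeration only if the integrality issue proves delicate.
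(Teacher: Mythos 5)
Your argument is correct, but it takes a genuinely different route from the paper. The paper's proof is a two-line reduction to Lemma~\ref{lem:2.6}: since $A\cdot C=1$, the curves meet transversally at a single smooth point of $C$; blowing up $M$ at that point makes the proper transform of $A$ disjoint from the proper transform of $C$, and because $C^2\geq 2$ the latter still has positive self-intersection and a $(2,3)$-cusp as its unique singularity, so Lemma~\ref{lem:2.6} is violated --- this is precisely where the hypothesis $C^2\geq 2$ (rather than $C^2>0$) enters. You instead re-run the machinery inside the proof of Lemma~\ref{lem:2.6}: make $M\setminus C$ minimal by blowing down, invoke Theorem~\ref{thm:Ohta-Ono} to make $C$ anti-canonical, get $A^2\geq c_1(M)[A]=A\cdot C=1$ from adjunction, and contradict this with the light-cone estimate $A^2\leq (A\cdot C)^2/C^2=1/C^2<1$. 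Both are valid; the paper's proof buys brevity and reuse of an existing lemma, while yours is self-contained past Theorem~\ref{thm:Ohta-Ono} and makes transparent why $C^2\geq 2$ is sharp. Two small remarks: your worry about integrality at the end is unfounded --- the decomposition $[A]=\lambda[C]+[A']$ is taken in $H_2(M;\mathbb{R})$, and negative definiteness on $[C]^{\perp}$ follows from $b_2^+=1$ for a rational surface together with $C^2>0$, so the fallback enumeration is unnecessary; and the pushforward class under blowing down $E$ satisfies $\pi^*[\overline{A}]=[A]+(A\cdot E)[E]$ rather than your formula with a minus sign, though this does not affect the only fact you use, namely $\overline{A}\cdot C=A\cdot C=1$.
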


\begin{proof}  If such a curve $A$ exists, we blow up $M$ at the
intersection point of $A$ and $C$. Then the proper transform of $A$
violates the conclusion of Lemma~\ref{lem:2.6}. \qed
\end{proof}

If the self-intersection number of $C$ is $1$, there exist singular
or genus $1$ pseudoholomorphic curves $A$ such that $A \cdot C =1$.
In addition, if $M \setminus C$ is minimal, it turns out that $A$ is
homologous to $C$ in $M$.

\begin{lemma} \label{lem:2.8}
Let $M$ be a closed symplectic $4$ manifold and $L$ a symplectically
embedded sphere of self-intersection number $1$. Then no
symplectically embedded sphere of nonnegative self-intersection
number is contained in $M \setminus L$. Pseudoholomorphic
$(-1)$-curves in $M \setminus L$ are mutually disjoint.
\end{lemma}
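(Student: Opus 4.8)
The plan is to prove Lemma~\ref{lem:2.8} by the same blow-down strategy used in the earlier lemmas, reducing to the model case $M = \C P^2$ with $L$ a line.

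\medskip

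\noindent\emph{Proof proposal.} For the first assertion, suppose for contradiction that $S \subset M \setminus L$ is a symplectically embedded sphere with $S \cdot S = m \geq 0$. First I would perturb the compatible almost complex structure $J_0$ (for which $L$ and $S$ are both pseudoholomorphic, which is possible since they are disjoint symplectic surfaces) so that, by Proposition~\ref{prop:2.4}, every symplectic $(-1)$-class has a unique $J$-holomorphic representative; moreover I can arrange that the $(-1)$-curves meet $L$ and $S$ transversally and avoid any prescribed points. Then I blow down a maximal disjoint family of $J$-holomorphic $(-1)$-curves lying in $M \setminus L$, keeping $L$ pseudoholomorphic throughout (using Lemma~4.1 of \cite{OO2} as in the proof of Lemma~\ref{lem:2.5}). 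Since $L$ is a symplectically embedded sphere of self-intersection $1$, after blowing down $M$ becomes $\C P^2$ and $L$ becomes a line; the image $\overline{S}$ of $S$ is a pseudoholomorphic sphere in $\C P^2$ disjoint from the line $L$ (disjointness is preserved because we only blew down curves away from $L$, and we may assume the $(-1)$-curves do not pass through $S \cap$(anything forcing new intersections with $L$)). But a pseudoholomorphic curve in $\C P^2$ has positive degree $d \geq 1$, hence meets every line in $d \geq 1$ points, contradicting $\overline{S} \cdot L = 0$. Therefore no such $S$ exists.

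\medskip

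For the second assertion, let $E_1, E_2$ be two pseudoholomorphic $(-1)$-curves contained in $M \setminus L$ and suppose $E_1 \neq E_2$. If they were not disjoint, then $E_1 \cdot E_2 \geq 1$ (pseudoholomorphic intersections are positive), but then $E_1 + E_2$ would be represented by a connected pseudoholomorphic curve, and resolving/smoothing the intersection points, or directly computing, $(E_1 + E_2)^2 = -2 + 2(E_1 \cdot E_2) \geq 0$. More cleanly: after a small perturbation making all intersections transverse, a neighborhood of $E_1 \cup E_2$ contains a symplectically embedded sphere in the class $E_1 + E_2$ (the symplectic smoothing of the union of two spheres meeting transversally at $E_1\cdot E_2 \geq 1$ points), with self-intersection $(E_1+E_2)^2 \geq 0$, and this sphere lies in $M \setminus L$ since $E_1, E_2$ do. This contradicts the first assertion. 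Hence $E_1 \cap E_2 = \emptyset$.

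\medskip

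\noindent\emph{Expected main obstacle.} The routine steps (blow-down, identification with $\C P^2$, degree count) are by now standard and mirror Lemma~\ref{lem:2.5}. The one point needing care is the second assertion's symplectic smoothing of $E_1 \cup E_2$ at their intersection points into a single symplectically embedded sphere of the correct self-intersection — one must check the smoothing can be done symplectically and keeps the result inside $M \setminus L$; alternatively one avoids this by observing directly that if $E_1 \neq E_2$ are distinct irreducible pseudoholomorphic curves they have nonnegative intersection, so $E_1 \cdot E_2 \geq 0$, and if $E_1 \cdot E_2 \geq 1$ then for generic $J$ the class $E_1 + E_2$ (having square $\geq 0$ and being represented by a cusp/nodal pseudoholomorphic curve, hence by Theorem~\ref{thm:McDuff}-type reasoning a sphere class) produces the forbidden embedded sphere. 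Making this last reduction airtight — i.e.\ that $E_1 + E_2$ is genuinely a symplectic sphere class in $M \setminus L$ — is where I would spend the most effort.
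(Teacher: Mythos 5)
Your proof of the first assertion is sound and is the natural one: the paper states Lemma~\ref{lem:2.8} without proof, and your argument is precisely the reduction used in the proof of Lemma~\ref{lem:2.5} — blow down a maximal family of pseudoholomorphic $(-1)$-curves in $M\setminus L$ (legitimised by Proposition~\ref{prop:2.4} and Lemma~4.1 of \cite{OO2}), identify the result with $\C P^2$ with $L$ a line, and note that $\overline S\cdot L=S\cdot L=0$ since the contracted curves are disjoint from $L$, while any nonconstant pseudoholomorphic curve in $\C P^2$ has positive degree.

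The genuine gap is in the second assertion. You claim that the symplectic smoothing of $E_1\cup E_2$ at their $E_1\cdot E_2=k\geq 1$ transverse intersection points is a symplectically embedded \emph{sphere} in the class $E_1+E_2$. This is false as soon as $k\geq 2$: smoothing two embedded spheres which meet positively and transversally at $k$ points produces an embedded symplectic surface of genus $k-1$, so for $k\geq 2$ you get a torus or a higher genus surface, to which the first assertion (which is only about embedded spheres) does not apply. Your fallback remark — that $E_1+E_2$ is represented by a nodal pseudoholomorphic curve and is therefore ``a sphere class by Theorem~\ref{thm:McDuff}-type reasoning'' — does not close this hole, since Theorem~\ref{thm:McDuff} takes an embedded sphere as input, and the existence of a nodal representative of a class of nonnegative square lying in $M\setminus L$ is not by itself a contradiction.

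The repair is easy with tools already in the paper, so the statement is not in danger, but the case $k\geq2$ must be treated separately. Either observe that for $k\geq 2$ the smoothing is an embedded symplectic surface of positive genus in $M\setminus L$, which (after making it pseudoholomorphic for a compatible almost complex structure, exactly as in Remark~\ref{rem:2.12}) is excluded by Lemma~\ref{lem:2.5}, while for $k=1$ the smoothing is an embedded sphere of square $0$ and the first assertion applies; or argue in the style of Lemma~\ref{lem:2.11}: contract $E_1$, so that the image of $E_2$ is a rational curve of self-intersection $-1+k^2\geq 0$ in the complement of $L$ (which is still an embedded sphere of square $1$ in the blown-down manifold), embedded when $k=1$ — contradicting the first assertion — and singular when $k\geq 2$ — contradicting Lemma~\ref{lem:2.5}.
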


\begin{lemma} \label{lem:2.9}
Let $M$ be a closed symplectic $4$-manifold and $C$ an irreducible
singular or higher genus pseudo\-holomorphic curve. Then no
symplectically embedded sphere of nonnegative self-intersection
number is contained in $M\setminus C$.
\end{lemma}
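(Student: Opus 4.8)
The plan is to argue by contradiction and to reduce to the situation already handled by the earlier lemmas. Suppose that $S \subset M \setminus C$ is a symplectically embedded sphere with $S \cdot S = k \geq 0$. First I would invoke Theorem~\ref{thm:McDuff}: since $M$ contains the symplectically embedded sphere $S$ of nonnegative self-intersection, $M$ is either rational or a blow-up of a ruled symplectic $4$-manifold, and after blowing down a maximal family of symplectic $(-1)$-curves disjoint from $S$ we arrive at either $\C P^2$ with $S$ a line (when $k$ can be taken to be $1$), or a ruled surface with $S$ a fibre (when $k=0$). Using Proposition~\ref{prop:2.4} — valid because $C$ is singular or of higher genus — I can carry out these blow-downs with an almost complex structure for which $C$ remains pseudoholomorphic, provided the $(-1)$-curves are first perturbed off the singular locus of $C$ and made to meet $C$ transversally; by Lemma~4.1 of \cite{OO2} the image of $C$ under each blow-down stays pseudoholomorphic.

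The heart of the argument is then to derive a contradiction in each of the two model cases. In the case where we land in $\C P^2$ with $S$ becoming a line $L$, the image $\overline C$ of $C$ is an irreducible singular or higher genus curve disjoint from $L$, contradicting Lemma~\ref{lem:2.5} (which says such a curve must meet $L$ in at least $3$ points, in particular cannot be contained in the complement of $L$). In the case where we land in a ruled surface with $S$ a fibre $F$, the image $\overline C$ is an irreducible singular or higher genus pseudoholomorphic curve that is disjoint from the fibre $F$; but a curve disjoint from one fibre is disjoint from a generic fibre, and positivity of intersections together with the fact that $\overline C$ is not itself a fibre (it is singular or of positive genus, whereas fibres are embedded spheres) forces $\overline C \cdot F = 0$ with $\overline C$ then lying in a single fibre — impossible for an irreducible curve of that type. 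One should be slightly careful to treat the base having positive genus versus genus zero uniformly; in the genus-zero case the ruled surface is itself rational and one may alternatively blow down further to reduce to the $\C P^2$ case and again apply Lemma~\ref{lem:2.5}.

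The main obstacle I anticipate is bookkeeping around the blow-downs: one must ensure that the maximal family of $(-1)$-curves being contracted can simultaneously be taken disjoint from $C$ and meeting $S$ appropriately, and that the resulting image of $C$ genuinely retains its singular-or-higher-genus character (its geometric genus and the presence of a singularity are preserved under blowing down a $(-1)$-curve meeting it transversally, so this is fine, but it needs to be stated). A secondary point is that blowing down could in principle lower $S^2$ if a contracted curve met $S$; this is why one blows down only $(-1)$-curves \emph{away from} $S$, exactly as in the proof of Lemma~\ref{lem:2.5}. Once these points are in place, the contradiction with Lemma~\ref{lem:2.5} (or its fibre-class analogue) closes the argument. \qed
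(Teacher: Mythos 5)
Your cases $k=0$ and $k=1$ run essentially along the paper's own lines: for $k=1$ the statement is exactly what Lemma~\ref{lem:2.5} forbids (your detour through blowing down to $\C P^2$ is harmless but not even needed, since Lemma~\ref{lem:2.5} applies directly in $M$ to the sphere $S$ of square $1$), and for $k=0$ your ruled-surface argument --- blow down a maximal family of $(-1)$-curves away from $S$ using Proposition~\ref{prop:2.4} so that $C$ stays pseudoholomorphic, note that the image $\overline C$ is still irreducible singular or of higher genus, hence not a fibre, and derive a contradiction from $\overline C\cdot S=0$ together with positivity of intersections and the fact that fibres sweep out the manifold --- is the paper's proof almost verbatim.

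The genuine gap is the case $S\cdot S=k\geq 2$, which your write-up never actually handles. Theorem~\ref{thm:McDuff}, as cited, gives the normal forms ``ruled with $S$ a fibre'' and ``$\C P^2$ with $S$ a line'' only for $k=0$ and $k=1$; for $k\geq 2$ it only says $M$ is rational or a blow-up of a ruled manifold, and after blowing down $(-1)$-curves away from $S$ the sphere $S$ need not become a line (it could, for instance, end up as a positive section of a ruled surface), so your dichotomy ``$\C P^2$ or ruled with $S$ a fibre'' is not justified and the phrase ``when $k$ can be taken to be $1$'' is doing unexplained work. The paper closes this case with a one-line reduction you are missing: blow up $M$ at $k-1$ points of $S$ (away from $C$), so that the proper transform of $S$ is an embedded symplectic sphere of square $1$ disjoint from $C$, and then apply the $k=1$ case, i.e.\ Lemma~\ref{lem:2.5}. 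With that step inserted your argument is complete and coincides with the paper's.
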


\begin{proof}  Let $A$ be a symplectically embedded sphere in $M \setminus
C$. Set $k= A \cdot A$. If $k=1$, the result follows from
Lemma~\ref{lem:2.5}. If $k > 1$, blow up $M$ at $k-1$ points on $A$.
The proper transform of $A$ has self-intersection number $1$. So
this case is reduced to the case where $k=1$. If $k=0$, we blow down
a maximal disjoint family of pseudoholomorphic $(-1)$-curves away
from $A$.  Note that Proposition~\ref{prop:2.4} guarantees that
these $(-1)$-curves and $C$ are pseudoholomorphic with respect to
the same almost complex structure. The blown-down manifold is a
ruled symplectic $4$-manifold and $A$ is a fibre. Since $C$ is
singular and irreducible or of higher genus, its image
$\overline{C}$ under the blowing down map is also singular and
irreducible or of higher genus. Thus it is not a fibre of the
ruling. Since fibres sweep out the whole space, $\overline{C}$
should intersect a fibre. This contradicts the fact that
$\overline{C} \cdot A = C \cdot A =0.$ \qed
\end{proof}

Similarly, we get the following.

\begin{lemma} \label{lem:2.10} Let $M$ be a closed symplectic
$4$-manifold and $C$ a singular pseudoholomorphic curve. Then there
is no symplectically embedded sphere $A$ of nonnegative
self-intersection number such that $A \cdot C =1$.
\end{lemma}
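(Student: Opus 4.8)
The plan is to mimic the strategy of Lemma~\ref{lem:2.10}'s predecessors, reducing to the already-established Lemma~\ref{lem:2.5}. Suppose for contradiction that $A$ is a symplectically embedded sphere with $A \cdot A = k \geq 0$ and $A \cdot C = 1$, where $C$ is an irreducible singular pseudoholomorphic curve. The first step is to arrange, via Proposition~\ref{prop:2.4} and a small perturbation, that $C$, $A$, and a maximal disjoint family of pseudoholomorphic $(-1)$-curves are all holomorphic for a common almost complex structure, with the $(-1)$-curves avoiding the singular point(s) of $C$ and meeting $A$ transversally.

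Next I would split according to the value of $k$. If $k \geq 1$, blow up $M$ at $k-1$ generic points of $A$ (away from its intersection with $C$) so that the proper transform $A'$ has $A' \cdot A' = 1$ and still $A' \cdot C' = 1$ where $C'$ is the proper transform of $C$ (which is unchanged near the blow-up points, hence still irreducible singular). Then blow up once more at the point $A' \cap C'$: the proper transform $A''$ becomes a symplectically embedded sphere of self-intersection $0$ meeting the exceptional curve once, while $C''$ — the proper transform of $C'$ — picks up $-1$ from passing through that point but, crucially, remains irreducible and singular. Now I am in a situation with a symplectically embedded sphere of self-intersection $1$ available after one more blow-up on $A''$, or more directly: from $A' \cdot A' = 1$ and $A'$ a symplectic sphere, Lemma~\ref{lem:2.5} applies to the singular curve $C'$ and forces $C' \cdot A' \geq 3$, contradicting $C' \cdot A' = 1$. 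So the case $k \geq 1$ is immediate from Lemma~\ref{lem:2.5} once the proper transform of $A$ is brought to self-intersection $1$.

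The remaining case is $k = 0$. Here I blow down a maximal disjoint family of $J$-holomorphic $(-1)$-curves away from $A$ (legitimate by Proposition~\ref{prop:2.4}, which keeps $C$ holomorphic throughout), after which, by Theorem~\ref{thm:McDuff}, $M$ becomes a ruled symplectic $4$-manifold with $A$ a fibre of the ruling. The image $\overline{C}$ of $C$ is still irreducible and singular (blowing down away from $A$ cannot resolve the singularity of $C$, and the image of an irreducible curve is irreducible), hence it cannot be a fibre — a fibre is an embedded sphere. But then $\overline{C}$ must meet the generic fibre, and since all fibres are homologous, $\overline{C} \cdot A = \overline{C} \cdot (\text{generic fibre}) > 0$, whereas blowing down away from $A$ preserves $\overline{C} \cdot A = C \cdot A = 1$; more sharply, an irreducible singular curve meeting a fibre can be shown to meet it with multiplicity at least $2$ (its local branches near a point on a fibre, or a comparison with the section count), contradicting $\overline{C} \cdot A = 1$.

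The step I expect to require the most care is this last multiplicity count in the ruled case: one must rule out that $\overline{C}$ is a "multisection" meeting a generic fibre exactly once, i.e.\ genuinely a section. A smooth section of a ruled surface is an embedded sphere, so it cannot be singular; hence $\overline{C} \cdot A = 1$ together with irreducibility would force $\overline{C}$ to be such a section, contradicting that it is singular. Making this rigorous in the symplectic (pseudoholomorphic) category — rather than just invoking it from algebraic geometry — is where I would lean on the adjunction formula for pseudoholomorphic curves: an irreducible $J$-holomorphic curve with $\overline{C} \cdot A = 1$ in a ruled symplectic manifold has the homology class of a section, and adjunction then forces genus $0$ and embeddedness, contradicting the presence of a singular point. \qed
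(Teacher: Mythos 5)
Your case $k\geq 1$ is fine: blowing up $k-1$ points of $A\setminus C$ and applying Lemma~\ref{lem:2.5} to the resulting $+1$-sphere is a legitimate (and slightly different) route; the paper instead blows up $k$ points on $A\setminus C$ to reduce everything to $k=0$. The problem is in your $k=0$ case, in the step you yourself flag as delicate. After blowing down to a ruled manifold with $A$ a fibre, your ``rigorous'' version of the multiplicity count is: $\overline{C}\cdot A=1$ forces $\overline{C}$ into a section class, and ``adjunction then forces genus $0$ and embeddedness.'' That conclusion only holds when the ruled surface is rational. Nothing in the hypotheses says the base is a sphere: $C$ is an arbitrary singular pseudoholomorphic curve (not assumed rational), so the blow-down may be an $S^2$-bundle over a surface $\Sigma_g$ with $g\geq 1$. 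For any class $B$ with $B\cdot F=1$ one computes that the virtual genus is $g$ (independently of how many fibre classes are added), so the adjunction formula only gives: geometric genus of $\overline{C}$ at most $g$, with equality iff embedded. A singular irreducible curve in a section class would simply have geometric genus at most $g-1$, which is not absurd on its face; adjunction alone does not finish the argument, and you have not supplied the extra input needed to rule this out.

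The paper closes exactly this gap by a different, genus-free argument, which you gesture at (``local branches near a point on a fibre'') but then abandon: take the fibre $A'$ in the class $[A]$ passing through a singular point of $\overline{C}$. Since $\overline{C}$ has a point of multiplicity at least $2$ there, the local intersection number of $\overline{C}$ with $A'$ at that point is at least $2$, hence $\overline{C}\cdot[A]=\overline{C}\cdot A'\geq 2$; but fibres are homologous, so $\overline{C}\cdot[A]=C\cdot A=1$, a contradiction valid for every base genus. (Note also that your intermediate claim that $\overline{C}$ meets a \emph{generic} fibre with multiplicity at least $2$ is not what one can prove; the multiplicity $\geq 2$ occurs only for the special fibre through the singular point, and one must then use homology of fibres, as in the proofs of Lemmas~\ref{lem:2.9} and \ref{lem:2.10}.) So either add the fibre-through-the-singular-point argument, or separately justify rationality of the base before invoking adjunction; as written, the $k=0$ case is incomplete.
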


\begin{proof} If $k= A \cdot A$ is positive, we blow up $M$ at $k$
points on $A \setminus C$. So we may assume that $k=0$. We blow down
a maximal disjoint family of pseudoholomorphic $(-1)$-curves away
from $A$ to get a ruled symplectic $4$-manifold. Then the image
$\overline{C}$ of $C$ under the blowing down map satisfies
$\overline{C} \cdot A = 1$. However, there exists another fibre $A'$
passing through a singular point of $C$, for which we have
$\overline{C} \cdot A' \geq 2$. This is a contradiction. \qed
\end{proof}

The following lemma is a consequence of Theorem~1 in \cite{OO1}.

\begin{lemma} \label{lem:2.11} Let $X$ be a symplectic filling
of the link of a simple singularity. Then pseudoholomorphic
$(-1)$-curves in $X$ are mutually  disjoint.
\end{lemma}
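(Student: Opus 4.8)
The plan is to reduce the statement to the already-established rigidity of minimal symplectic fillings of links of simple singularities and, more directly, to Theorem~1 in \cite{OO1}, which concerns positive scalar curvature metrics on the link. First I would recall that the link of a simple singularity is a spherical space form $S^3/\Gamma$ with $\Gamma\subset Sl(2;\C)$, hence carries a metric of positive scalar curvature; this is precisely the hypothesis under which \cite{OO1} operates. Given the symplectic filling $X$, I would glue $X$ to a suitable strong concave filling $Y$ of $(L,\xi)$ — for instance a regular neighborhood of a compactifying divisor in a blow-up of a rational ruled surface, as in the basic strategy of Section~\ref{sec:2} — to obtain a closed symplectic $4$-manifold $Z$ in which $X$ is symplectically embedded.

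Next I would show that $Z$ is a rational symplectic $4$-manifold. The compactifying divisor for a simple singularity can be arranged to contain a symplectically embedded $2$-sphere of nonnegative self-intersection number (or a pseudoholomorphic rational curve with a $(2,3)$-cusp of positive self-intersection, as in Theorem~\ref{thm:Ohta-Ono}), so that Theorem~\ref{thm:McDuff}, or the $c_1\wedge\omega>0$ criterion of Theorem~2.3, applies; Theorem~1 of \cite{OO1} supplies exactly this rationality conclusion together with control on the homology, using the positive scalar curvature of $L$. Once $Z$ is known to be rational and the position of $Y$ inside $Z$ is understood, the homology classes of $X$ are constrained: the intersection form on the part of $H_2(Z)$ coming from $X$ (i.e.\ the sublattice orthogonal to the classes supported in $Y$) is negative definite, and by \cite{OO1} this sublattice is identified with a standard negative-definite diagonal lattice $\langle -1\rangle^{\oplus n}$, with a preferred set of orthogonal $(-1)$-classes.

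Finally I would argue that the pseudoholomorphic $(-1)$-curves in $X$ realize these orthogonal classes. By Proposition~\ref{prop:2.4} we may choose $J$ so that all the relevant curves (the compactifying divisor in $Y$ and any given $(-1)$-curves in $X$) are simultaneously pseudoholomorphic, and each symplectic $(-1)$-class then has a unique $J$-holomorphic representative. Two distinct irreducible pseudoholomorphic $(-1)$-curves $E_1,E_2\subset X$ would have $E_1\cdot E_2\geq 0$ by positivity of intersections, and since they are embedded spheres of square $-1$ with $E_i\cdot E_i=-1$, distinctness of the homology classes forces $[E_1]\neq[E_2]$; but the classification of the negative-definite sublattice from \cite{OO1} shows any two such $(-1)$-classes are either equal or have intersection product $\leq 0$, while positivity of intersections for distinct irreducible curves gives $E_1\cdot E_2\geq 0$, hence $E_1\cdot E_2=0$, i.e.\ the curves are disjoint. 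The main obstacle is the middle step: extracting from Theorem~1 of \cite{OO1} the precise identification of the $X$-part of the homology lattice of $Z$ with its distinguished orthogonal $(-1)$-basis, since everything else is a formal consequence of positivity of intersections together with this lattice-theoretic input. \qed
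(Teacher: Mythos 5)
There is a genuine gap, and it sits exactly where you flag it: the ``middle step''. Theorem~1 of \cite{OO1} gives negative definiteness of the intersection form of an arbitrary symplectic filling of the link of a simple singularity; it does not identify the $X$-part of $H_2(Z)$ with a diagonal lattice $\langle -1\rangle^{\oplus n}$ carrying a distinguished orthogonal basis of $(-1)$-classes, and such an identification is false in general: the minimal filling (the Milnor fibre) of the $E_8$-singularity has intersection form $-E_8$, which is not diagonal, and a non-minimal filling is a blow-up of the minimal one, so its form is again not of the shape you claim. Even granting diagonality, your lattice statement that two distinct $(-1)$-classes have pairing $\leq 0$ fails for a class and its negative ($e\cdot(-e)=+1$); ruling that case out requires the remark that both classes have positive symplectic area, which you never make. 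Likewise the assertion that two distinct $(-1)$-curves represent distinct classes is not automatic; it needs positivity of intersections (homologous distinct irreducible curves would give $E_1\cdot E_2=[E_1]^2=-1<0$). So as written the key step is both unsupported and, in the form stated, incorrect; since you yourself defer it, the proof is incomplete.

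Moreover, the entire compactification detour (gluing in a concave cap $Y$, proving rationality of $Z$, analysing the orthogonal complement of the $Y$-classes) is unnecessary. The paper argues entirely inside $X$: if two pseudoholomorphic $(-1)$-curves $E$ and $E'$ intersect, contract $E$ to obtain $\pi\colon X\to X'$; then $\pi_*[E']$ has self-intersection $-1+(E\cdot E')^2\geq 0$, while $X'$ is still a symplectic filling of $\partial X$, contradicting the negative definiteness of the intersection form of any such filling (Theorem~1 of \cite{OO1}). If you prefer to avoid the blow-down, the same negative definiteness applied to $H_2(X)$ together with the Cauchy--Schwarz inequality forces $E\cdot E'\in\{-1,0,1\}$ with $\pm 1$ only for proportional classes, and positivity of intersections plus positivity of symplectic area then yields $E\cdot E'=0$. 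Either route replaces your unresolved lattice-classification step by the single input actually provided by \cite{OO1}.
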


\begin{proof} Suppose that there are two pseudoholomorphic $(-1)$-curves $E$
and $E'$ which intersect each other. Contract $E$ to get a
symplectic manifold $\pi\colon X \to X'$. Then the homology class
$\pi_*[E']$ has nonnegative self-intersection number.  Note that
$X'$ is also a symplectic filling of $\partial X$. This contradicts
the fact that any symplectic filling of the link of a simple
singularity has negative definite intersection form (Theorem~1 in
\cite{OO1}). \qed
\end{proof}

\begin{remark} \label{rem:2.12} In Lemma~\ref{lem:2.5}, Lemma~\ref{lem:2.6}
and Lemma~\ref{lem:2.7}, we showed the non-existence of higher genus
pseudo\-holomorphic curves in the complement of a certain divisor
$D$.  These arguments also imply the non-existence of a cycle of
pseudo\-holomorphic spheres in the complement of $D$.  Indeed, a
cycle of pseudoholomorphic spheres is a stable map of genus $1$. By
gluing the adjacent components around the nodes we get an
irreducible symplectically embedded surface of genus 1 which is
pseudoholomorphic with respect to a compatible almost complex
structure which coincides the original almost complex structure
outside of a neighbourhood of the nodes. But Lemma~\ref{lem:2.5},
Lemma~\ref{lem:2.6} and Lemma~\ref{lem:2.7} prohibit the existence
of such a pseudoholomorphic curve of genus $1$.
\end{remark}

\begin{remark}
In \cite{OO2}, we used the fact that the canonical bundles of the
minimal symplectic fillings of the links of simple singularities are
trivial. In the case of types $E_6,E_7$ and $E_8$, we used K3
surfaces to find an appropriate compactification.  This argument is
also applied to cases of types $A_n$ and $D_n$ with $n$ small. For
general $A_n$ and $D_n$, this fact was shown by Kanda. Here we note
that in the case of $A_n$ and $D_n$, the compactification contains a
pseudoholomorphic rational curve $A$ of nonnegative
self-intersection number. Hence Theorem~\ref{thm:McDuff} due to
McDuff implies that the compactification is a rational or ruled
symplectic 4-manifold.  If it is ruled, the existence of a
pseudoholomorphic cuspidal rational curve implies that it is not
irrationally ruled. Hence the compactification is a rational
symplectic 4-manifold. Since the pseudoholomorphic cuspidal curve is
singular, Proposition~\ref{prop:2.4} guarantees an almost complex
structure with respect to which the cuspidal curve and a maximal
family of $(-1)$-curves away from $A$ are pseudoholomorphic. By
Lemma~4.1 in \cite{OO2}, the cuspidal curve becomes
pseudoholomorphic after blowing down. Since we know the final
picture, we can conclude that the canonical bundle of the complement
of the compactifying divisor is trivial.
\end{remark}


\section{Quotient singularities}
\label{sec:3} We consider germs of quotient singularities
$(\C^2/G,0)$, where $G$ is a finite subgroup of $Gl(2,\C)$. It is
known that every such quotient singularity is isomorphic to the
quotient of $\C^2$ by a small group $G<Gl(2,\C)$, where ``small''
means that $G$ does not contain any reflections. Also, it is known
that, for small groups $G_1,G_2$, the singularity $(\C^2/G_1,0)$ is
analytically isomorphic to $(\C^2/G_2,0)$ if and only if $G_1$ is
conjugate to $G_2$. Hence the problem of classifying quotient
singularities $(\C^2/G,0)$ is reduced to the problem of classifying
small subgroups of $Gl(2,\C)$ up to conjugation. Since $G$ is
finite, we may assume that $G \subset U(2)$. The action of $G$ on
$\C^2$ lifts to an action on the blow-up of $\C^2$ at the origin:
$p\colon \widehat{\C}^2 \to \C^2$. The exceptional divisor $E$ is
stable under the $G$ action which is induced by $G \subset U(2) \to
PU(2) \cong SO(3)$. The image of $G$ in $SO(3)$ is (conjugate to)
either a cyclic subgroup, a dihedral subgroup, the tetrahedral
subgroup, the octahedral subgroup or the icosahedral subgroup. The
quotient space $\widehat{\C}^2/G$ has isolated singularities at
$p(E^G)$.  Each of them is a cyclic quotient singularity. The end
result of this is in \cite{Br}. Briefly, quotient singularities can
be divided into five families, namely: cyclic quotient
singularities, dihedral singularities, tetrahedral singularities,
octahedral singularities and icosahedral singularities. Presently,
we describe the possible minimal resolutions that occur for quotient
singularities together with compactifying divisors which are
convenient from our point of view. The latter can be obtained by the
method of McCarthy and Wolfson \cite{MW}.

\subsubsection*{{\it 1. Cyclic quotient singularities, $A_{n,q}$, where
$0<q<n$ and $(n,q)=1$}} It is well-known that the minimal resolution
of $A_{n,q}$ is given by
\begin{center}
\epsfig{file=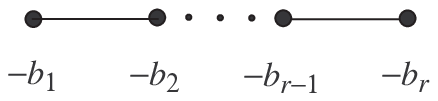}
\end{center}
where the $b_i$ are defined by the Hirzebruch--Jung continued
fraction:
$$ \frac nq = [b_1,b_2,\ldots,b_r] = b_1-
\cfrac{1}{b_2- \cfrac{1}{\ddots- \cfrac{1}{b_r}}}, \quad b_i\geq 2
\text{ for all $i$}. $$ It is not difficult to check that the
following configuration of curves gives a compactifying divisor for
$A_{n,q}$.
\begin{center}
\epsfig{file=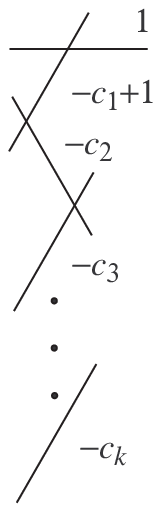}
\end{center}
where the $c_i$ are given by
$$ \frac {n}{n-q}=[c_1,c_2,\ldots,c_k], \quad c_i\geq 2,\text{ for all $i$}.$$

\subsubsection*{{\it 2. Dihedral singularities, $D_{n,q}$, where $1<q<n$
and $(n,q)=1$}} The minimal resolution is given by
\begin{center}
\epsfig{file=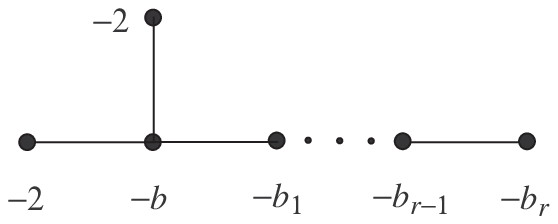}
\end{center}
where $b,b_i,i=1,\ldots,r$ are defined by
$$ \frac nq = [b,b_1,\ldots,b_r],\quad b\geq2,\ b_i\geq 2,\text{ for all $i$}. $$
In this case, one can check that a compactifying divisor is given by
\begin{center}
\epsfig{file=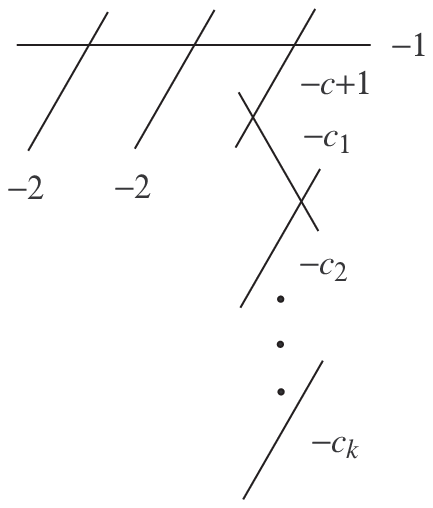}
\end{center}
where $c,c_i,i=1,\ldots,k$ are given by
$$ \frac {n}{n-q} = [c,c_1,\ldots,c_k],\quad c\geq2,\ c_i\geq 2,\text{ for all $i$}. $$

\subsubsection*{{\it 3. Tetrahedral singularities, $T_m$, where $m=1,3,5
\mod 6$}} The dual resolution graphs and compactifying divisors are
given in Table~\ref{tab:tet}.

\begin{table}
\caption{Dual resolution graphs and compactifying divisors for
tetrahedral singularities.} \label{tab:tet}
\begin{tabular}{|c|c|c|}
\hline
& Dual resolution graph & Compactifying divisor \\
\hline $m=6(b-2)+1$ &
{\begin{tabular}{c}\hspace{-.5cm}\epsfig{file=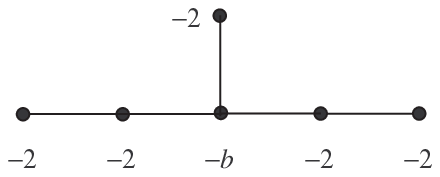}\end{tabular}}
\hspace{-.5cm} &
{\begin{tabular}{c}\hspace{-.5cm}\epsfig{file=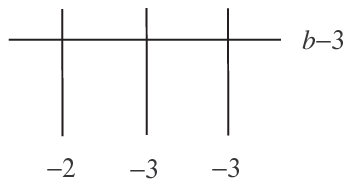}\end{tabular}}
\hspace{-.6cm} \\
\hline $m=6(b-2)+3$ &
{\begin{tabular}{c}\hspace{-.5cm}\epsfig{file=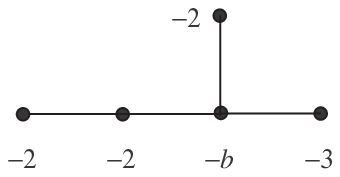}\end{tabular}}
\hspace{-.5cm} &
{\begin{tabular}{c}\hspace{-.5cm}\epsfig{file=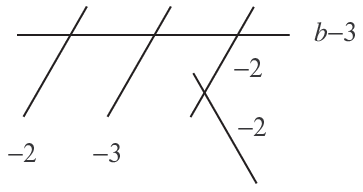}\end{tabular}}
\hspace{-.6cm} \\
\hline $m=6(b-2)+5$ &
{\begin{tabular}{c}\hspace{-.5cm}\epsfig{file=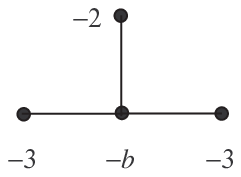}\end{tabular}}
\hspace{-.5cm} &
{\begin{tabular}{c}\hspace{-.5cm}\epsfig{file=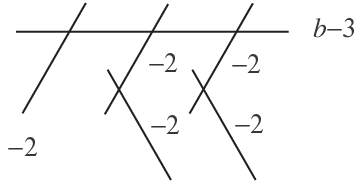}\end{tabular}}
\hspace{-.6cm} \\
\hline
\end{tabular}
\end{table}

\subsubsection*{{\it 4. Octahedral singularities, $O_m$, where
$(m,6)=1$}} The dual resolution graphs and compactifying divisors
are given in Table~\ref{tab:oct}.

\begin{table}
\caption{Dual resolution graphs and compactifying divisors for
octahedral singularities.} \label{tab:oct}
\begin{tabular}{|c|c|c|}
\hline
& Dual resolution graph & Compactifying divisor \\
\hline $m=12(b-2)+1$ &
{\begin{tabular}{c}\hspace{-.5cm}\epsfig{file=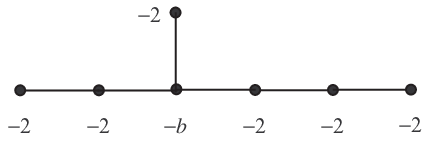}\end{tabular}}
\hspace{-.5cm} &
{\begin{tabular}{c}\hspace{-.5cm}\epsfig{file=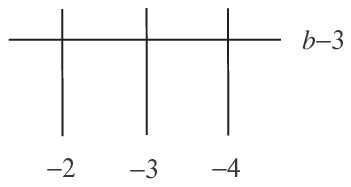}\end{tabular}}
\hspace{-.6cm} \\
\hline $m=12(b-2)+5$ &
{\begin{tabular}{c}\hspace{-.5cm}\epsfig{file=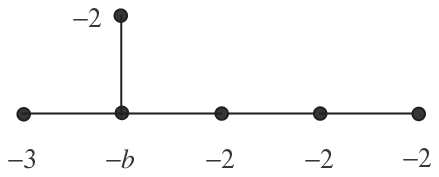}\end{tabular}}
\hspace{-.5cm} &
{\begin{tabular}{c}\hspace{-.5cm}\epsfig{file=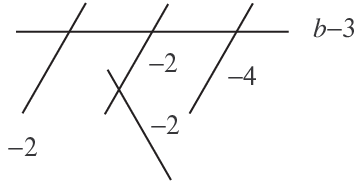}\end{tabular}}
\hspace{-.6cm} \\
\hline $m=12(b-2)+7$ &
{\begin{tabular}{c}\hspace{-.5cm}\epsfig{file=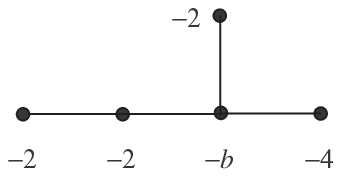}\end{tabular}}
\hspace{-.5cm} &
{\begin{tabular}{c}\hspace{-.5cm}\epsfig{file=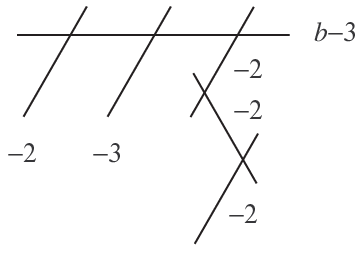}\end{tabular}}
\hspace{-.6cm} \\
\hline $m=12(b-2)+11$ &
{\begin{tabular}{c}\hspace{-.5cm}\epsfig{file=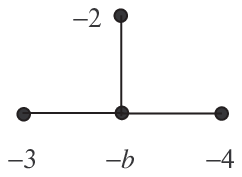}\end{tabular}}
\hspace{-.5cm} &
{\begin{tabular}{c}\hspace{-.5cm}\epsfig{file=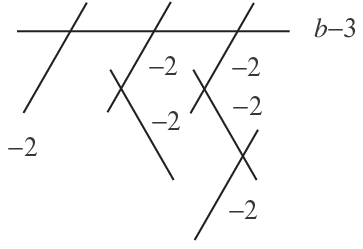}\end{tabular}}
\hspace{-.6cm} \\
\hline
\end{tabular}
\end{table}

\subsubsection*{{\it 5. Icosahedral singularities, $I_m$, where
$(m,30)=1$}} The dual resolution graphs and compactifying divisors
are given in Tables~\ref{tab:icos} and \ref{tab:icos2}.

\begin{table}
\caption{Dual resolution graphs and compactifying divisors for
icosahedral singularities.} \label{tab:icos}
\begin{tabular}{|c|c|c|}
\hline
& Dual resolution graph & Compactifying divisor \\
\hline $m=30(b-2)+1$ &
{\begin{tabular}{c}\hspace{-.5cm}\epsfig{file=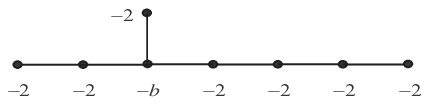}\end{tabular}}
\hspace{-.5cm} &
{\begin{tabular}{c}\hspace{-.5cm}\epsfig{file=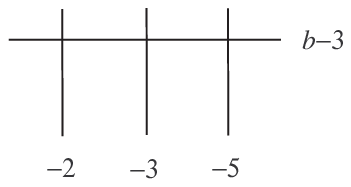}\end{tabular}}
\hspace{-.6cm} \\
\hline $m=30(b-2)+7$ &
{\begin{tabular}{c}\hspace{-.5cm}\epsfig{file=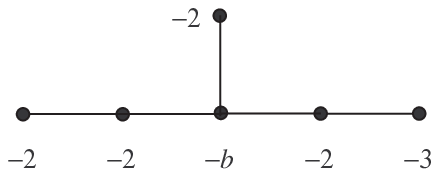}\end{tabular}}
\hspace{-.5cm} &
{\begin{tabular}{c}\hspace{-.5cm}\epsfig{file=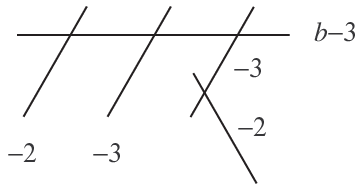}\end{tabular}}
\hspace{-.6cm} \\
\hline $m=30(b-2)+11$ &
{\begin{tabular}{c}\hspace{-.5cm}\epsfig{file=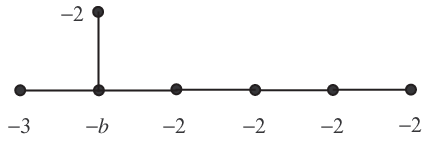}\end{tabular}}
\hspace{-.5cm} &
{\begin{tabular}{c}\hspace{-.5cm}\epsfig{file=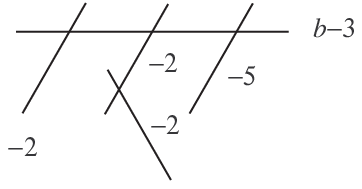}\end{tabular}}
\hspace{-.6cm} \\
\hline $m=30(b-2)+13$ &
{\begin{tabular}{c}\hspace{-.5cm}\epsfig{file=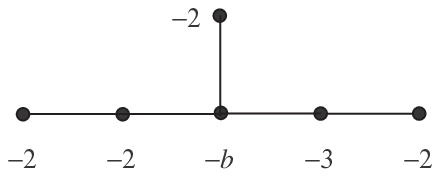}\end{tabular}}
\hspace{-.5cm} &
{\begin{tabular}{c}\hspace{-.5cm}\epsfig{file=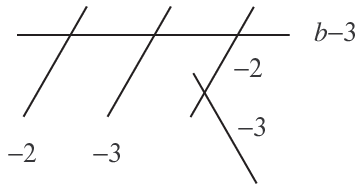}\end{tabular}}
\hspace{-.6cm} \\
\hline $m=30(b-2)+17$ &
{\begin{tabular}{c}\hspace{-.5cm}\epsfig{file=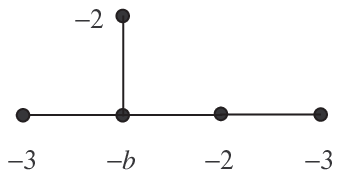}\end{tabular}}
\hspace{-.5cm} &
{\begin{tabular}{c}\hspace{-.5cm}\epsfig{file=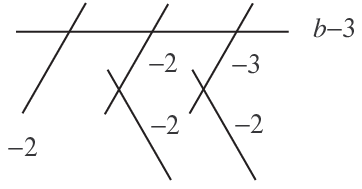}\end{tabular}}
\hspace{-.6cm} \\
\hline $m=30(b-2)+19$ &
{\begin{tabular}{c}\hspace{-.5cm}\epsfig{file=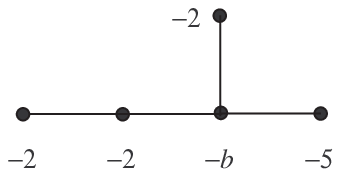}\end{tabular}}
\hspace{-.5cm} &
{\begin{tabular}{c}\hspace{-.5cm}\epsfig{file=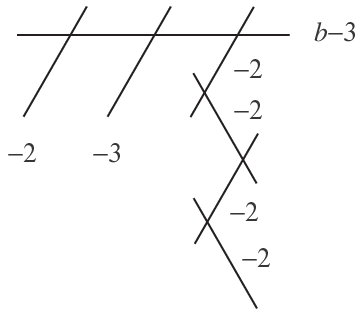}\end{tabular}}
\hspace{-.6cm} \\
\hline
\end{tabular}
\end{table}

\begin{table}
\caption{Dual resolution graphs and compactifying divisors for
icosahedral singularities continued.} \label{tab:icos2}
\begin{tabular}{|c|c|c|} \hline
& Dual resolution graph & Compactifying divisor \\
\hline $m=30(b-2)+23$ &
{\begin{tabular}{c}\hspace{-.5cm}\epsfig{file=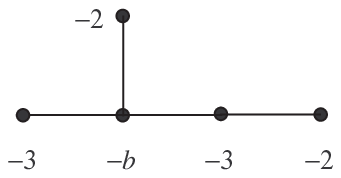}\end{tabular}}
\hspace{-.5cm} &
{\begin{tabular}{c}\hspace{-.5cm}\epsfig{file=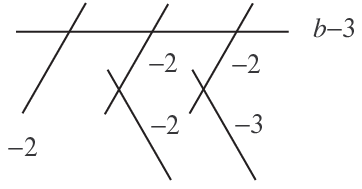}\end{tabular}}
\hspace{-.6cm} \\
\hline $m=30(b-2)+29$ &
{\begin{tabular}{c}\hspace{-.5cm}\epsfig{file=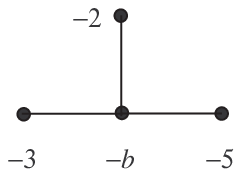}\end{tabular}}
\hspace{-.5cm} &
{\begin{tabular}{c}\hspace{-.5cm}\epsfig{file=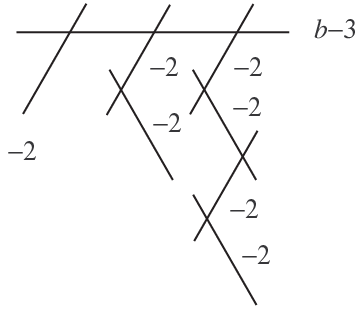}\end{tabular}}
\hspace{-.6cm} \\
\hline
\end{tabular}
\end{table}


\section{Compactification of  symplectic fillings}
\label{sec:4} Let $X$ be a symplectic filling of the link of a
quotient surface singularity.  Without loss of generality, we may
assume that $X$ is minimal, that is, $X$ does not contain any
symplectically embedded spheres of self-intersection number $-1$.
Denote by $Y$ a regular neighbourhood of the compactifying divisor
$K$ presented in Section~\ref{sec:3}. We may take $Y$ so that $Y$ is
a strong concave filling of $\partial Y \cong \partial X$ (see
\cite{Bhupal}, \cite{OO3}). Gluing $X$ and $Y$, we get a closed
symplectic manifold $Z$. The classification problem of symplectic
fillings reduces to the symplectic deformation classification of the
pair $(Z,K)$.

\subsection{Cyclic quotient singularities}
Let $L$, $C_1, \dots, C_k$ be a string of symplectically embedded
$2$-spheres in a closed symplectic $4$-manifold $M$ with $L \cdot L
=1$, $C_i \cdot C_i =v_i \ (i=1, \dots ,k)$. Note that $v_1 \leq 0$
and $v_i < 0 \ (i=2, \dots ,k)$. The main examples are the
compactifying divisors for cyclic quotient singularities given in
Section~\ref{sec:3}. Note also that $M$ is a blow-up of ${\C}P^2$ by
Theorem~\ref{thm:McDuff}.   Firstly, we show the following.

\begin{lemma} \label{lem:4.1} Let $J$ be a compatible (or tame)
almost complex structure for which $L$, $C_1 \dots ,C_k$ are
pseudoholomorphic. Then there exists at least one $J$-holomorphic
$(-1)$-curve in $M \setminus L$.
\end{lemma}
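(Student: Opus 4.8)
The plan is to argue by contradiction, exploiting the positivity of $L\cdot L=1$ together with McDuff's structure theorem. So suppose that $M\setminus L$ contains no $J$-holomorphic $(-1)$-curve. By Proposition~\ref{prop:2.4}, we may take $J$ generic among those compatible almost complex structures for which $L,C_1,\dots,C_k$ are pseudoholomorphic; then every symplectic $(-1)$-class has a $J$-holomorphic representative, and by assumption each such representative must meet $L$. Now blow down a maximal disjoint family of $J$-holomorphic $(-1)$-curves. By Lemma~\ref{lem:2.8} the $(-1)$-curves in $M\setminus L$ are mutually disjoint, but here we need to handle the ones hitting $L$: after perturbing (away from the $C_i$, using Lemma~4.1 of \cite{OO2} as quoted before Lemma~\ref{lem:2.5}), each blow-down keeps $L$ and the $C_i$ pseudoholomorphic and decreases $b_2$. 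Since $L\cdot L=1$, by Theorem~\ref{thm:McDuff} after blowing down all $(-1)$-curves away from $L$ the manifold becomes ${\C}P^2$ with $L$ a line. The point is that \emph{if} no $(-1)$-curve lies off $L$, then $M$ is already ${\C}P^2$ (no blow-ups are possible), and then $M\setminus L\cong{\C}^2$.

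The contradiction then comes from the curves $C_1,\dots,C_k$, which are contained in $M\setminus L\cong{\C}^2$ (a string attached to $L$, but with the negative-definite part disjoint from $L$; more precisely at least $C_2,\dots,C_k$ and also $C_1$ lie in the complement of $L$ after the appropriate adjustment, since $v_i<0$ for $i\geq2$). In ${\C}P^2$, any irreducible $J$-holomorphic curve has nonnegative self-intersection (its homology class is $d[\text{line}]$ with $d\geq1$, so self-intersection $d^2\geq1$). But $C_k$, say, has negative self-intersection $v_k<0$, and it is an irreducible pseudoholomorphic sphere disjoint from $L$, hence an irreducible pseudoholomorphic curve in ${\C}P^2$ — impossible. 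Thus the assumption fails and there must be at least one $J$-holomorphic $(-1)$-curve in $M\setminus L$.

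In carrying this out, the step that needs care is the bookkeeping of which blow-downs are permissible while keeping the whole configuration $L,C_1,\dots,C_k$ pseudoholomorphic: one must perturb $J$ so that the $(-1)$-curves miss the (transverse) intersection points among the $C_i$ and miss $L\cap C_1$, invoke Proposition~\ref{prop:2.4} to keep the configuration holomorphic under the new $J$, and then apply the blow-down lemma of \cite{OO2}. One also has to check that blowing down a $(-1)$-curve meeting $L$ does not create a sphere of self-intersection $\geq0$ disjoint from the image of $L$ prematurely — but this is exactly controlled by $L\cdot L$ staying equal to $1$ throughout and by Lemma~\ref{lem:2.8}. The heart of the matter, and the main obstacle, is simply establishing the dichotomy cleanly: either there is a $(-1)$-curve off $L$, or $M={\C}P^2$ and $M\setminus L$ is affine, the latter being incompatible with the presence of the negative-self-intersection spheres $C_i$.

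I expect the argument to occupy only a few lines once the blow-down machinery from Section~\ref{sec:2} is cited, since all the hard analytic input — existence and uniqueness of $(-1)$-curve representatives, behaviour under blow-down — has already been assembled in Proposition~\ref{prop:2.4}, Lemma~\ref{lem:2.8}, and the remarks preceding Lemma~\ref{lem:2.5}.
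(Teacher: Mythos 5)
There is a genuine gap, and it sits exactly at the point that carries the whole weight of the lemma. The statement is for the \emph{given} compatible almost complex structure $J$, which is only assumed to make $L,C_1,\dots,C_k$ pseudoholomorphic and is in general not generic; indeed, in the application (the proof of Proposition~\ref{prop:cycquot}) the lemma is invoked for blown-down manifolds whose almost complex structures one cannot assume to be generic. Your opening move ``by Proposition~\ref{prop:2.4} we may take $J$ generic in ${\cal J}_\cD$, so every symplectic $(-1)$-class has a $J$-holomorphic representative'' is therefore not available: replacing $J$ by a generic structure proves a strictly weaker statement, and in any case Proposition~\ref{prop:2.4} does not apply to this configuration, since its hypothesis requires each curve to be nondegenerate, singular, or of higher genus, whereas the $C_i$ are embedded spheres of self-intersection $\leq -2$ in general (degenerate, nonsingular, genus $0$). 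Consequently your key dichotomy --- either there is a $J$-holomorphic $(-1)$-curve off $L$, or $M={\C}P^2$ --- is not established: $M$ can be a nontrivial blow-up of ${\C}P^2$, so that symplectic $(-1)$-classes disjoint from $L$ exist homologically (this much follows from Theorem~\ref{thm:McDuff} and $v_1\leq 0$, and is the easy half), while none of these classes is represented by a $J$-holomorphic sphere for the particular, non-generic $J$ in the hypothesis. Ruling out that scenario is precisely the content of the lemma, and your argument never addresses it; the contradiction you extract inside ${\C}P^2$ from the negative curves is only needed for the trivial case and does not touch the real difficulty.

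The missing idea is a degeneration argument for the fixed $J$. The paper's proof takes a symplectic $(-1)$-class $[E]$ with $E\cdot L=0$; if it is not $J$-represented, it chooses compatible structures $J_n\to J$ for which only $L$ is required to be $J_n$-holomorphic (possible since $L$ is an embedded $+1$-sphere, hence nondegenerate) and $[E]$ is represented by $J_n$-spheres $E_n$, and then passes to the Gromov limit, a $J$-holomorphic stable map with components $A_1,\dots,A_l$. Since $c_1(M)[E]=1$, some component $A_j$ has $c_1(M)[A_j]>0$, and $E\cdot L=0$ forces $A_j$ to be disjoint from $L$; Lemma~\ref{lem:2.5} then shows $A_j$ (or its underlying reduced curve) is an embedded sphere, Lemma~\ref{lem:2.8} gives it negative self-intersection, and the adjunction formula together with $c_1(M)[A_j]>0$ makes it a $J$-holomorphic $(-1)$-curve in $M\setminus L$, for the original $J$. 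If you want to salvage your write-up, this compactness step must replace the appeal to Proposition~\ref{prop:2.4}; the bookkeeping about blow-downs keeping the configuration holomorphic, which you flag as the delicate point, is not actually needed for this lemma.
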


\begin{proof} If one of the $C_i \ (i\geq 2)$ is a symplectic
$(-1)$-curve, there is nothing to prove. Suppose that none of the
$C_i \ (i\geq 2)$ are symplectic $(-1)$-curves. Since $v_1 \leq 0$,
$M$ is not minimal. So there are symplectic $(-1)$-curves in $M
\setminus L$. After blowing them down we get the complex projective
plane. Denote by $E$ a symplectic $(-1)$-curve in $M \setminus L$.
If the homology class $[E]$ is represented by a $J$-holomorphic
sphere, we are done. Suppose that $[E]$ is not represented by a
$J$-holomorphic sphere. Pick a sequence of compatible almost complex
structures $J_n$ such that $L$ is $J_n$-holomorphic, $[E]$ is
represented by a $J_n$-holomorphic sphere $E_n$ and $\{J_n\}$
converges to $J$. Then $E_n$ converges to the image of a $J$-stable
map. Let $A_1, \dots ,A_l$ be its irreducible components. Since
$c_1(M)[E]=1$, there is a component $A_j$ with $c_1(M)[A_j]>0$. Note
that $A_j$ is disjoint from $L$. (Otherwise, $E \cdot L$ must be
positive.) If $A_j$ is a multiply covered component, take the
underlying reduced curve $E'$. Then Lemma~\ref{lem:2.5} implies that
$A_j$ or $E'$ is an embedded pseudoholomorphic sphere. Also,
Lemma~\ref{lem:2.8} implies that $A_j$ or $E'$ has self-intersection
$<0$. Since $c_1(M)[A_j]
>0$, by the adjunction formula, $A_j$ or $E'$ is a $J$-holomorphic $(-1)$-curve.
\qed
\end{proof}

Now we study the compactifying divisors $K$ of cyclic quotient
surface singularities. In fact, by successive blowing down, the
compactifying divisor is reduced to two complex projective lines in
the complex projective plane as Lisca claimed in \cite{L}. Here we
give a proof based on Lemma~\ref{lem:4.1} for the sake of
completeness.

We call a configuration $\cD=L \cup C_1 \cup \dots \cup C_k$ of
rational curves {\em admissible} (for symplectic fillings of links
of cyclic quotient singularities) if it is the total transform of
two distinct lines in ${\C}P^2$ under some iterated blow-up. Suppose
that $M \setminus (L \cup C_1 \cup \dots \cup C_k)$ is minimal.
Denote by ${\cal J}_\cD$ the set of compatible almost complex
structures with respect to which $\cD=L \cup C_1 \cup \dots \cup
C_k$ is pseudoholomorphic. We will blow down a maximal family
$\{E_i\}$ of pseudoholomorphic $(-1)$-curves in $M \setminus L$ to
reduce the configuration of rational curves to an admissible
configuration. Note that these $(-1)$-curves are mutually disjoint
(Lemma~\ref{lem:2.8}).

\begin{proposition} \label{prop:cycquot}
Let $J$ be a compatible almost complex structure, which is generic
in ${\cal J}_\cD$. Denote by $M'$ the symplectic $4$-manifold
obtained by blowing down all $J$-holomorphic $(-1)$-curves $\{E_j\}$
away from $L$ and by $C_i'$ the  image of $C_i$. Then $\{L, C_i'\}$
is an admissible configuration for a symplectic filling of a link of
cyclic quotient singularity.
\end{proposition}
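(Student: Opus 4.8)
The plan is to argue by induction on the number of blow-downs, i.e.\ on the second Betti number of $M$, starting from the base case $M'={\C}P^2$. First I would invoke Lemma~\ref{lem:4.1}: since $J$ is generic in ${\cal J}_\cD$, there is at least one $J$-holomorphic $(-1)$-curve $E$ in $M\setminus L$, and by genericity (Proposition~\ref{prop:2.4}) we may assume every symplectic $(-1)$-class in $M\setminus L$ is represented by such a curve, these being mutually disjoint by Lemma~\ref{lem:2.8}. Blowing down a single such $E$ yields a symplectic $4$-manifold $M_1$ with $b_2(M_1)=b_2(M)-1$, still containing the image $L$ of self-intersection $1$ and the images of the $C_i$. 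The crux is that the image configuration still consists of a string of symplectically embedded spheres $L\cup C_1^{(1)}\cup\cdots\cup C_k^{(1)}$ with the same sign conditions ($v_1^{(1)}\le 0$, $v_i^{(1)}<0$ for $i\ge 2$), and that $M_1\setminus(L\cup\bigcup C_i^{(1)})$ is again minimal, so the inductive hypothesis applies.

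The key steps, in order: (1) Record that $E\cdot L=0$ and that $E$ meets the string $C_1\cup\cdots\cup C_k$ in a controlled way — because the configuration is a chain of spheres and $E$ is an embedded $(-1)$-sphere disjoint from $L$, intersection positivity plus the adjunction/genus formula force $E$ to meet the chain in total intersection number at most $2$, and in fact to meet it so that the blow-down keeps each $C_i^{(1)}$ embedded and keeps the chain a chain (the self-intersection of a $C_i$ goes up by the square of its local intersection multiplicity with $E$, and two distinct $C_i$'s can get connected only through $E$, recreating a chain). (2) Check that after blowing down $E$ the manifold $M_1\setminus(L\cup\bigcup C_i^{(1)})$ is still minimal: a symplectic $(-1)$-curve appearing in the complement downstairs would, by genericity of $J$, be $J$-holomorphic, hence lift to the complement upstairs (possibly as the proper transform or with $E$ as a component), contradicting minimality of $M\setminus(L\cup\bigcup C_i)$ or the maximality/disjointness from Lemma~\ref{lem:2.8}. (3) Iterate: each blow-down strictly decreases $b_2$, so after finitely many steps we reach $M'$ with no $J$-holomorphic $(-1)$-curves in $M'\setminus L$; by Lemma~\ref{lem:4.1} (contrapositive) this forces $M'\setminus L$ to be minimal with no $(-1)$-spheres at all, and since $M$ (hence $M'$) is a blow-up of ${\C}P^2$ by Theorem~\ref{thm:McDuff}, $M'={\C}P^2$ and $L$ is a line. (4) Finally, in ${\C}P^2$ the chain $L\cup C_1'\cup\cdots\cup C_k'$ of embedded spheres with $L$ a line and the remaining self-intersections nonpositive/negative must be a configuration of lines (the only embedded rational curves of nonnegative square meeting a line transversally being lines and conics, and the negativity forces the rest to be the total transform pattern); reading the process backwards, $L\cup C_1\cup\cdots\cup C_k$ is the total transform of $L\cup C_1'$ — two distinct lines — under the iterated blow-up $M\to M'$, which is exactly admissibility.

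I expect the main obstacle to be step (1): controlling exactly how the blow-down of $E$ affects the \emph{shape} of the configuration, i.e.\ proving that a chain of spheres blows down to a chain of spheres with the correct sign pattern, rather than to something with a triple point, a non-embedded component, or a sphere of the wrong self-intersection. This requires a careful local analysis of the possible intersection patterns of the $(-1)$-curve $E$ with the chain — using positivity of intersections, the adjunction formula to bound $E\cdot(\bigcup C_i)$, and the chain structure (each $C_i$ meets only $C_{i-1}$ and $C_{i+1}$, and $C_1$ meets $L$) to rule out the bad cases — and is precisely where one must use that we started from an honest Hirzebruch--Jung string. The minimality-preservation in step (2) is the second delicate point, but it follows the same template as the proofs of Lemma~\ref{lem:2.6} and Lemma~\ref{lem:2.9} (push curves down, use genericity to keep everything $J$-holomorphic, derive a contradiction with minimality upstairs).
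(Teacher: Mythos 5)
There are two genuine problems with your argument. First, the crucial geometric step --- controlling how a contracted $(-1)$-curve meets the string --- is both left open and misstated. What is actually needed (and what the paper proves) is that each blown-down curve $E_j$ meets $C_1\cup\dots\cup C_k$ in at most one point, transversally; your sketch instead allows total intersection number $2$ and claims the images stay embedded and the chain stays a chain. That is false: if $E_j\cdot C_i=2$ the image of $C_i$ becomes a nodal rational curve, and if $E_j$ meets two different components the blow-down creates a cycle of spheres, not a chain. Positivity of intersections and adjunction, which you propose as the tools, do not rule these out; the paper excludes them by Lemma~\ref{lem:2.5} (an irreducible singular pseudoholomorphic curve must satisfy $C\cdot L\geq 3$, while the image here would meet $L$ at most once) together with Remark~\ref{rem:2.12} (no cycle of pseudoholomorphic spheres in the complement of $L$). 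You correctly flag this as the main obstacle, but you neither supply the argument nor the lemmas that make it work.

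Second, your endgame is structurally wrong. You only ever contract $(-1)$-curves that are \emph{not} components of the configuration, yet claim the iteration terminates with $M'=\C P^2$ still containing the whole chain $L\cup C_1'\cup\dots\cup C_k'$ with the $C_i'$ of negative square --- impossible in $\C P^2$, and in any case Lemma~\ref{lem:4.1} produces a $J$-holomorphic $(-1)$-curve in the complement of $L$ at every stage as long as the string is nonempty, so your terminal condition is never reached. The point of the paper's proof is different: after contracting the maximal family $\{E_j\}$, any remaining $(-1)$-curve in $M'\setminus L$ must be one of the $C_i'$ (a new one would, after a small perturbation, miss the blown-down points and lift to $M\setminus L$, contradicting maximality), so one continues by contracting \emph{string components} one after another until only $L$ and a line $L'$ (the image of $C_1$) remain in $\C P^2$. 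It is precisely because every curve contracted from $M'$ onward is a component of the configuration that $\{L,C_i'\}$ is the total transform of two lines, i.e.\ admissible. Your ``reading the process backwards'' instead refers to the blow-up $M\to M'$, whose total transform would also contain the $E_j$, which are not part of the configuration; so even granting your iteration, it would not establish admissibility of $\{L,C_i'\}$ in $M'$ as the proposition requires.
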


\begin{proof} Firstly, we note that any pseudoholomorphic
$(-1)$-curve in $M'\setminus L$ is one of the $C_i' \ (i\geq 2)$.
Indeed, assume to the contrary that there is a pseudoholomorphic
$(-1)$-curve $E$ in $M'\setminus L$ which is not one of the $C_i'$.
Perturbing the almost complex structure slightly, we may assume that
$E$ does not pass through the images of the blown-down $(-1)$-curves
$E_j$. Hence we may assume that $E$ is actually a pseudoholomorphic
$(-1)$-curve already in $M\setminus L$ which contradicts the
maximality of $\{E_j\}$.

Next, we note that each $C_i'$ is embedded. It is enough to show
that each  $E_j$ intersects exactly one of $C_i$ with $E_j \cdot C_i
= 1$. Perturbing the almost complex structure away from $\cD$, we
may assume that $E_j$ intersects any $C_i$ transversally. Suppose
that $E_j$ intersects $C_i$ such that $E_j \cdot C_i \geq 2$. After
contracting $E_j$, $C_i$ becomes a nodal curve, which is singular.
The existence of such a curve is prohibited by Lemma~\ref{lem:2.5}.
Similarly, Lemma~\ref{lem:2.5} and Remark~\ref{rem:2.12} exclude the
possibility that $E_j$ intersects at least two of the $C_i$'s.

Thus Lemma~\ref{lem:4.1} implies that one of the $C_i'$ is a
symplectic $(-1)$-curve. After blowing it down, we get a new
configuration of rational curves $L \cup C_1'' \cup \dots \cup
C_{k-1}''$. Let $M''$ denote the resulting ambient symplectic
4-manifold.

We claim that there are no symplectic $(-1)$-curves in $M''
\setminus L$ except for some of the $C_i''$. Suppose that $E$ is
such a pseudoholomorphic $(-1)$-curve. A similar argument as above
shows that $E$ can be lifted to a pseudoholomorphic $(-1)$-curve in
$M\setminus L$. This contradicts the maximality of $\{E_j\}$.

Continuing this process, we can successively blow down $(-1)$-curves
to get the complex projective plane and $C_1$ transformed to a
complex projective line $L'$ transversal to $L$. The other $C_i$ are
contracted to a point on $L'$. \qed
\end{proof}

\subsection{Dihedral singularities}
Let $\widetilde M$ be a closed symplectic 4-manifold containing a
configuration of symplectically embedded 2-spheres intersecting in
the manner shown in the first picture in Figure~\ref{fig:dihedbl}.
Here $c,c_i\geq 2, i=1,\ldots, k$. By a sequence of blow-downs and a
blow-up, as in \cite{OO2} in the simple dihedral case, we can
transform this configuration into a configuration containing a cusp
curve (see Figure~\ref{fig:dihedbl}). Let $M$ denote the resulting
ambient manifold.

\begin{figure}
\begin{center}
\includegraphics[angle=90]{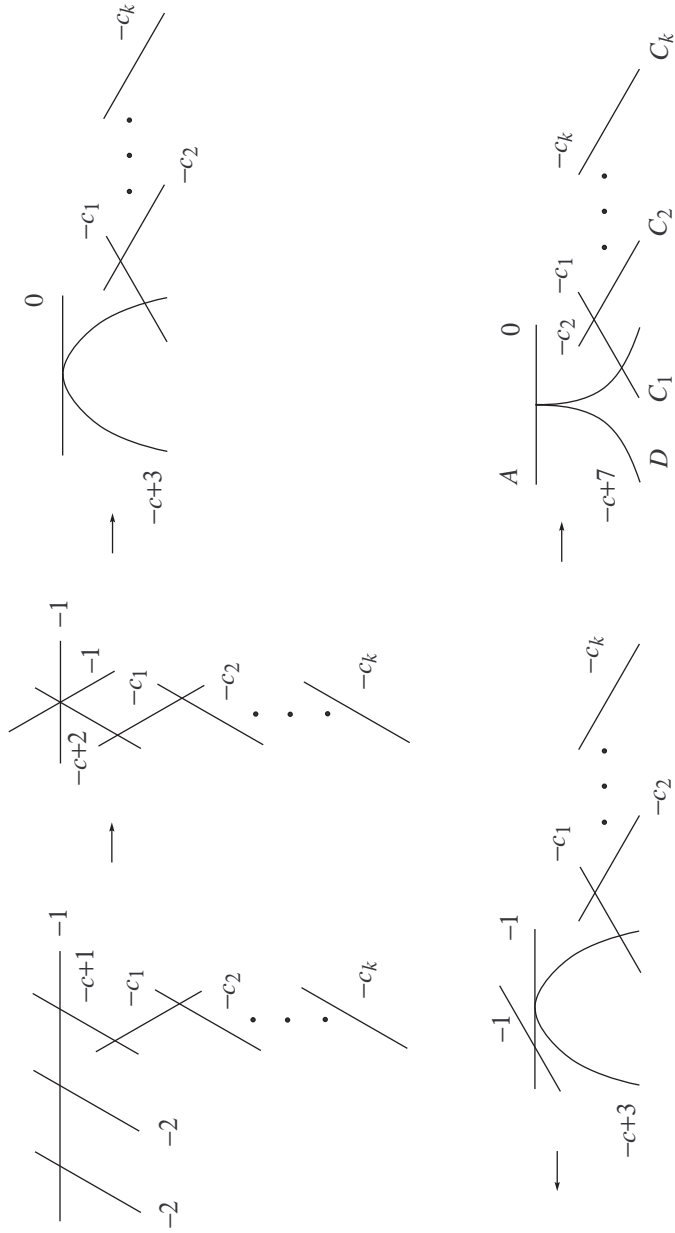}
\end{center}
\caption{Sequence of blow-downs and a blow-up transforming the
compactifying divisor of a dihedral singularity into a configuration
containing a cusp curve} \label{fig:dihedbl}
\end{figure}

To obtain a classification of fillings of links of dihedral
singularities we proceed via a process of blowing down symplectic
$(-1)$-curves in $M$. Using the results of \cite{OO2}, we know that
the complement of a regular neighbourhood of the union of the
0-curve $A$ and the cusp curve $D$, in the final picture in
Figure~\ref{fig:dihedbl}, gives a symplectic filling of the link of
the simple dihedral singularity $D_{c+1}$. When $c=2$, $D_3=A_3$ and
the argument for simple dihedral singularities also works for the
$A_3$-singularity. It follows that $M$, and hence $\widetilde M$, is
in fact rational.

Consider now, generally, a closed symplectic 4-manifold $Z$
containing a configuration of rational curves $\cD=A\cup D\cup
C_1\cup\cdots\cup C_k$ as depicted in Figure~\ref{fig:dihedconfig}.
Here $D$ is a singular curve with a $(2,3)$-cusp, $A$ is an embedded
$0$-curve intersecting $D$ at the cusp point and $C_1,\ldots C_k$
are embedded curves. By \cite{OO3}, $D\cdot D\leq 8$. Also, by
Lemma~\ref{lem:2.9} and Lemma~\ref{lem:2.10}, $C_i\cdot C_i\leq -1$
for all $i$.

\begin{figure}
\begin{center} \epsfig{file=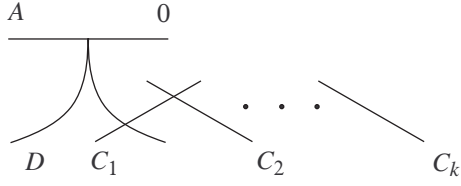} \end{center}
\caption{General configuration of rational curves considered in case
of symplectic fillings of links of dihedral singularities}
\label{fig:dihedconfig}
\end{figure}

\begin{lemma} \label{lem:dihed} Assume that the string $C_1,\ldots,C_k$
is nonempty. Let $J$ be a compatible almost complex structure for
which $A,D,C_1,\ldots,C_k$ are pseudoholomorphic. Then $C_1$ is a
$(-1)$-curve or there exists a $J$-holomorphic $(-1)$-curve in
$Z\setminus (A\cup D)$.
\end{lemma}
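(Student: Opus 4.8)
The plan is to mimic the proof of Proposition~\ref{prop:cycquot}/Lemma~\ref{lem:4.1}, but now working in the complement of the divisor $A \cup D$ rather than in the complement of a single $(+1)$-sphere $L$. The role played there by Lemma~\ref{lem:2.8} (no nonnegative sphere, and $(-1)$-curves disjoint, in $M \setminus L$) is now played by Lemma~\ref{lem:2.9} and Lemma~\ref{lem:2.10}: since $D$ is an irreducible singular pseudoholomorphic curve, no symplectically embedded sphere of nonnegative self-intersection sits in $Z \setminus D$, and in particular none in $Z \setminus (A \cup D)$; moreover any embedded sphere $B$ with $B \cdot D \le 0$ cannot have $B \cdot B \ge 0$. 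First I would observe that $Z \setminus (A \cup D)$ is not minimal: indeed $C_1 \cdot C_1 \le -1$ while $C_1$ meets only $C_2$ (and $A$ in the $D_{n,q}$ configuration — but in the reduced ``general'' configuration of Figure~\ref{fig:dihedconfig}, $C_1$ is attached to $D$), so if $C_1$ were not already a $(-1)$-curve then $C_1 \cdot C_1 \le -2$ and the string $C_1,\dots,C_k$ forces non-minimality of $Z$; alternatively, rationality of $Z$ (established just above via the simple dihedral case) together with $b_2^-(Z \setminus (A \cup D))$ being large forces symplectic $(-1)$-classes, necessarily disjoint from $A$ and $D$ by positivity of intersection with the pseudoholomorphic curves $A$ and $D$ and Lemma~\ref{lem:2.10}.

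So fix a symplectic $(-1)$-class $e$ with a representative disjoint from $A \cup D$. If $e$ has a $J$-holomorphic representative we take it and, after a small perturbation of $J$ away from $\cD$, may assume it misses the cusp and is one of the $C_i$ or a new $(-1)$-curve in $Z \setminus(A\cup D)$; in the latter case we are done, and in the former case a standard argument (as in the proof of Proposition~\ref{prop:cycquot}) shows it must be $C_1$ — a $(-1)$-curve at the end of the string whose other neighbor, if any, has its self-intersection raised by the blow-down, and one checks using Lemma~\ref{lem:2.9}, Lemma~\ref{lem:2.10} and Lemma~\ref{lem:dihed}'s hypothesis that $C_1$ is the only candidate. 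If instead $e$ has no $J$-holomorphic representative, I run the Gromov-compactness/bubbling argument of Lemma~\ref{lem:4.1}: pick $J_n \to J$ with $A, D$ ($J_n$-holomorphic and) $e$ represented by an embedded $J_n$-sphere $E_n$; then $E_n$ converges to a $J$-stable map with components $A_1,\dots,A_l$, and since $c_1(Z)[e]=1$ some component $A_j$ has $c_1(Z)[A_j] > 0$. This $A_j$ is disjoint from $A$ and from $D$: it cannot meet $A$ (else $e \cdot A > 0$, contradicting $E_n$ disjoint from $A$), and it cannot meet $D$ positively for the same reason — here one also invokes Lemma~\ref{lem:2.7} and Lemma~\ref{lem:2.10} to rule out a singular or higher-genus component meeting $D$ once, since $D \cdot D \ge 2$ in the relevant configurations (the $D\cdot D = 1$ case being handled separately, as remarked after Lemma~\ref{lem:2.7}).

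Having isolated $A_j$ in $Z \setminus (A \cup D)$ with $c_1(Z)[A_j] > 0$, I finish exactly as in Lemma~\ref{lem:4.1}: if $A_j$ is multiply covered, pass to the underlying reduced curve $E'$; by Lemma~\ref{lem:2.5} (applied after blowing down to reduce to the $+1$-sphere case, or directly) $A_j$ or $E'$ is an embedded pseudoholomorphic sphere, by Lemma~\ref{lem:2.9} it has negative self-intersection, and since $c_1 > 0$ on it the adjunction formula forces it to be a $(-1)$-curve, which lies in $Z \setminus (A \cup D)$. This produces the desired $J$-holomorphic $(-1)$-curve. The main obstacle I anticipate is the bookkeeping in the bubbling step: ensuring that \emph{no} component of the limit stable map meets $D$, and controlling the possibility that a bubble component is a cover of $A$ itself or sits in a class with $A \cdot (\cdot) = 0$ but nonzero $D$-intersection; this requires careful use of positivity of intersections together with Lemma~\ref{lem:2.7}, Lemma~\ref{lem:2.9}, Lemma~\ref{lem:2.10} and Remark~\ref{rem:2.12} (to exclude cycles of spheres), paralleling — but somewhat more delicately than — the single-sphere argument of Lemma~\ref{lem:4.1}.
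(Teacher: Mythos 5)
Your second and third paragraphs (the Gromov compactness argument in the case where $Z\setminus(A\cup D)$ is \emph{not} minimal) follow the paper's proof closely and are fine in outline: pick a symplectic $(-1)$-class represented in the complement, take $J_n\to J$, pass to the limit stable map, note no component coincides with (or covers) $D$ and hence none meets $D$ or $A$, extract a component with $c_1>0$, pass to the underlying simple curve, and use Lemma~\ref{lem:2.6}, Lemma~\ref{lem:2.9} and adjunction to get a $J$-holomorphic $(-1)$-curve. However, there is a genuine gap at the very start: you claim that if $C_1$ is not a $(-1)$-curve then $Z\setminus(A\cup D)$ cannot be minimal. Neither of your two arguments for this works. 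The presence of a string $C_1,\dots,C_k$ with $C_i\cdot C_i\leq -2$ does not force the existence of a symplectically embedded $(-1)$-sphere in the complement; and rationality of $Z$ together with a large negative part of the intersection form only produces $(-1)$-classes in $Z$, not representatives disjoint from $A\cup D$. The minimal case genuinely occurs (it is exactly the situation of a minimal filling glued to the concave cap), and it is precisely where the dichotomy of the lemma has content.

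The paper handles that case by a different mechanism, which your proposal is missing. When $Z\setminus(A\cup D)$ is minimal and $D\cdot D\leq 5$, the results of \cite{OO2} show that $D$ represents the anti-canonical class of $Z$; since $C_1$ is an embedded sphere with $C_1\cdot D=1$, one gets $c_1(Z)[C_1]=1$ and the adjunction formula forces $C_1\cdot C_1=-1$, i.e.\ $C_1$ is a $(-1)$-curve. When $D\cdot D\geq 6$ one first blows up three points on the smooth part of $D$ away from $C_1$ to reduce to $\widetilde D\cdot\widetilde D\leq 5$, and then shows (using Proposition~\ref{prop:2.4}, Lemma~\ref{lem:2.7} and Lemma~\ref{lem:2.10}) that any $(-1)$-curve in $\widetilde Z\setminus(A\cup\widetilde D)$ would be disjoint from the three exceptional curves and hence would already live in $Z\setminus(A\cup D)$, contradicting minimality; so again $C_1$ must be a $(-1)$-curve. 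Without this anti-canonical-divisor argument (or some substitute), your proof does not establish the first alternative of the lemma, so as written the proposal is incomplete.
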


\begin{proof} First suppose that the complement of $A\cup D$ is minimal. If
$D\cdot D\leq 5$, then we know from \cite{OO2} that the
anti-canonical class of $Z$ is represented by $D$. Since $C_1\cdot
D=1$ it follows that $C_1$ is a $(-1)$-curve. If $D\cdot D\geq 6$,
then, after blowing up 3 points on the smooth part of $D$, away from
$C_1$, denote the proper transform of $D$ by $\widetilde D$, the
blow up of $Z$ by $\widetilde Z$ and the exceptional curves by
$e_1,e_2,e_3$. Then $\widetilde D\cdot\widetilde D\leq 5$. If the
complement of $A\cup\widetilde D$ is minimal then $C_1$ is a
$(-1)$-curve and we are done. If there exists a $(-1)$-curve $E$ in
$\widetilde Z\setminus (A\cup \widetilde D)$, then we claim that $E$
is disjoint from the exceptional curves $e_1,e_2,e_3$. To see this,
note that, by Proposition~\ref{prop:2.4}, we may assume that
$A,\widetilde D,e_1,e_2,e_3,E$ are pseudoholomorphic with respect to
some compatible almost complex structure $\widetilde J$. Suppose
that $E\cdot e_i\geq 1$ for some $i$. Then, after blowing down $E$,
the image of $e_i$ is either a singular pseudoholomorphic curve
which intersects $\widetilde D$ once only, contradicting
Lemma~\ref{lem:2.7}, or is an embedded pseudoholomorphic 0-curve
intersecting $\widetilde D$ once only, contradicting
Lemma~\ref{lem:2.10}. Thus $E$ exists as a symplectic $(-1)$-curve
in $Z\setminus (A\cup D)$, contradicting the minimality of the
latter. Thus this case does not occur.

Suppose now that $Z\setminus (A\cup D)$ is not minimal. Then there
exists a symplectic $(-1)$-curve $E$ in $Z\setminus (A\cup D)$. If
$[E]$ is represented by a $J$-holomorphic curve, we are done. If
$[E]$ can not be represented by a $J$-holomorphic curve, then pick a
sequence of tame almost complex structures $J_n$ converging to $J$
such that $[E]$ is represented by a $J_n$-holomorphic curve $E_n$
for each $n$. By the compactness theorem, after taking a
subsequence, $E_n$ converges to the image of a stable map. Let
$A_1,\ldots,A_l$ denote the irreducible components of this stable
map. Arguing as in the proof of Proposition 4.1 in \cite{OO3}, it
can be shown that none of the $A_i$'s coincides with (or multiply
covers) $D$.  Hence it also follows that none of the $A_i$'s
intersects $D$. Note also that $A_j$ is disjoint from $A$.  Since
$c_1(Z)[E]=1$, it follows that $c_1(Z)[A_j]>0$ for some $j$. By
replacing $A_j$ by the underlying simple curve, if $A_j$ is
multiply-covered, assume that $A_j$ is a simple curve. By
Lemma~\ref{lem:2.6}, $A_j$ is an embedded $J$-holomorphic sphere.
Also, by Lemma~\ref{lem:2.9}, $A_j\cdot A_j<0$. By the adjunction
formula, it now follows that $A_j$ is a $J$-holomorphic
$(-1)$-curve. \qed
\end{proof}

In general, we call a configuration of rational curves $\cD=A\cup
D\cup C_1\cup\cdots\cup C_k$ as in Figure~\ref{fig:dihedconfig}, in
a closed symplectic 4-manifold $Z$, {\em admissible} (for symplectic
fillings of links of dihedral singularities) if it can be obtained
as the total transform of an iterated blow-up of either:
\begin{enumerate}[\rm (a)]
\item a union of a cuspidal rational curve of bi-degree $(2,2)$ and
a 0-curve intersecting only at the cusp point in the ruled surface
$\C P^1\times\C P^1$, or
\item a union of a singular rational
curve representing the class $3\C P^1-L$ and a 0-curve intersecting
only at the cusp point in the one-point blow-up of $\C P^2$, where
$L$ represents the exceptional curve of the blow-up.
\end{enumerate}

We call $A\cup D\cup C_1\cup\cdots\cup C_k$ a {\em pre-admissible}
configuration if it becomes admissible after possibly blowing down
some $(-1)$-curves intersecting only $D$.

Assume now that $M\setminus (A\cup D\cup C_1\cup\cdots\cup C_k)$ is
minimal. The following proposition shows that after blowing down a
maximal family of pseudoholomorphic $(-1)$-curves in $M\setminus
(A\cup D)$ the configuration $A\cup D\cup C_1\cup\cdots\cup C_k$ is
reduced to a pre-admissible configuration. Note that by
Lemma~\ref{lem:2.11} these $(-1)$-curves are necessarily disjoint.
Note also that these $(-1)$-curves, if they are not contained in the
string $C_1,\ldots,C_k$, can intersect it at most once, that is, for
any such $(-1)$-curve $E$, $\sum E\cdot C_i\leq 1$. Indeed, suppose
that there is a $(-1)$-curve $E$ such that $\sum E\cdot C_i> 1$,
then after contracting $E$ we get either a singular
pseudoholomorphic curve or a cycle of pseudoholomorphic spheres
whose intersection number with $A$ is $0$.  This contradicts
Lemma~\ref{lem:2.9} and Remark~\ref{rem:2.12}. Now let $\cJ_\cC$
denote the set of compatible almost complex structures with respect
to which $\cC=A\cup D\cup C_1\cup\cdots\cup C_k$ is
pseudoholomorphic.

\begin{proposition} \label{prop:dihed} Let $J$ be a compatible almost complex structure
which is generic in $\cJ_\cC$. Denote by $M^{\pr}$ the symplectic
$4$-manifold obtained by blowing down all $J$-holomorphic
$(-1)$-curves in $M\setminus (A\cup D)$ and by $C_i^\pr$ the image
of $C_i$. Then $\{A,D,C_i^\pr\}$ is a pre-admissible configuration
for a symplectic fillings of a link of a dihedral singularity.
\end{proposition}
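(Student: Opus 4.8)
The plan is to follow the template established in Proposition~\ref{prop:cycquot}, now carried out relative to the fixed curves $A\cup D$ rather than to $L$ alone. First I would verify that after blowing down the maximal family $\{E_j\}$ of $J$-holomorphic $(-1)$-curves in $M\setminus(A\cup D)$, no $(-1)$-curve survives in $M^\pr\setminus(A\cup D)$ except possibly some of the $C_i^\pr$: if $E$ were such a curve, then after a small perturbation of the almost complex structure (away from $\cC$, using Proposition~\ref{prop:2.4} to keep $A, D$ and the relevant curves pseudoholomorphic) we could assume $E$ misses the images of the blown-down $E_j$, so $E$ would lift back to a $(-1)$-curve in $M\setminus(A\cup D)$, contradicting maximality of $\{E_j\}$. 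Next I would check that each $C_i^\pr$ is embedded: as in the cyclic case, it suffices to show each $E_j$ meets the string $C_1\cup\cdots\cup C_k$ in at most one point, with multiplicity one. If some $E_j$ met a $C_i$ with $E_j\cdot C_i\geq 2$, contracting $E_j$ would produce a nodal (hence singular) curve meeting $A$ in $0$ points, contradicting Lemma~\ref{lem:2.9}; if $E_j$ met two distinct curves in the string, contracting it would create either a singular curve or a cycle of pseudoholomorphic spheres disjoint from $A$, excluded by Lemma~\ref{lem:2.9} together with Remark~\ref{rem:2.12}. (These are precisely the two observations recorded just before the proposition statement.)

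With embeddedness in hand, the core of the argument is to keep contracting. Lemma~\ref{lem:dihed} applies to $M^\pr$: since $M^\pr\setminus(A\cup D)$ contains no $(-1)$-curve outside the string, either the string $C_1,\ldots,C_k$ is empty — in which case $A\cup D$ is already the admissible model and we are done — or $C_1^\pr$ is itself a $(-1)$-curve. Blow $C_1^\pr$ down; the resulting configuration $A\cup D'\cup C_2''\cup\cdots\cup C_k''$ is again of the shape in Figure~\ref{fig:dihedconfig} ($D$ acquires self-intersection raised by the appropriate amount and still carries a single $(2,3)$-cusp met by the $0$-curve $A$ at the cusp, since $C_1$ meets $D$ transversally away from the cusp). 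One must recheck that no new $(-1)$-curve appears in the complement of $A\cup D'$ outside the remaining string: the same lifting argument shows any such curve pulls back to a $(-1)$-curve in $M\setminus(A\cup D)$, again contradicting maximality of $\{E_j\}$. Iterating, the string is exhausted and we arrive at a configuration consisting only of $A$ and a cuspidal curve $D$ with $A$ a $0$-curve through the cusp, no $(-1)$-curves in the complement.

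It remains to identify that terminal configuration with model (a) or (b). Here I would invoke the discussion at the start of the dihedral subsection: $Z$ is rational (shown there via the reduction to the simple dihedral filling $D_{c+1}$ and Theorem~\ref{thm:McDuff}), and $A$ is an embedded pseudoholomorphic $0$-sphere, so by Theorem~\ref{thm:McDuff} one may blow down $(-1)$-curves away from $A$ to reach a ruled surface in which $A$ is a fibre; the cuspidal curve $D$ is not a fibre and must be a bisection, forcing the base to be $\C P^1$, so $Z$ reduces to $\C P^1\times\C P^1$ or, after one further blow-down/blow-up bookkeeping of the cusp, to the one-point blow-up of $\C P^2$ — precisely cases (a) and (b), with $D$ in class $(2,2)$ or $3\C P^1 - L$ respectively (the bi-degree/class being pinned down by $c_1(Z)[D]$ and $D\cdot D$, using that $D$ is anti-canonical once the complement is minimal, cf. Theorem~\ref{thm:Ohta-Ono} and \cite{OO2}). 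Since the original $C_i^\pr$ were only blown down onto $D$, the configuration $\{A,D,C_i^\pr\}$ in $M^\pr$ is, by definition, the total transform of an iterated blow-up of model (a) or (b) after blowing down those $(-1)$-curves in the string — that is, pre-admissible.

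I expect the main obstacle to be the repeated bookkeeping in the inductive step: each time a $C_i^\pr$ is contracted one must argue that the resulting $D'$ still has a single $(2,3)$-cusp met by $A$ at that cusp (so that Lemma~\ref{lem:dihed}, Lemma~\ref{lem:2.9} and Lemma~\ref{lem:2.10} continue to apply), and that the intersection pattern of the surviving $C_j$'s stays of the admissible shape — together with the lifting argument showing maximality of $\{E_j\}$ is not violated at any stage. None of these steps is deep, but keeping the configuration genuinely of the form in Figure~\ref{fig:dihedconfig} throughout is where care is needed; the identification of the terminal model, by contrast, is essentially forced by Theorem~\ref{thm:McDuff} and Theorem~\ref{thm:Ohta-Ono}.
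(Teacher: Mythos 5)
Your core argument is the same as the paper's: use Lemma~\ref{lem:dihed} together with the lifting/maximality argument (any $(-1)$-curve in $M^\pr\setminus(A\cup D)$ not among the $C_i^\pr$ would already have existed in $M\setminus(A\cup D)$) to conclude that the string itself contains a $(-1)$-curve, blow it down, and iterate until the string is exhausted; your preliminary checks (each $E_j$ meets the string at most once, with multiplicity one, via Lemma~\ref{lem:2.9} and Remark~\ref{rem:2.12}) are exactly the observations the paper records just before the proposition. Where you genuinely diverge is the terminal identification. The paper does not re-derive the model: it observes that once the string is gone the complement of a neighbourhood of $A\cup D$ is a symplectic filling of a simple dihedral (or $A_3=D_3$) singularity, invokes the classification of \cite{OO2} (after blowing up at most $3$ points on the smooth part of $D$ if needed) to produce $(-1)$-curves meeting $D$ once transversally whose contraction yields configuration (a) or (b), and then finishes by noting that these curves already exist in the original manifold, so the blow-downs can be reordered. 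You instead argue directly via Theorem~\ref{thm:McDuff}: blow down away from the $0$-curve $A$ to a rational ruled surface in which $A$ is a fibre and $D$ a bisection, and pin down the class $(2,2)$, resp.\ $3\,\C P^1-L$, by adjunction/anticanonicity (Theorem~\ref{thm:Ohta-Ono}). This is a legitimate, more self-contained route, though to make it complete you must check that the $(-1)$-curves contracted in this last stage meet $D$ exactly once, transversally, away from the cusp and from $A$ (a genus/delta count for the bisection forces this), so that the cusp configuration of the model survives. One more point needs tightening: your closing sentence conflates the string blow-downs with the blow-downs required by the definition of pre-admissibility. The string curves are part of the configuration and their contraction is part of exhibiting it as a \emph{total} transform; what pre-admissibility requires is that the \emph{extra} $(-1)$-curves meeting only $D$ (those used in the terminal identification) already exist in $M^\pr$ disjoint from $A$ and from the string, so that one may contract them first and only then read off the total-transform structure --- this is precisely the paper's ``changing the order of the blowing down processes'' step, and it should be stated explicitly rather than left implicit. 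Also, Lemma~\ref{lem:dihed} only guarantees that \emph{some} $C_i^\pr$ is a $(-1)$-curve, not necessarily $C_1^\pr$; blowing down a middle curve of the string merely shortens the string without touching $D$, so your induction still runs, but the bookkeeping should allow for this case.
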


\begin{proof} By Lemma~\ref{lem:dihed} there exists a $(-1)$-curve in
$M^\pr\setminus (A\cup D)$. We claim that this $(-1)$-curve must be
one of the $C_i^\pr$. Suppose that it is not. Then it can intersect
only one of the $C_i^\pr$ at exactly one point transversally. Hence
it must already have been in $M\setminus (A\cup D)$ contradicting
the fact that we blew down all such $(-1)$-curves. We now blow down
this curve to obtain a new configuration $\{A,D^\pr,C_i^{\pr\pr}\}$.
If the string $\{C_i^{\pr\pr}\}$ is not empty, we can argue in a
similar way to show that it must also contain a $(-1)$-curve.
Blowing down this $(-1)$-curve also and continuing in this way we
can show that the whole string $\{C_i^\pr\}$ can eventually be blown
down. Thus we get the compactification of the symplectic filling of
the link of a simple dihedral singularity or the $A_3$-singularity
in \cite{OO2}, after blowing up, if necessary, at most 3 points on
the smooth part of the image of $D$. (In \cite{OO2}, we dealt with
the $A_3$-singularity as one of the $A_n$-singularities not as the
``$D_3$-singularity''. However, the argument for the dihedral case
does work for the $D_3=A_3$-case.) In fact, we can see that there
are pseudoholomorphic $(-1)$-curves, which intersect $D$ once
transversally. After contracting them, we get the configuration of
type (a) or (b). As we have noticed before, these $(-1)$-curves
eventually exist in the original manifold $M$. Changing the order of
the blowing down processes, we find that $\{A,D,C_i^\pr\}$ is a
pre-admissible configuration. \qed
\end{proof}

\subsection{Tetrahedral, octahedral and icosahedral singularities}
For the purposes of classification of symplectic fillings of
tetrahedral, octahedral and icosahedral singularities, it is
convenient to divide these singularities into two sets: those with a
branch consisting of two $(-2)$-curves intersecting the central
curve in the minimal resolution and those with a branch consisting
of a single $(-3)$-curve intersecting the central curve. We
designate these singularities as type $(3,2)$ singularities and type
$(3,1)$ singularities respectively. There is exactly one class of
quotient singularities which lies in both sets, namely the
tetrahedral singularities $T_{6(b-2)+3}$.

We begin by discussing the classification of fillings of type
$(3,2)$ singularities. Note that, given a singularity $\G$ of type
$(3,2)$, we can always choose a compactifying divisor such that its
central curve has self-intersection number $-1$. Namely, if $b=2$,
the central curve given in Section 2 is a $(-1)$-curve. If $b \geq
3$, we blow up the compactifying divisor given earlier at the
transversal intersection of the central curve and the third branch,
that is, the branch that does not consist of a single $(-2)$- or
$(-3)$-curve (except in case $\G=T_{6(b-2)+1}$, in which case we
blow up at the intersection of the central curve and one of the
$(-3)$-curves), repeatedly until the self-intersection number of the
central curve has dropped to $-1$. Now let $\widetilde M$ be a
closed symplectic 4-manifold containing a configuration of
symplectically embedded 2-spheres intersecting in the manner shown
in the first picture in Figure~\ref{fig:2-2bl}. Here $a\geq 1$ and
$c_i\geq 2$ for $i=1,\ldots,k$. Note that $a=1$, when $b\geq 3$.
When $b = 2$, $2 \leq a \leq 5$ (see Tables 1, 2, 3). It will be
convenient to transform this configuration of symplectically
embedded 2-spheres into one containing a cusp curve. We achieve this
by a sequence of blow-downs, as in the cases $E_6,E_7,E_8$ in
\cite{OO2} (see Figure~\ref{fig:2-2bl}).
\begin{figure}
\begin{center}
\includegraphics[angle=90]{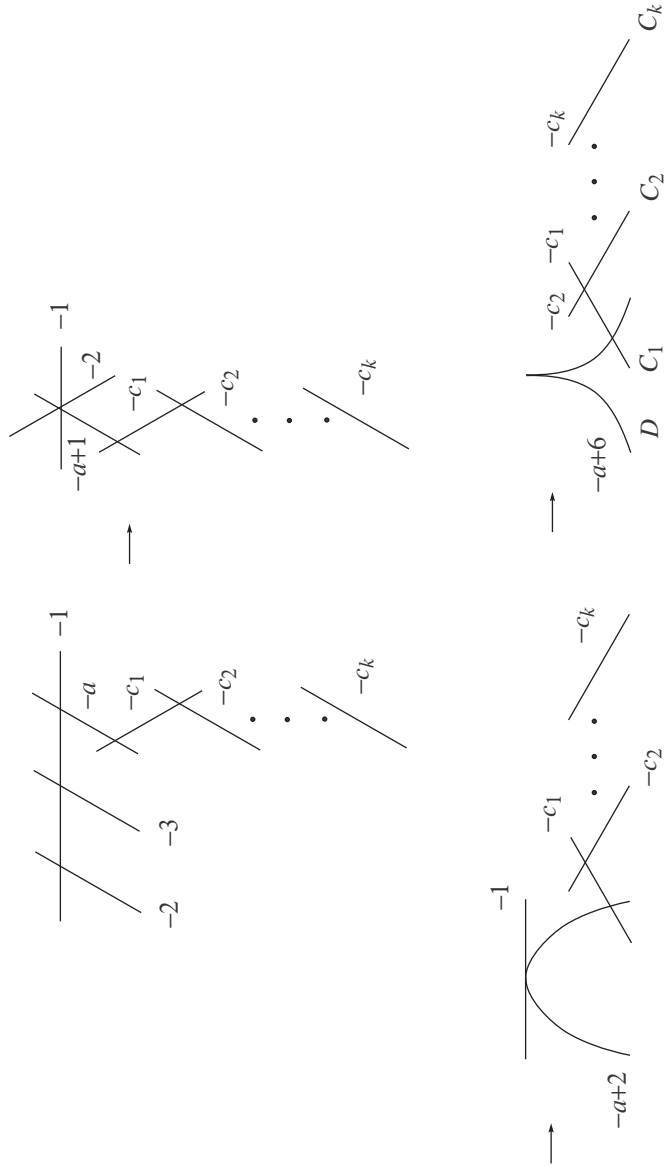}
\end{center}
\caption{Sequence of blow-downs transforming the compactifying
divisor of a tetrahedral, octahedral or icosahedral singularity of
type $(3,2)$ into a configuration containing a cusp curve}
\label{fig:2-2bl}
\end{figure}
Let $M$ denote the resulting closed symplectic 4-manifold, $D$ the
cusp curve and $C_1,\ldots,C_k$ the string of curves attached to
$D$. As the self-intersection number of the cusp curve $D$ is always
positive, we can immediately conclude, by the main theorem in
\cite{OO3}, that $M$, and hence $\widetilde M$, is a rational
symplectic manifold.

Consider now, generally, a closed symplectic 4-manifold $Z$
containing a configuration of rational curves $\cD=D\cup
C_1\cup\cdots\cup C_k$ as depicted in Figure~\ref{fig:2-2blconfig}.
Here $D$ is a singular curve with a $(2,3)$-cusp and $C_1,\ldots
C_k$ are embedded curves. By \cite{OO3}, $D\cdot D\leq 9$. Also, by
Lemma~\ref{lem:2.9} and Lemma~\ref{lem:2.10}, $C_i\cdot C_i\leq -1$
for all $i$.

\begin{figure}
\begin{center} \epsfig{file=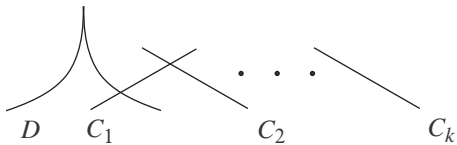} \end{center}
\caption{General configuration of rational curves considered in case
of symplectic fillings of links of tetrahedral, octahedral and
icosahedral singularities of type $(3,2)$} \label{fig:2-2blconfig}
\end{figure}

\begin{lemma} Assume that the string $C_1,\ldots,C_k$ is nonempty.
Let $J$ be a compatible almost complex structure for which
$D,C_1,\ldots,C_k$ are pseudoholomorphic. Then $C_1$ is a
$(-1)$-curve or there exist a $J$-holomorphic $(-1)$-curve in
$Z\setminus D$.
\end{lemma}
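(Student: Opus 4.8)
The plan is to mimic the proof of Lemma~\ref{lem:dihed} almost verbatim, with the curve $A$ removed from all the complements (here the relevant divisor is just $D$, since type $(3,2)$ singularities have a compactifying divisor with central $(-1)$-curve which, after the blow-downs of Figure~\ref{fig:2-2bl}, is absorbed into the cusp curve, so there is no $0$-curve $A$ to keep track of). First I would treat the case in which $Z\setminus D$ is minimal. By \cite{OO3} (Theorem~\ref{thm:Ohta-Ono}) we have $D\cdot D\leq 9$. If $D\cdot D$ is small enough that the results of \cite{OO2} identify $D$ with an anti-canonical divisor of $Z$, then $c_1(Z)[C_1]=C_1\cdot D=1$ together with the adjunction formula forces $C_1\cdot C_1=-1$, so $C_1$ is a $(-1)$-curve and we are done. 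If $D\cdot D$ is too large for that, blow up a suitable number of points on the smooth part of $D$, away from $C_1$, to drop $\widetilde D\cdot\widetilde D$ into the range where $\widetilde D$ becomes anti-canonical; call the exceptional spheres $e_1,\ldots,e_p$ and the blow-up $\widetilde Z$. If $\widetilde Z\setminus\widetilde D$ is still minimal, the same computation gives that $C_1$ is a $(-1)$-curve. Otherwise there is a $(-1)$-curve $E$ in $\widetilde Z\setminus\widetilde D$; using Proposition~\ref{prop:2.4} to make $\widetilde D,e_1,\ldots,e_p,E$ simultaneously pseudoholomorphic, one shows $E$ is disjoint from each $e_i$: if $E\cdot e_i\geq 1$, blowing down $E$ turns $e_i$ into either a singular curve meeting $\widetilde D$ once (contradicting Lemma~\ref{lem:2.7}) or an embedded $0$-curve meeting $\widetilde D$ once (contradicting Lemma~\ref{lem:2.10}). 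Hence $E$ descends to a symplectic $(-1)$-curve in $Z\setminus D$, contradicting minimality, so this subcase is vacuous.

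Next I would handle the case in which $Z\setminus D$ is not minimal. Then there is a symplectic $(-1)$-class $[E]$ with a representative disjoint from $D$. If $[E]$ has a $J$-holomorphic representative we are finished. Otherwise, choose tame almost complex structures $J_n\to J$ with $D$ always pseudoholomorphic and $[E]$ represented by a $J_n$-holomorphic sphere $E_n$; by Gromov compactness $E_n$ converges to a stable map with irreducible components $A_1,\ldots,A_l$. Arguing as in Proposition~4.1 of \cite{OO3}, no $A_i$ equals or multiply covers $D$, and in fact no $A_i$ meets $D$. Since $c_1(Z)[E]=1$, some component $A_j$ has $c_1(Z)[A_j]>0$; replacing it by its underlying simple curve if necessary, Lemma~\ref{lem:2.6} shows $A_j$ is an embedded $J$-holomorphic sphere, Lemma~\ref{lem:2.9} gives $A_j\cdot A_j<0$, and then the adjunction formula forces $A_j\cdot A_j=-1$, i.e.\ $A_j$ is a $J$-holomorphic $(-1)$-curve in $Z\setminus D$, as required.

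The only genuinely new wrinkle compared with Lemma~\ref{lem:dihed} is bookkeeping: one must check that all the cited obstruction lemmas (\ref{lem:2.5}--\ref{lem:2.10}) apply with the hypothesis ``$D\cdot D\geq 2$'' or ``$D\cdot D>0$'' that is available here, and that the blow-up count $p$ needed to bring $\widetilde D\cdot\widetilde D$ into the anti-canonical range can always be chosen with all blown-up points on the smooth locus of $D$ and away from $C_1$ — this is possible precisely because $D$ has a single $(2,3)$-cusp and $D\cdot D\leq 9$, so $D$ has plenty of smooth points. I expect the main obstacle to be the verification, in the non-minimal case, that no limit component $A_i$ meets $D$: this is exactly the delicate intersection-positivity argument of Proposition~4.1 in \cite{OO3}, and one must confirm that the absence of the auxiliary $0$-curve $A$ (present in the dihedral setting) does not break it. It does not, since that argument only used the cuspidal curve $D$ and its positive self-intersection, both of which persist here.
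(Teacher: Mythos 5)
Your proposal is correct and matches the paper's own argument: the paper treats the minimal case with $D\cdot D\leq 5$ via the anti-canonical identification from \cite{OO2} and then explicitly defers the rest to the proof of Lemma~\ref{lem:dihed}, which is exactly the blow-up trick and Gromov-compactness argument you reproduce with the curve $A$ deleted. Your bookkeeping remarks (positivity of $D\cdot D$ for Lemmas~\ref{lem:2.6}, \ref{lem:2.7} and Theorem~\ref{thm:Ohta-Ono}, and choosing the blown-up points on the smooth locus of $D$ away from $C_1$) are precisely the points the paper leaves implicit.
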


\begin{proof} Suppose that the complement of $D$ is minimal and $D\cdot
D\leq 5$. Then, from \cite{OO2}, we know that an anti-canonical
divisor is given by $D$. It follows that $C_1$ is a $(-1)$-curve.
The remainder of the proof proceeds in a similar way to the proof of
Lemma~\ref{lem:dihed}. \qed
\end{proof}

In general, we call a configuration of rational curves $\cD=D\cup
C_1\cup\cdots\cup C_k$ as in Figure~\ref{fig:2-2blconfig}, in a
closed symplectic 4-manifold $Z$, {\em admissible} (for symplectic
fillings of links of tetrahedral, octahedral and icosahedral
singularities of type $(3,2)$) if it can be obtained as the total
transform of an iterated blow-up of the cuspidal cubic curve in $\C
P^2$, or of the cuspidal curve of bi-degree $(2,2)$ in $\C P^1
\times \C P^1$. In \cite{OO2}, we can always blow down a maximal
family of $(-1)$-curves to get a cuspidal cubic curve in $\C P^2$.
In fact, if $b_2(M) \geq 3$, one can also contract another maximal
family of $(-1)$-curves to get $\C P^1 \times \C P^1$. In our
present situation, we do not blow down $(-1)$-curves intersecting
both $D$ and some $C_i$.  Hence we may not arrive at $\C P^2$ but at
$\C P^1 \times \C P^1$. Again, we call such a configuration {\em
pre-admissible} if it becomes admissible after possibly blowing down
some $(-1)$-curves intersecting only $D$.

Now assume that $M\setminus (D\cup C_1\cup\cdots\cup C_k)$ is
minimal. The following proposition shows that after blowing down a
maximal family of pseudoholomorphic $(-1)$-curves in $M\setminus D$
the configuration $D\cup C_1\cup\cdots\cup C_k$ is reduced to a
pre-admissible configuration. By Lemma~\ref{lem:2.11}, these
$(-1)$-curves are necessarily disjoint. Also, these $(-1)$-curves,
if they are not contained in the string $C_1,\ldots,C_k$, can
intersect it at most once. Indeed, suppose that there is such a
$(-1)$-curve $E$ such that $\sum E\cdot C_i\geq 2$, then, after
contracting $E$, the image of all the curves $C_i$ contains a
singular pseudoholomorphic curve or a cycle of pseudoholomorphic
spheres. This contradicts Lemma~\ref{lem:2.6}, Lemma~\ref{lem:2.7}
or Remark~\ref{rem:2.12}, thus proving the assertion. (Note that for
type $(3,2)$ singularities $\G$ for which the string
$C_1,\ldots,C_k$ is nonempty, the self-intersection number $D\cdot
D\geq 3$.) In summary, after contracting such a pseudoholomorphic
$(-1)$-curve, we again get a configuration consisting of $D$ and a
string of embedded spheres. We can prove the following proposition
in a similar way to the proof Proposition~\ref{prop:dihed}. Let
$\cJ_\cC$ denote the set of compatible almost complex structures
with respect to which $\cC=D\cup C_1\cup\cdots\cup C_k$ is
pseudoholomorphic.

\begin{proposition} Let $J$ be a compatible almost complex structure
which is generic in $\cJ_\cC$. Denote by $M^{\pr}$ the symplectic
$4$-manifold obtained by blowing down all $J$-holomorphic
$(-1)$-curves in $M\setminus \cD$ and by $C_i^\pr$ the image of
$C_i$. Then $\{D,C_i^\pr\}$ is a pre-admissible configuration for a
symplectic filling of a link of a tetrahedral, octahedral or
icosahedral singularity of type $(3,2)$.
\end{proposition}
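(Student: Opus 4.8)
The plan is to transcribe the proof of Proposition~\ref{prop:dihed} almost verbatim, with the cuspidal curve $D$ playing the role formerly played by the pair $A\cup D$ and with the lemma immediately preceding the statement taking the place of Lemma~\ref{lem:dihed}. If the string $C_1,\ldots,C_k$ is empty there is nothing to prove: by \cite{OO2} the configuration is already a cuspidal cubic in $\C P^2$ or a cuspidal curve of bi-degree $(2,2)$ in $\C P^1\times\C P^1$, hence admissible. So I would assume the string is nonempty. Since $M^\pr$ is obtained by blowing down a maximal family of $J$-holomorphic $(-1)$-curves in $M\setminus D$, the complement $M^\pr\setminus D$ is minimal, and the preceding lemma furnishes a $J$-holomorphic $(-1)$-curve in $M^\pr\setminus D$.

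First I would check that this $(-1)$-curve must be one of the $C_i^\pr$. If it were not, then after perturbing $J$ away from $\cD$ one may assume that it avoids the images of the blown-down curves and meets each $C_i^\pr$ transversally; the observation recorded just before the statement --- which rests on Lemmas~\ref{lem:2.6}, \ref{lem:2.7}, \ref{lem:2.9} and \ref{lem:2.10}, Remark~\ref{rem:2.12}, and the fact that $D\cdot D\geq 3$ for a type $(3,2)$ singularity with nonempty string --- shows that it meets the string at most once, so it would lift to a $(-1)$-curve in $M\setminus D$, contradicting maximality. Contracting this $C_i^\pr$, I would obtain a configuration of exactly the shape of Figure~\ref{fig:2-2blconfig}, namely a cuspidal curve carrying a shorter string of embedded spheres, and if the string is still nonempty the preceding lemma applies again. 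Iterating, after finitely many steps the whole string is gone.

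At that point I would invoke \cite{OO2}: possibly after first blowing up a few points on the smooth part of the image of $D$, away from the already-contracted string, what remains is the compactification of the symplectic filling of the link of the associated simple singularity of type $E_6$, $E_7$ or $E_8$, in which there are $(-1)$-curves meeting $D$ once transversally. Contracting those lands on a cuspidal cubic in $\C P^2$ or a cuspidal curve of bi-degree $(2,2)$ in $\C P^1\times\C P^1$; one may end at $\C P^1\times\C P^1$ rather than at $\C P^2$ because in the present process one never contracts a $(-1)$-curve meeting both $D$ and some $C_i$, whereas in \cite{OO2} one passes freely between $\C P^2$ and $\C P^1\times\C P^1$ by contracting suitable $(-1)$-curves once $b_2\geq 3$. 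Finally I would reorganize the order of all the blow-downs, carrying out the contractions onto the model last, so as to display $\{D,C_i^\pr\}$ as the total transform of an iterated blow-up of the model up to the auxiliary blow-ups on the smooth part of $D$ --- that is, as a pre-admissible configuration.

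The main obstacle is exactly this last piece of bookkeeping. One has to verify that every $(-1)$-curve used in the reduction to the \cite{OO2} picture --- including those concealed behind the auxiliary blow-ups on the smooth part of $D$ --- already descends from a $(-1)$-curve of $M$, so that reversing the order of the contractions genuinely displays $\{D,C_i^\pr\}$ as a total transform; and one must keep checking, with the lemmas cited above, that no $(-1)$-curve occurring anywhere in the process ever meets both $D$ and a $C_i$ (this is where $D\cdot D\geq 3$ is used repeatedly). Once these points are secured, the argument is word-for-word the dihedral one and nothing further is required.
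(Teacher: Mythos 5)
Your proposal is correct and takes exactly the paper's route: the paper offers no separate argument for this proposition, stating only that it is proved ``in a similar way to the proof of Proposition~\ref{prop:dihed}'', and your transcription of that dihedral proof --- with $D$ in place of $A\cup D$ and the preceding lemma in place of Lemma~\ref{lem:dihed}, including the lifting-to-$M$/maximality contradiction for a stray $(-1)$-curve, the iterative contraction of the string, the reduction to the \cite{OO2} picture after auxiliary blow-ups on the smooth part of $D$, the possibility of ending at $\C P^1\times\C P^1$ rather than $\C P^2$, and the final reordering of blow-downs --- is precisely what the authors intend. The only cosmetic slip is in the empty-string case, where the configuration $\{D\}$ in $M^{\pr}$ is in general only pre-admissible rather than already admissible (since $J$-holomorphic $(-1)$-curves meeting $D$ may survive in $M^{\pr}$), but that is all the proposition claims, so nothing is lost.
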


We now turn to the classification of fillings of type $(3,1)$
singularities. Again, given a singularity $\G$ of type $(3,1)$, as
in the case of type $(3,2)$ singularities, we can always choose a
compactifying divisor whose central curve has self-intersection
number $-1$. Namely, as before, when $b \geq 3$, we blow up the
compactifying divisor given earlier at the transversal intersection
of the central curve and the third branch, that is, the branch that
does not consist of one or two $(-2)$-curves (except in case
$\G=T_{6(b-2)+5}$, in which case we blow up at the intersection of
the central curve and one of the branches consisting of 2
$(-2)$-curves), repeatedly until the self-intersection number of the
central curve has dropped to $-1$. Now let $\widetilde M$ be a
closed symplectic 4-manifold containing a configuration of
symplectically embedded 2-spheres intersecting in the manner shown
in the first picture in Figure~\ref{fig:3-bl}. Here $a\geq 1$ and
$c_i\geq 2$ for $i=1,\ldots,k$. Again it will be convenient to
transform this configuration of symplectically embedded 2-spheres
into one containing a cusp curve. We achieve this by a sequence of
blow-downs and a blow-up (see Figure~\ref{fig:3-bl}).
\begin{figure}
\begin{center}
\includegraphics[angle=90]{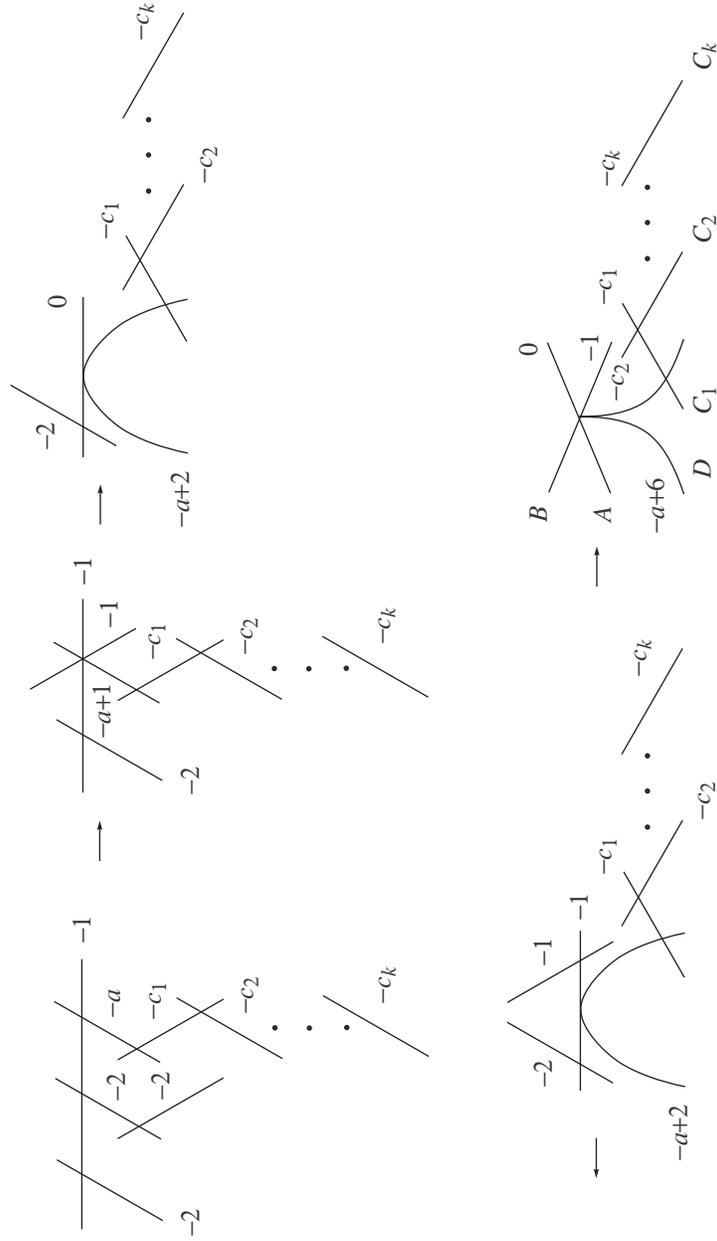}
\end{center}
\caption{Sequence of blow-downs and a blow-up transforming the
compactifying divisor of a tetrahedral, octahedral or icosahedral
singularity of type $(3,1)$ into a configuration containing a cusp
curve} \label{fig:3-bl}
\end{figure}
Let $M$ denote the resulting closed symplectic 4-manifold, $D$ the
cuspidal curve, $A$ the 0-curve, $B$ the $(-1)$-curve and
$C_1,\ldots,C_k$ the string of curves intersecting $D$. Since the
complement of a regular neighbourhood of $A\cup D$ is a symplectic
filling of a simple dihedral singularity or $A_3$, it follows, by
the results of \cite {OO2}, that $M$, and hence $\widetilde M$, is a
rational symplectic 4-manifold. Note that $A_3$ can be also treated
as a simple dihedral singularity ``$D_3$''. Let $\cJ_{\cC}$ denote
the set of compatible almost complex structures with respect to
which $\cC=A\cup B\cup D\cup C_1\cup\cdots\cup C_k$ is
pseudoholomorphic.

\begin{lemma} \label{lem:type31}
Let $J$ be generic in $\cJ_{\cC}$. Then there exists a
$J$-holomorphic $(-1)$-curve $E$ in $M\setminus (A\cup D)$ such that
$B\cdot E=1$. Moreover, such a curve $E$ is unique.
\end{lemma}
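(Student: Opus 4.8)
The plan is to establish existence and uniqueness separately, in each case exploiting the singular curve $D$ through Lemmas~\ref{lem:2.5}, \ref{lem:2.6}, \ref{lem:2.9} and \ref{lem:2.10} together with positivity of intersections. I first record the feature of the configuration produced in Figure~\ref{fig:3-bl} on which everything rests, and which is partly forced by the statement: the $(-1)$-curve $B$ satisfies $B\cdot D=2$ (and $B$ is disjoint from $A$ and from $C_2,\dots,C_k$). That $B\cdot D\geq 2$ is forced: if instead $B\cdot D\leq 1$, then contracting any $(-1)$-curve $E\subset M\setminus(A\cup D)$ with $B\cdot E=1$ would turn $B$ into a symplectically embedded sphere of self-intersection $0$ that is either disjoint from the singular curve $D$ or meets it exactly once, contradicting Lemma~\ref{lem:2.9} or Lemma~\ref{lem:2.10}, so no such $E$ could exist. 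The equality $B\cdot D=2$ is read off from Figure~\ref{fig:3-bl}.

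For existence I would argue as in the proof of Lemma~\ref{lem:dihed}. Since $M$ is rational and the complement of a regular neighbourhood of $A\cup D$ is a symplectic filling of the link of a simple dihedral (or $A_3$) singularity, the explicit blow-ups performed in Figure~\ref{fig:3-bl} exhibit a symplectic $(-1)$-class $e$ with $e\cdot[A]=e\cdot[D]=0$ and $e\cdot[B]=1$. For $J$ generic in $\cJ_\cC$, Proposition~\ref{prop:2.4} --- which applies because $D$ is singular --- provides a $J$-holomorphic representative $E$ of $e$; this $E$ is an embedded $(-1)$-sphere, and since $A$ and $D$ are irreducible $J$-holomorphic curves distinct from $E$ with $E\cdot[A]=E\cdot[D]=0$, positivity of intersections forces $E\subset M\setminus(A\cup D)$, while $B\cdot E=e\cdot[B]=1$. (If one prefers not to invoke genericity to represent $e$ by a curve, one runs the degeneration argument of Proposition~4.1 in \cite{OO3}: take $J_n\to J$ with $E_n$ a $J_n$-holomorphic representative of $e$, pass to the limiting $J$-stable map, use that no component can coincide with, multiply cover, or even meet $D$, and extract a component of positive $c_1$ which, after passing to the underlying simple curve, is a $J$-holomorphic $(-1)$-curve in $M\setminus(A\cup D)$ by Lemmas~\ref{lem:2.6} and \ref{lem:2.9} and the adjunction formula, and which must carry the intersection with $B$.)

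For uniqueness, suppose $E\neq E'$ are two $(-1)$-curves in $M\setminus(A\cup D)$ with $B\cdot E=B\cdot E'=1$, and put $m:=E\cdot E'\geq 0$. If $m=0$, contracting $E$ and then the image of $E'$ turns $B$ into a symplectically embedded $(+1)$-sphere that still meets the singular curve $D$ in the $2$ points of $B\cap D$; since $2<3$, this contradicts Lemma~\ref{lem:2.5}. If $m\geq 1$, contracting $E$ turns $E'$ into an irreducible pseudoholomorphic curve of self-intersection $m^2-1\geq 0$ lying in the complement of $D$ --- an embedded sphere when $m=1$, and a rational curve with one ordinary $m$-fold point when $m\geq 2$ --- contradicting Lemma~\ref{lem:2.9} in the first case, and, in the second case, Lemma~\ref{lem:2.6} (the self-intersection of $D$ being positive) after smoothing the multiple point to a higher-genus pseudoholomorphic curve disjoint from $D$. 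Hence $E$ is unique.

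The step I expect to be the main obstacle is existence, and within it the construction of the class $e$: one must read off enough of $H_2(M)$ from the explicit sequence of blow-downs and the blow-up in Figure~\ref{fig:3-bl} to recognise a symplectic $(-1)$-class with $e\cdot[B]=1$ and $e\cdot[A]=e\cdot[D]=0$ (equivalently, to arrange the degeneration argument so that the $(-1)$-curve it produces is the one meeting $B$ rather than, say, one lying in the string $C_1,\dots,C_k$), using that the complement of $A\cup D$ realises a symplectic filling of a simple dihedral link whose deformation type is pinned down in \cite{OO2}. The remaining steps are routine bookkeeping of intersection numbers under blow-ups and blow-downs, combined with Lemmas~\ref{lem:2.5}, \ref{lem:2.6}, \ref{lem:2.9} and \ref{lem:2.10}.
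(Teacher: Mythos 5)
Your uniqueness argument is essentially sound (a direct blow-down argument via Lemmas~\ref{lem:2.5}, \ref{lem:2.6} and \ref{lem:2.9}, different from but comparable to the paper's, which instead reuses Lemma~\ref{lem:2.11} together with an anti-canonical count). The genuine gap is in the existence half, precisely at the point you yourself flag as ``the main obstacle'': you never actually produce the symplectic $(-1)$-class $e$ with $e\cdot[B]=1$ and $e\cdot[A]=e\cdot[D]=0$. Your claim that this class is ``exhibited by the explicit blow-ups performed in Figure~\ref{fig:3-bl}'' cannot work, because that figure only describes the compactifying divisor inside the concave piece $Y$; the ambient manifold $M$ is obtained by gluing in an \emph{arbitrary, unknown} minimal filling $X$, and the presence or position of symplectic $(-1)$-classes in $M$ (in particular of one meeting $B$ once and missing $A\cup D$) is exactly what the lemma is supposed to establish, not something visible in the divisor. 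Your fallback degeneration argument does not repair this, since it starts from the class $e$ whose existence is the issue.

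The paper closes this gap by a homological count against an anti-canonical representative. For a generic $J'\in\cJ_{\cD}$ ($\cD=A\cup B\cup D$, Proposition~\ref{prop:2.4}) take a maximal disjoint family $E_1',\dots,E_N'$ of $J'$-holomorphic $(-1)$-curves in $M\setminus(A\cup D)$. Because the complement of a neighbourhood of $A\cup D$ is a filling of a simple dihedral (or $A_3$) link, the results of \cite{OO2} identify $D-\sum_i E_i'$ as an anti-canonical divisor of $M$. Since $B$ is an embedded $(-1)$-sphere, $c_1(M)[B]=1$, while $B\cdot D=2$, so $\sum_i B\cdot E_i'=1$: there is exactly one $E_i'$ with $B\cdot E_i'=1$ (in particular the family is nonempty). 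One then degenerates from generic $J_n\in\cJ_{\cD}$ to the given $J\in\cJ_{\cC}$, and the limit analysis you sketched (no component meets $D$, Lemma~\ref{lem:2.9}, genericity of $J$ away from $\cC$) extracts the required $J$-holomorphic $(-1)$-curve carrying the intersection with $B$. This anti-canonical identity, which is absent from your proposal, is the key input; it also yields uniqueness immediately, which is how the paper concludes.
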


\begin{proof} Let $\cJ_{\cD}$ denote the set of compatible almost complex
structures with respect to which $\cD=A\cup B\cup D$ is
pseudoholomorphic. By Proposition~\ref{prop:2.4} applied to the
curves $A\cup B\cup D$, for a generic almost complex structure
$J^\pr\in\cJ_{\cD}$, any symplectic $(-1)$-curve in $M$ has a unique
$J^\pr$-holomorphic representative. In particular, for any maximal
disjoint family $E_1,\ldots,E_N$ of symplectic $(-1)$-curves in the
complement of $A\cup D$, we have a unique family of
$J^\pr$-holomorphic representatives $E_1^\pr,\ldots,E_N^\pr$. It
follows, by the results of \cite {OO2}, that an anti-canonical
divisor of $M$ is given by $D-\sum E_i^\pr$. Now since $c_1(M)[B]=1$
and $B\cdot D=2$, it follows that there is exactly one $E_i^\pr$
such that $B\cdot E_i^\pr=1$ and $B\cdot E_j^\pr=0$ if $j\neq i$.
Assume, without loss of generality, that $B\cdot E_1^\pr=1$. Now
consider a sequence of generic almost complex structures
$J_n\in\cJ_{\cD}$ converging to $J$ such that $[E_1^\pr]$ is
represented by a $J_n$-holomorphic curve $B_n$ for each $n$. By the
compactness theorem, $B_n$ converges to the image of a stable map.
Let $A_1,\ldots,A_l$ denote the irreducible components of this
stable map. By the proof of Proposition~4.1 in \cite{OO3}, no
component of this stable map coincides with (or is a multiple cover
of) $D$. Hence, in particular, no component can intersect $D$. Since
$[B]\cdot [E_1^\pr]=1$, there is a component $A_i$ such that
$A_i\cdot B=1$. It follows that $A_i$ is a simple curve. Since $A_i$
is disjoint from $D$, by Lemma~\ref{lem:2.9}, $A_i$ is rational
curve of negative self intersection. Now the fact that $J$ is
generic away from the configuration $\cC$ allows us to conclude that
$A_i$ is in fact a $(-1)$-curve. (The virtual dimension of the
moduli space of singular pseudoholomorphic curves of negative
self-intersection number is negative.) Taking $E=A_i$ gives the
required $J$-holomorphic $(-1)$-curve.

If $E$ and $E'$ are pseudoholomorphic $(-1)$-curves such that $E
\cdot B=E' \cdot B=1$, then, for a generic $J^\pr\in\cJ_{\cD}$, both
$[E]$ and $[E']$ are represented by $J^{\pr}$-holomorphic
$(-1)$-curves. By Lemma~\ref{lem:2.11}, they are mutually disjoint
and we may assume that $E$ and $E'$ are contained in $\{
E_1^\pr,\ldots,E_N^\pr \}$. However, as we saw, there is exactly one
$E_i^\pr$ such that $B\cdot E_i^\pr=1$.  Hence we find uniqueness,
that is, $E=E'$. \qed
\end{proof}

Note that $E$ can intersect at most one of the $C_i$'s (see
Remark~\ref{rem:2.12}). In a similar way, $E \cdot C_i=1$ if $E$
intersects $C_i$. There are now two cases to consider: the case
where $E$ is disjoint from the string $C_1,\ldots, C_k$ and the case
where $E$ intersects precisely one member of the string
$C_1,\ldots,C_k$.

\subsubsection*{{\it Case I: $E\cdot C_i=0$ for all $i$}} In this case,
blow down the $(-1)$-curve $E$ and denote the image of $B$ under the
blowing down map by $B^\pr$. Then $B^\pr$ is a 0-curve and the
resulting configuration $\cC^{\pr}=A\cup B^\pr\cup D\cup
C_1\cup\cdots\cup C_k$ is as in Figure~\ref{fig:case1}.
\begin{figure}
\begin{center}\epsfig{file=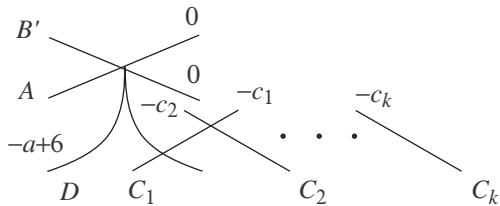}\end{center}
\caption{Image of compactifying divisor after blowing down the
$(-1)$-curve $E$ in Case~I} \label{fig:case1}
\end{figure}
Let $M^{\pr}$ denote the resulting symplectic 4-manifold. We will
show that, after blowing down $(-1)$-curves in $M^{\pr}\setminus
(A\cup B^\pr\cup D)$, the string $C_1,\ldots,C_k$ is transformed
into one which can be sequentially blown down.

Consider, generally, a closed symplectic 4-manifold $Z$ containing a
configuration of rational curves $\cD=A\cup B\cup D\cup
C_1\cup\cdots\cup C_k$ as depicted in Figure~\ref{fig:case1config}.
Here $D$ is a singular curve with a $(2,3)$-cusp, $A$ and $B$ are
embedded 0-curves intersecting transversely at the cusp point of $D$
and $C_1,\ldots,C_k$ are embedded curves. By \cite{OO3}, $D\cdot
D\leq 8$. Also, by Lemma~\ref{lem:2.9} and Lemma~\ref{lem:2.10},
$C_i\cdot C_i\leq -1$ for all $i$.

\begin{figure}
\begin{center} \epsfig{file=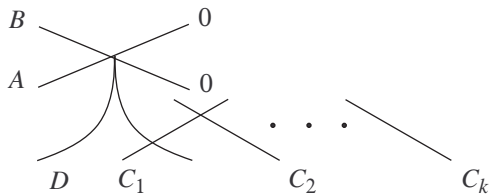} \end{center}
\caption{General configuration of rational curves considered in case
of case~I symplectic fillings of links of tetrahedral,
octahedral and icosahedral singularities of type $(3,1)$}
\label{fig:case1config}
\end{figure}

\begin{lemma} \label{lem:case1} Assume that the string
$C_1,\ldots,C_k$ is nonempty. Let $J$ be a compatible almost complex
structure for which $A,B,D,C_1,\ldots,C_k$ are pseudoholomorphic.
Then $C_1$ is a $(-1)$-curve or there exist a $J$-holomorphic
$(-1)$-curve in $Z\setminus (A\cup B\cup D)$.
\end{lemma}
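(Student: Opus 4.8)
The plan is to mimic the proof of Lemma~\ref{lem:dihed} as closely as possible, the difference being only that the divisor we must stay in the complement of is now $A\cup B\cup D$ rather than $A\cup D$. First I would dispose of the minimal case. Suppose $Z\setminus(A\cup B\cup D)$ is minimal. If $D\cdot D\le 5$, then by the results of \cite{OO2} the complement of a regular neighbourhood of $A\cup D$ (a filling of a simple dihedral singularity or $A_3$) has trivial canonical bundle, so $D-(\text{the }(-1)\text{-curve in }Z\setminus(A\cup D))$ is anti-canonical; in fact, since $Z\setminus(A\cup B\cup D)$ is minimal and $B$ is itself a $(-1)$-curve in $Z\setminus(A\cup D)$ with $B\cdot D=2$, $B$ is the unique such curve, so $D-B$ is anti-canonical and $c_1(Z)[C_1]=C_1\cdot D-C_1\cdot B=1-0=1$, forcing $C_1$ to be a $(-1)$-curve by the adjunction formula. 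If $D\cdot D\ge 6$, I would blow up $3$ points on the smooth part of $D$ away from $C_1$ and from the cusp point (so that $A$, $B$ are untouched), pass to $\widetilde Z$ with proper transform $\widetilde D$, $\widetilde D\cdot\widetilde D\le 5$, and repeat the argument; if the complement of $A\cup B\cup\widetilde D$ is minimal we are done, and otherwise a $(-1)$-curve $E$ there must be disjoint from the exceptional curves $e_1,e_2,e_3$ — for if $E\cdot e_i\ge 1$, blowing down $E$ turns $e_i$ into a curve meeting $\widetilde D$ once only that is either singular (contradicting Lemma~\ref{lem:2.7}) or an embedded $0$-curve (contradicting Lemma~\ref{lem:2.10}) — so $E$ already exists in $Z\setminus(A\cup B\cup D)$, contradicting minimality; thus this sub-case does not arise.

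Next I would treat the non-minimal case. If $Z\setminus(A\cup B\cup D)$ is not minimal, pick a symplectic $(-1)$-curve $E$ there. If its class is represented by a $J$-holomorphic curve we are done, so suppose not, and take tame almost complex structures $J_n\to J$ with $[E]$ represented by $J_n$-holomorphic spheres $E_n$; by Gromov compactness $E_n$ converges to a stable map with irreducible components $A_1,\dots,A_l$. As in the proof of Proposition~4.1 in \cite{OO3}, no $A_i$ coincides with or multiply covers $D$, hence no $A_i$ meets $D$; also, since $E\cdot A=E\cdot B=0$, no $A_i$ meets $A$ or $B$ (an intersection would force $E\cdot A>0$ or $E\cdot B>0$). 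Since $c_1(Z)[E]=1$ there is a component $A_j$ with $c_1(Z)[A_j]>0$; replacing it by its underlying simple curve if it is multiply covered, Lemma~\ref{lem:2.6} shows $A_j$ is an embedded $J$-holomorphic sphere, Lemma~\ref{lem:2.9} gives $A_j\cdot A_j<0$, and the adjunction formula then forces $A_j$ to be a $J$-holomorphic $(-1)$-curve; since it lies in $Z\setminus(A\cup B\cup D)$, this is the curve we want.

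\textbf{The main obstacle} I anticipate is the bookkeeping in the $D\cdot D\ge 6$ sub-case: one must be sure the three blow-up points can be chosen on the smooth locus of $D$ while simultaneously keeping them off $C_1$ and off the cusp point where $A$ and $B$ meet $D$, so that $A$, $B$, $C_1$ and the $(2,3)$-cusp of $\widetilde D$ are all preserved and $\widetilde D\cdot\widetilde D\le 5$ holds; and one must carefully check that the applications of Lemma~\ref{lem:2.7} and Lemma~\ref{lem:2.10} after blowing down $E$ really do apply, i.e.\ that the image of $e_i$ meets $\widetilde D$ exactly once (which follows since $e_i\cdot\widetilde D=0$ and $e_i\cdot E=1$, together with $E\cdot\widetilde D=1$, which holds because $E$ is disjoint from $A\cup B\cup\widetilde D$ only after we know $E\cdot\widetilde D$ — actually $E$ lies in the complement of $A\cup B\cup\widetilde D$, so $E\cdot\widetilde D=0$, and then the image of $e_i$ meets $\widetilde D$ in $e_i\cdot\widetilde D=0$ points, so in fact one instead argues the image of $e_i$ acquires positive self-intersection and contradicts Lemma~\ref{lem:2.9} or is a $0$-curve contradicting Lemma~\ref{lem:2.10}). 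The rest of the argument is a routine adaptation of Lemma~\ref{lem:dihed}.
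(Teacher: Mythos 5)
Your overall skeleton (the minimal case with $D\cdot D\le 5$ via an anti-canonical divisor, the reduction of $D\cdot D\ge 6$ by blowing up three points on the smooth part of $D$, and the Gromov compactness argument in the non-minimal case) is exactly the paper's, which disposes of the minimal case with $D\cdot D\leq 5$ and then says the rest proceeds as in Lemma~\ref{lem:dihed}. However, your treatment of that key minimal case contains a genuine error. In the configuration of this lemma (Figure~\ref{fig:case1config}) the curve $B$ is an embedded $0$-curve, not a $(-1)$-curve: the $(-1)$-curve $E$ with $E\cdot B=1$ supplied by Lemma~\ref{lem:type31} has already been blown down in Case~I, and the ``$B$'' here is its image $B^\pr$. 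Moreover $B$ meets both $A$ and $D$ at the cusp point, so it does not even lie in $Z\setminus(A\cup D)$. Hence your claim that $B$ is ``the unique $(-1)$-curve in $Z\setminus(A\cup D)$'' and that $D-B$ is anti-canonical is false; it is in fact inconsistent with adjunction, since it would give $c_1(Z)[A]=(D-B)\cdot A=2-1=1$, whereas the embedded $0$-sphere $A$ must satisfy $c_1(Z)[A]=2$. What the paper asserts (and what you need) is that when $Z\setminus(A\cup B\cup D)$ is minimal and $D\cdot D\le 5$, the curve $D$ itself represents the anti-canonical class, exactly as in the dihedral case; then $c_1(Z)[C_1]=D\cdot C_1=1$ together with $C_1\cdot C_1\le -1$ forces $C_1$ to be a $(-1)$-curve. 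To reduce to the dihedral statement one must also exclude $(-1)$-curves meeting $B$ but missing $A\cup D$; this can be done (for instance, contracting such a curve turns $B$ into a $+1$-sphere meeting the cuspidal curve only twice, against Lemma~\ref{lem:2.5}, or into a singular curve meeting the $0$-curve $A$ once, against Lemma~\ref{lem:2.10}), but your shortcut through ``$B$ is the unique $(-1)$-curve'' does not address it.

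A smaller point: in your final paragraph you talk yourself out of the correct argument in the $D\cdot D\ge 6$ sub-case. The three blow-up points lie on $D$, so $e_i\cdot\widetilde D=1$, not $0$; with $E\subset\widetilde Z\setminus(A\cup B\cup\widetilde D)$ (so $E\cdot\widetilde D=0$) and $E\cdot e_i\ge 1$, contracting $E$ makes the image of $e_i$ a curve meeting $\widetilde D$ exactly once which is either singular or an embedded $0$-curve, contradicting Lemma~\ref{lem:2.7} or Lemma~\ref{lem:2.10}. That is the version in your first paragraph and in the paper; the ``corrected'' version based on $e_i\cdot\widetilde D=0$ (and the appeal to Lemma~\ref{lem:2.9}) should be discarded, since if the blow-up points were off $D$ the self-intersection of $\widetilde D$ would not drop and the whole reduction would collapse.
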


\begin{proof} Suppose that the complement of $A\cup B\cup D$ is minimal and
$D\cdot D\leq 5$, then it follows from \cite{OO2} that an
anti-canonical divisor of $Z$ is given by $D$. It follows that $C_1$
is a $(-1)$-curve. The remainder of the proof proceeds in a similar
way to proof of Lemma~\ref{lem:dihed}. \qed
\end{proof}

In general, we call a configuration of rational curves $\cD=A\cup
B\cup D\cup C_1\cup\cdots\cup C_k$ as in
Figure~\ref{fig:case1config}, in a closed symplectic 4-manifold $Z$,
{\em admissible} (for case~I symplectic fillings of links of
tetrahedral, octahedral and icosahedral singularities of type
$(3,1)$) if it can be obtained as the total transform of an iterated
blow-up of a union of a cuspidal rational curve of bi-degree $(2,2)$
and two 0-curves intersecting transversely at its cusp point in $\C
P^1\times\C P^1$. Again, we call such a configuration {\em
pre-admissible} if it becomes admissible after possibly blowing down
some $(-1)$-curves intersecting $D$ only.

Assume now that $M^{\pr}\setminus (A\cup B^\pr\cup D\cup
C_1\cup\cdots\cup C_k)$ is minimal. The following proposition shows
that after blowing down a maximal family of $(-1)$-curves in
$M^{\pr}\setminus (A\cup B^\pr\cup D)$ the configuration
$\cC^\pr=A\cup B^\pr\cup D\cup C_1\cup\cdots\cup C_k$ is reduced to
a pre-admissible configuration. Note that, by construction, $C_2,
\dots,C_k$ are not $(-1)$-curves and that $C_1$ intersects $D$. By
Lemma~\ref{lem:2.9}, these $(-1)$-curves are necessarily disjoint.
Again, by Lemma~\ref{lem:2.6}, Lemma~\ref{lem:2.7} and
Remark~\ref{rem:2.12}, any such $(-1)$-curve, if it is not contained
in the string $C_1,\ldots,C_k$, can intersect it at most once. Let
$\cJ_{\cC^{\pr}}$ denote the set of compatible almost complex
structures with respect to which all the irreducible components of
the configuration $\cC^{\pr}$ are pseudoholomorphic.

\begin{proposition} Let $J$ be a compatible almost complex structure
which is generic in $\cJ_{\cC^{\pr}}$. Denote by $M^{\pr\pr}$ the
symplectic $4$-manifold obtained by blowing down all $J$-holomorphic
$(-1)$-curves in $M^{\pr}\setminus (A\cup B^\pr\cup D)$ and by
$C_i^\pr$ the image of $C_i$. Then $\{A,B^\pr,D,C_i^\pr\}$ is a
pre-admissible configuration for a case~I symplectic filling of a
link of a tetrahedral, octahedral or icosahedral singularity of type
$(3,1)$.
\end{proposition}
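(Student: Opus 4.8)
The plan is to follow the scheme of the proof of Proposition~\ref{prop:dihed}, with Lemma~\ref{lem:case1} playing the role of Lemma~\ref{lem:dihed}. First I would record the key elimination step: for generic $J\in\cJ_{\cC^\pr}$, any $J$-holomorphic $(-1)$-curve in $M^{\pr\pr}\setminus(A\cup B^\pr\cup D)$ that is not one of the $C_i^\pr$ must, by Lemma~\ref{lem:2.6}, Lemma~\ref{lem:2.7} and Remark~\ref{rem:2.12} (exactly as in the paragraph preceding the statement), meet the string $C_1^\pr,\ldots,C_k^\pr$ in at most one point and transversally; hence its preimage is a $(-1)$-curve already present in $M^\pr\setminus(A\cup B^\pr\cup D)$, contradicting the maximality of the family that was contracted. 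So every $(-1)$-curve in $M^{\pr\pr}\setminus(A\cup B^\pr\cup D)$ is one of the $C_i^\pr$. Since by construction $C_2,\ldots,C_k$ are not $(-1)$-curves and $C_1$ meets $D$, Lemma~\ref{lem:case1} then forces $C_1^\pr$ to be a $(-1)$-curve as long as the string is nonempty.

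Next I would run the induction. Contract $C_1^\pr$: this yields a configuration of the same shape with a strictly shorter string (if the complement fails to be minimal, first strip off a further maximal disjoint family of $(-1)$-curves meeting only $D$, which is legitimate by Lemma~\ref{lem:2.9} and the elimination step above), and the argument repeats. After finitely many steps the whole string $C_1,\ldots,C_k$ has been contracted and one is left with a configuration $A\cup B'\cup D$. Since the complement of a regular neighbourhood of $A\cup D$ is a symplectic filling of a simple dihedral singularity (or of $A_3$, treated as ``$D_3$''), the results of \cite{OO2} identify this with the compactifying picture of such a filling --- after blowing up at most three points on the smooth part of $D$, chosen away from the remaining curves, if $D\cdot D$ is too large (here $D\cdot D\le 8$ by \cite{OO3}). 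In that picture one exhibits $J$-holomorphic $(-1)$-curves meeting $D$ once transversally whose contraction produces the union of a cuspidal rational curve of bi-degree $(2,2)$ and two $0$-curves meeting transversely at the cusp point in $\C P^1\times\C P^1$, i.e. the admissible model for a case~I filling.

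Finally I would reassemble the picture. Every $(-1)$-curve contracted above --- both those coming from the string and those needed to reach the $\C P^1\times\C P^1$ model --- descends from a curve present in $M^\pr$ (by the lifting argument, with disjointness supplied by Lemma~\ref{lem:2.9}). Reordering the blow-downs so that all the ones meeting only $D$ are performed last, one sees directly that $\{A,B^\pr,D,C_i^\pr\}$ becomes admissible after contracting $(-1)$-curves that meet $D$ only; that is precisely the definition of a pre-admissible configuration. The case of an empty string $C_1,\ldots,C_k$ is the base of the induction, handled by the same $\C P^1\times\C P^1$ reduction applied directly to $A\cup B^\pr\cup D$.

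The main obstacle is the bookkeeping in the last paragraph: one must verify that no $(-1)$-curve meeting both $D$ and some $C_i^\pr$ (which after contraction would create a nodal or reducible image barred by Lemma~\ref{lem:2.6} or Remark~\ref{rem:2.12}) is ever used, so that the reordered sequence of blow-downs genuinely separates the curves meeting only $D$ from the rest --- this is exactly what upgrades ``admissible after some sequence of blow-ups and blow-downs'' to the sharp statement that $\{A,B^\pr,D,C_i^\pr\}$ is pre-admissible. A secondary technical point is to confirm, using $D\cdot D\le 8$ together with the fact that $D\cdot D$ is forced to be reasonably large when the string is nonempty, that at most three auxiliary blow-ups on the smooth locus of $D$ suffice and can be placed away from the $C_i$.
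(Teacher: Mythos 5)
Your proposal is correct and follows essentially the paper's intended argument: the paper gives no separate proof for this proposition, leaving it as an adaptation of the proof of Proposition~\ref{prop:dihed} with Lemma~\ref{lem:case1} playing the role of Lemma~\ref{lem:dihed}, which is precisely what you carry out (lifting any extraneous $(-1)$-curve back to $M^{\pr}$ to contradict maximality, iteratively contracting the string, reducing to the $\C P^1\times\C P^1$ model via \cite{OO2} with at most three auxiliary blow-ups on the smooth part of $D$, and reordering the blow-downs to conclude pre-admissibility).
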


\subsubsection*{{\it Case II: $E\cdot C_i=1$ for some $i$}} Again,
begin by blowing down $E$. Denote the resulting symplectic
4-manifold by $M^{\pr}$, the image of $B$ by $B^\pr$ and the image
of $C_j$ by $C_j^\pr$ for $j=1,\ldots,k$. Then $B^\pr$ is a 0-curve,
$C_i^\pr\cdot C_i^\pr=-c_i+1$ and the resulting configuration
$\cC^{\pr}=A\cup B^\pr\cup D\cup C_1^\pr\cup\cdots\cup C_k^\pr$ is
as in Figure~\ref{fig:case2}. As in other cases, we blow down some
pseudoholomorphic $(-1)$-curves and reduce the compactifying divisor
to a standard form. For each pseudoholomorphic $(-1)$-curve $F$ in
the complement of $A \cup B'$, $F$ intersects at most one of the
$C_j^\pr$.  Moreover their intersection number is $1$. After
contracting such $(-1)$-curves, $C_1^\pr \dots , C_k^\pr$ remain
symplectically embedded spheres, whose dual graph is a string. The
image $C_j^{\pr \pr}$ of $C_j^\pr$ ($j \neq i$) is contained in the
complement of $A \cup B'$, hence has negative self-intersection
number. Note that $F$ may also intersect $D$. In such a case, we
have $F \cdot D =1$.  If $F \cdot D > 1$, after contracting $F$, the
image of $D$ contains at least two singular points.  However, the
intersection with $A$ (resp.\ $B'$) remains the same.  Thus, after
contracting other $(-1)$-curves in the complement of $A \cup B'$,
the image of $D$ should represent a homology class of bi-degree
$(2,2)$. This is absurd.

\begin{figure}
\begin{center}\epsfig{file=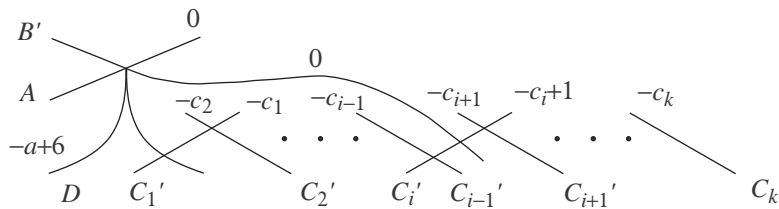}\end{center}
\caption{Image of compactifying divisor after blowing down the
$(-1)$-curve $E$ in Case~II} \label{fig:case2}
\end{figure}

Consider now, generally, a closed symplectic 4-manifold $Z$
containing a configuration of rational curves $\cD=A\cup B\cup D\cup
C_1\cup\cdots\cup C_k$ intersecting each other as in
Figure~\ref{fig:case2config}, however with the possibility that
there might be more intersections between the string $C_1, \dots,
C_k$ and the singular curve $D$ than indicated in the figure. Here
$D$ is a singular curve with a $(2,3)$-cusp, $A$ and $B$ are
embedded 0-curves intersecting transversely at the cusp point of $D$
and $C_1,\ldots,C_k$ are embedded curves. By \cite{OO3}, $D\cdot
D\leq 8$. Also, by Lemma~\ref{lem:2.9} and Lemma~\ref{lem:2.10},
$C_j\cdot C_j\leq -1$ for all $j$.

\begin{figure}
\begin{center}\epsfig{file=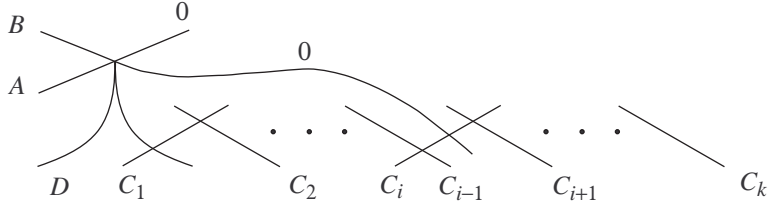}\end{center}
\caption{General configuration of rational curves considered in case
of case~II symplectic fillings of links of tetrahedral,
octahedral and icosahedral singularities of type $(3,1)$}
\label{fig:case2config}
\end{figure}

\begin{lemma} \label{lem:case2}
Let $J$ be a compatible almost complex structure for which the
irreducible components of the configuration $\cD$ are
pseudoholomorphic. Then there exist a $J$-holomorphic $(-1)$-curve
in $Z\setminus (A\cup B)$, unless $k=1$, $C_1 \cdot C_1 = 0$ and $D
\cdot D = 8$.
\end{lemma}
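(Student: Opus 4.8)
The strategy mirrors the proofs of Lemma~\ref{lem:dihed} and Lemma~\ref{lem:case1}, but now the configuration carries \emph{two} $0$-curves $A$ and $B$ meeting at the cusp point of $D$, so the constraint $c_1(Z)\cdot(\text{something})$ is tighter and produces the exceptional case. First I would dispose of the case where $Z\setminus(A\cup B)$ is \emph{not} minimal: then a symplectic $(-1)$-class $e$ exists that is disjoint from $A\cup B$; if $e$ is $J$-holomorphic we are done, and otherwise we degenerate a nearby $J_n$-holomorphic representative to a $J$-stable map, extract (as in Proposition~4.1 of \cite{OO3}) a component $A_j$ with $c_1(Z)[A_j]>0$ which cannot meet or cover $D$, replace it by its underlying simple curve, invoke Lemma~\ref{lem:2.6} to see it is an embedded sphere, Lemma~\ref{lem:2.9} to see it has negative self-intersection, and the adjunction formula to conclude it is a $J$-holomorphic $(-1)$-curve in $Z\setminus(A\cup B)$. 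Note that $A_j$ being disjoint from $D$ is precisely what forces it into $Z\setminus(A\cup B\cup D)$, hence a fortiori into $Z\setminus(A\cup B)$; that $A_j$ is disjoint from both $A$ and $B$ follows because $e\cdot A=e\cdot B=0$ and all components of the stable map have nonnegative intersection with the embedded spheres $A,B$.

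**The minimal case.** So assume $Z\setminus(A\cup B)$ is minimal. Here the key is the anti-canonical control coming from \cite{OO2}: after blowing up at most a few points on the smooth part of $D$ (away from $A$, $B$ and the $C_j$), a regular neighbourhood of $A\cup B\cup D$ — or rather of $A\cup D$ together with $B$ — is a compactification of a symplectic filling of a simple dihedral singularity (or $A_3$), so the proper transform $\widetilde D$ represents (minus) the canonical class on the blown-up manifold. Concretely: if $D\cdot D\le 5$ the results of \cite{OO2} give that an anti-canonical divisor of $Z$ is represented by $D$ itself; if $D\cdot D\ge 6$ we blow up $D\cdot D-5$ points on the smooth locus of $D$, away from the $C_j$, get $\widetilde D$ with $\widetilde D\cdot\widetilde D\le 5$, and argue that any new $(-1)$-curve in the complement of $A\cup B\cup\widetilde D$ must be disjoint from the exceptional curves (else contracting it creates a singular or embedded $0$-curve meeting $\widetilde D$ exactly once, contradicting Lemma~\ref{lem:2.7} or Lemma~\ref{lem:2.10}), hence descends to an honest $(-1)$-curve in $Z\setminus(A\cup B\cup D)$, contradicting minimality.

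**Extracting the exceptional case.** Once $D$ (or $\widetilde D$) is anti-canonical, I compute $c_1$ against the relevant classes. We have $c_1(Z)[C_1]=C_1\cdot D\ge 1$, so if $C_1\cdot D\ge 2$ then $C_1$ has $c_1(Z)[C_1]\ge 2$; combined with $C_1\cdot C_1\le -1$ (from Lemmas~\ref{lem:2.9},~\ref{lem:2.10}) and the adjunction/genus formula this already forces something. But the sharper point uses $A$ and $B$: since $A\cdot A=B\cdot B=0$ and $A,B$ each meet $D$, we get $c_1(Z)[A]=c_1(Z)[B]=2$, and $A+B+D$ together with the string $C_1,\dots,C_k$ (with their negative self-intersections) must fit inside $Z$ with $b_2^+(Z)=1$. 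Tracking $D\cdot D$ under the blow-ups and pushing the inequality $D\cdot D\le 8$ (Theorem~\ref{thm:Ohta-Ono}, adjusted for the two extra $0$-curve intersections) against the constraint that the string be nonempty and admissible, one sees that a $J$-holomorphic $(-1)$-curve in $Z\setminus(A\cup B)$ must exist \emph{except} when the string has collapsed to a single curve $C_1$ with $C_1\cdot C_1=0$ and $D\cdot D$ is maximal, namely $D\cdot D=8$ — in that borderline configuration every candidate $(-1)$-class is obstructed because the homology/$c_1$ budget is exhausted. I would isolate this by assuming no such $(-1)$-curve exists, running the anti-canonical argument to pin down $c_1(Z)$, and then showing the intersection form on $Z\setminus(A\cup B)$ together with $c_1$-positivity of $A,B,D$ leaves exactly the stated numerical possibility.

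**Main obstacle.** The delicate part is the bookkeeping in the minimal case: showing that the degeneration argument produces a $(-1)$-curve \emph{disjoint from $A$ and $B$} (not merely from $D$), and then proving that the \emph{only} way this can fail is the single arithmetic configuration $k=1$, $C_1\cdot C_1=0$, $D\cdot D=8$. This requires carefully combining the anti-canonical identification from \cite{OO2}, the bound $D\cdot D\le 8$ of Theorem~\ref{thm:Ohta-Ono}, the sign constraints $C_j\cdot C_j\le -1$, and the fact that $b_2^+=1$ on a rational surface; the temptation will be to overlook a configuration where a $(-1)$-curve meets $D$ but not $A,B$, which must be handled (as in the Case~II discussion preceding the lemma) by noting $F\cdot D=1$ in that situation and tracking the bi-degree of the image of $D$ after all contractions.
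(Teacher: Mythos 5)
Your second step (converting a symplectic $(-1)$-class in $Z\setminus(A\cup B)$ into a $J$-holomorphic $(-1)$-curve via Gromov compactness, Proposition~\ref{prop:2.4}, Lemma~\ref{lem:2.6}, Lemma~\ref{lem:2.9} and adjunction) is exactly how the paper finishes, following the proof of Lemma~\ref{lem:dihed}. The gap is in the other half, which is the actual content of the lemma: you never prove that minimality of $Z\setminus(A\cup B)$ forces precisely $k=1$, $C_1\cdot C_1=0$ and $D\cdot D=8$. Your ``extracting the exceptional case'' paragraph only asserts that ``tracking $D\cdot D$'' and exhausting a ``homology/$c_1$ budget'' will isolate this triple; no argument is given, and you yourself flag this as the main obstacle. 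Moreover, the anti-canonical input you import from \cite{OO2} (as in Lemma~\ref{lem:dihed}) is invoked under the wrong hypothesis: that statement requires minimality of the complement of $A\cup D$ (the compactifying divisor), whereas you are only assuming minimality of $Z\setminus(A\cup B)$. In the Case~II situation these are genuinely different, because $(-1)$-curves meeting $D$ but disjoint from $A\cup B$ are exactly the curves in play (this is what Claim~1 of Proposition~\ref{prop:case2} is about), so your reduction to ``$D$ is anti-canonical'' does not go through as stated.

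The missing idea is the structure result the paper uses instead: by \cite{OO2}, blowing down a maximal family of $(-1)$-curves in $Z\setminus(A\cup B)$ reduces $Z$ to $\C P^1\times\C P^1$ with the image of $D$ a cuspidal rational curve of bi-degree $(2,2)$. If $Z\setminus(A\cup B)$ were minimal, $Z$ would already be $\C P^1\times\C P^1$; then $D\cdot D=8$ is forced, any $C_j$ disjoint from $A\cup B$ would have negative self-intersection (required by Lemmas~\ref{lem:2.9} and \ref{lem:2.10}) which is impossible in $\C P^1\times\C P^1$, so $k\le 1$; and a single $C_1$ disjoint from $A$ must be homologous to a fibre, whence $C_1\cdot C_1=0$. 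This clean homological argument in $\C P^1\times\C P^1$ is what pins down the exceptional case (which really occurs, namely the admissible configuration of Figure~\ref{fig:admisscase2}), and it is absent from your proposal.
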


\begin{proof} By \cite{OO2}, we know that after collapsing a maximal family
of $(-1)$-curves in the complement of $A\cup B$ the manifold $Z$ is
reduced to $\C P^1\times\C P^1$ with the image $D$ being a
pseudoholomorphic cuspidal rational curve of bi-degree $(2,2)$. If
$k >1$, there are at least two $C_j$, one of which is contained in
the complement of $A \cup B$. Hence its self-intersection number is
negative. Suppose that $C_1$ is the only member of the string and
$C_1 \cdot C_1 \neq 0$. Since $C_1$ does not intersect $A$, the
self-intersection number of $C_1$ must be negative. Otherwise the
self-intersection number of $D$ is less than 8. In each case, we
find that the complement of $A \cup B$ is not minimal. The proof now
proceeds in a similar way to the proof of Lemma~\ref{lem:dihed}.
\qed
\end{proof}

In general, we say that a configuration of rational curves
$\cD=A\cup B\cup D\cup C_1\cup\cdots\cup C_k$ in a closed symplectic
4-manifold $Z$, where $A\cup B\cup D$ are as in
Figure~\ref{fig:case2config} and $C_1, \dots, C_k$ is a string of
embedded curves, is {\em admissible} (for case~II symplectic
fillings of links of tetrahedral, octahedral or icosahedral
singularities of type $(3,1)$) if it can be obtained as a total
transform, under an iterated blow-up, of a configuration
$\cD^\pr=A\cup B\cup C\cup D^\pr$ in $\C P^1\times\C P^1$,
intersecting as depicted in Figure~\ref{fig:admisscase2}. Here
$D^\pr$ is a cuspidal rational curve of bi-degree $(2,2)$ and
$A,B,C$ are ruling fibres. (Note that $D$ necessarily intersects the
string $C_1\cup\cdots\cup C_k$ twice in an admissible
configuration).

\begin{figure}
\begin{center}\epsfig{file=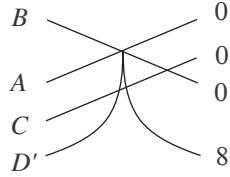}\end{center}
\caption{Arrangement of curves in $\C P^1\times\C P^1$ giving rise
to admissible configurations for case~II symplectic fillings
of links of tetrahedral, octahedral or icosahedral singularities of
type $(3,1)$} \label{fig:admisscase2}
\end{figure}

Assume now that the complement of the configuration $\cC^{\pr}=A\cup
B^\pr\cup D\cup C_1^\pr\cup\cdots\cup C_k^\pr$ in $M^{\pr}$ is
minimal and let $\cJ_{\cC^{\pr}}$ denote the set of compatible
almost complex structures with respect to which $\cC^{\pr}$ is
pseudoholomorphic.

\begin{proposition} \label{prop:case2} Let $J$ be a compatible almost
complex structure which is generic in $\cJ_{\cC^{\pr}}$. Denote by
$M^{\pr\pr}$ the symplectic $4$-manifold obtained by blowing down
all $J$-holomorphic $(-1)$-curves in $M^{\pr}\setminus (A\cup
B^\pr)$, by $D^\pr$ the image of $D$ and by $C_l^{\pr\pr}$ the image
of $C_l^\pr$. Then $\{A,B^\pr,D^\pr,\{C_l^{\pr\pr}\}\}$ is an
admissible configuration for a case II symplectic filling of a link
of a tetrahedral, octahedral or icosahedral singularity of type
$(3,1)$.
\end{proposition}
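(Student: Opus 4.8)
The plan is to argue exactly as in the proofs of Proposition~\ref{prop:dihed} and Proposition~\ref{prop:cycquot}, by successively contracting $(-1)$-curves in the complement of $A\cup B^\pr$; the extra room gained by excluding only $A\cup B^\pr$ from the complement (rather than $A\cup B^\pr\cup D$) is precisely what lets us land on an admissible rather than merely a pre-admissible configuration.

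First I would pin down the configuration after the maximal contraction and set up the lifting argument. Here $M^{\pr\pr}$ is obtained from $M^\pr$ by blowing down a maximal disjoint family of $J$-holomorphic $(-1)$-curves in $M^\pr\setminus(A\cup B^\pr)$ none of which lies in the configuration $\cC^\pr$; by the observations preceding the statement, each such contraction keeps the configuration in the shape of Figure~\ref{fig:case2config} --- the string remains a string of embedded spheres, $D$ remains a $(2,3)$-cuspidal rational curve, and any contracted curve meets $D$ at most once. The standard lifting argument (perturb $J$ away from the configuration so that a prospective $(-1)$-curve avoids the images of the contracted curves, then lift it back) then shows that every $J$-holomorphic $(-1)$-curve in $M^{\pr\pr}\setminus(A\cup B^\pr)$ must be one of the $C_l^{\pr\pr}$, as otherwise it would extend the contracted family and contradict its maximality; moreover, by the observations and Remark~\ref{rem:2.12}, such a curve meets $D^\pr$ at most once and the rest of the string at most once.

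Next I would run the reduction of the string by means of Lemma~\ref{lem:case2}. If at some stage the configuration has $k=1$, with the single remaining string curve of self-intersection $0$ and $D\cdot D=8$, then $D^\pr$ is a rational $(2,3)$-cuspidal curve of self-intersection $8$ meeting three symplectically embedded $0$-spheres; by Theorem~\ref{thm:McDuff} and the argument of \cite{OO2} recalled in the proof of Lemma~\ref{lem:case2}, the ambient manifold is $\C P^1\times\C P^1$, $D^\pr$ has bi-degree $(2,2)$, and $A$, $B^\pr$ and the string curve are ruling fibres, i.e.\ the configuration is exactly $\cD^\pr$ of Figure~\ref{fig:admisscase2}, and the process stops. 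Otherwise Lemma~\ref{lem:case2} produces a $J$-holomorphic $(-1)$-curve in the complement of $A\cup B^\pr$; by the previous paragraph it is one of the current string curves, and contracting it yields a configuration of the same shape with strictly smaller second Betti number, while the lifting argument again guarantees that in the new manifold every $(-1)$-curve in the complement of $A\cup B^\pr$ is a string curve, so the step may be repeated. Since the second Betti number strictly decreases the process terminates, and by the dichotomy it can only terminate at $\cD^\pr$ in $\C P^1\times\C P^1$; reading the chain of contractions backwards then exhibits $\cC^{\pr\pr}$ as the total transform of $\cD^\pr$ under an iterated blow-up, which is the definition of admissibility.

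The step I expect to be the main obstacle is the bookkeeping in this reduction: one must verify, using the observations before the statement together with Lemma~\ref{lem:2.6}, Lemma~\ref{lem:2.7}, Lemma~\ref{lem:2.9}, Lemma~\ref{lem:2.10} and Remark~\ref{rem:2.12}, that none of the contractions ever produces a string curve meeting $D$ more than once, collapses the string below a single $0$-curve, or leaves $D\cdot D<8$ once the string has been reduced to a single curve --- so that Lemma~\ref{lem:case2} applies verbatim at every stage and the terminal configuration is genuinely $\cD^\pr$ and not some degenerate variant. \qed
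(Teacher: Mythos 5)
There is a genuine gap: your reduction never controls how many of the contracted $(-1)$-curves meet \emph{both} $D$ and the string, and this is precisely the content the paper's proof is built around. An admissible configuration requires the string to meet $D$ exactly twice (the image of the string must become the ruling fibre $C$ of Figure~\ref{fig:admisscase2}, and a fibre meets a $(2,2)$-curve twice), whereas in $\cC^\pr$ the string may meet $D$ only once. The only way the count can change is by contracting a $(-1)$-curve $F\subset M^\pr\setminus(A\cup B^\pr)$ with $F\cdot D=1$ and $F\cdot C_j^\pr=1$. The paper proves three claims: such an $F$ exists (otherwise, after contracting everything and then the string curves $C_l$, $l\neq i$, one reaches the picture of Figure~\ref{fig:step1}, where the leftover curve would have to be homologous to $A$ yet meets $D$ only once, giving a contradiction via Lemma~\ref{lem:case2}); such an $F$ is unique (two of them would force the image of the string to meet $D$ at least three times, and one could not reach a $(2,2)$-curve in $\C P^1\times\C P^1$, cf.\ Figure~\ref{fig:step3}); and the curve $C_j$ it meets satisfies $j\geq i$ (otherwise the image of $D$ acquires a second singular point, again incompatible with a cuspidal $(2,2)$-curve). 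Only with ``exactly twice, and at the right places'' established can one iteratively blow down the $C_l^{\pr\pr}$, $l\neq i$, and land on Figure~\ref{fig:admisscase2}.

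Your argument instead treats the problem as in Proposition~\ref{prop:dihed}: blow down string curves supplied by Lemma~\ref{lem:case2} until the exceptional case $k=1$, $C_1\cdot C_1=0$, $D\cdot D=8$ occurs, and then assert that ``by the dichotomy it can only terminate at $\cD^\pr$.'' That assertion is exactly what is unproved: the exceptional case of Lemma~\ref{lem:case2} does not say the remaining string curve meets $D^\pr$ twice, so the terminal picture could a priori be a non-admissible variant (string meeting $D$ once, or three or more times, or $D$ with an extra singular point), and no contradiction is derived in those situations. Indeed your closing paragraph concedes that this bookkeeping is ``the main obstacle''---but that bookkeeping is not a routine verification from Lemmas~\ref{lem:2.7}, \ref{lem:2.9} and Remark~\ref{rem:2.12}; it is the substance of the paper's Claims~1--3, each requiring a separate global argument using the fact that the fully blown-down model must be $\C P^1\times\C P^1$ with $D$ of bi-degree $(2,2)$. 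As written, the proposal reproduces the easy outer loop of the paper's proof but omits its core.
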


\begin{proof} {\bf Claim 1.} There is a $J$-holomorphic $(-1)$-curve $F$ in
$M^{\pr}\setminus (A\cup B^\pr)$ such that $F\cdot D=1$, $F\cdot
C_j^\pr=1$ for some $j$, $F\cdot C_l^\pr=0$, $l\neq j$.
\smallskip

\noindent {\it Proof of Claim 1.} Suppose that there is no such
curve $F$, then all $J$-holomorphic $(-1)$-curves in
$M^{\pr}\setminus (A\cup B^\pr)$ are disjoint from $D$. After
blowing down all such $(-1)$-curves, denote the image of $D$ by
$D^\pr$ and the image of $C_l^\pr$ by $C_l^{\pr\pr}$. By
Lemma~\ref{lem:case2}, there exists a $(-1)$-curve away from $A\cup
B^\pr$ in the resulting symplectic 4-manifold $M^{\pr\pr}$. Arguing
as in other cases, any such $(-1)$-curve must be one of the
$C_l^{\pr\pr}$, $l\neq i$. After iteratively blowing down all such
$(-1)$-curves we arrive at the situation depicted in
Figure~\ref{fig:step1}. But this situation can not be minimal since
if it were $C_i^{\pr\pr\pr}$ would have to be homologous to $A$ and
hence would have to intersect $D^{\pr\pr}$ twice. Hence, again by
Lemma~\ref{lem:case2}, there must be a $(-1)$-curve in the resulting
symplectic 4-manifold $M^{\pr\pr\pr}$ away from $A\cup B^\pr$. But
this $(-1)$-curve must have already existed in $M^\pr\setminus
(A\cup B^\pr)$, which is a contradiction since we are assuming that
we blew down all such $(-1)$-curves.

\begin{figure}
\begin{center}\epsfig{file=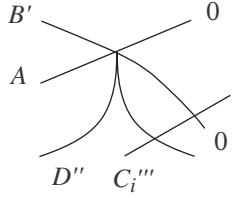}\end{center}
\caption{See Proof of Claim 1} \label{fig:step1}
\end{figure}

\smallskip \noindent
{\bf Claim 2.} Let $F$ be as in Claim~1. Then $j\geq i$.

\smallskip
\noindent {\it Proof of Claim 2.} Suppose $j<i$, then, after blowing
down all $(-1)$-curves in $M^{\pr}\setminus (A\cup B^\pr)$, let
$M^{\pr\pr}$ denote the resulting symplectic 4-manifold and denote
by $D^\pr$ the image of $D$ and by $C_l^{\pr\pr}$ the image of
$C_l^\pr$. Arguing as in the proof of Claim~1, we can now
iteratively blow down all the curves $C_l^{\pr\pr}$, $l\neq i$. Let
$M^{\pr\pr\pr}$ denote the resulting symplectic 4-manifold and
denote by $D^{\pr\pr}$ the image of $D^{\pr}$. Since the image of
the string $C_1^\pr,\ldots,C_{i-1}^\pr$ in $M^{\pr\pr}$ intersects
$D^\pr$ at least twice, $D^{\pr\pr}$ will have a singular point away
from the cusp point. Now, after contracting a maximal family of
pseudoholomorphic $(-1)$-curves in $M^{\pr\pr\pr}\setminus (A\cup
B^{\pr})$, we obtain a (2,2)-curve in $\C P^1\times\C P^1$ with at
least two singular points, which is impossible.

\smallskip \noindent
{\bf Claim 3.} There is at most one such curve $F$ as in Claim~1.

\smallskip
\noindent {\it Proof of Claim 3.} Suppose there is more than one
such curve and denote these curves $F_1,\ldots,F_s$. Assume that
$F_l\cdot C_{j(l)}^\pr=1$ for $l=1,\ldots, s$. Then, by Claim~2,
$j(l)\geq i$ for all $l$. First suppose that
$\sharp\{l\,|\,j(l)>i\}\geq 2$. Then after contracting all
$(-1)$-curves in $M^{\pr}\setminus (A\cup B^\pr)$, the image of the
string $C_{i+1}^\pr,\ldots C_k^\pr$ in the resulting symplectic
4-manifold $M^{\pr\pr}$ intersects the image of $D$ at least twice.
The proof is now as in the proof of Claim~2. Now suppose that
$\sharp\{l\,|\,j(l)>i\}\leq 1$. Then, after contracting all
$(-1)$-curves in $M^{\pr}\setminus (A\cup B^\pr)$ and then
iteratively contracting the images of the curves $C_l^\pr$, $l\neq
i$, denote by $M^{\pr\pr\pr}$ the resulting symplectic 4-manifold,
by $D^{\pr\pr}$ the image of $D$ and by $C_i^{\pr\pr\pr}$ the image
of $C_i^\pr$ (see Figure~\ref{fig:step3} for the case $s=2$). Note
that this situation does not occur in $\C P^1\times \C P^1$ since
$C_i^{\pr\pr\pr}$ is a smoothly embedded rational curve which is
disjoint from $A$ and hence must be homologous to $A$ but
$C_i^{\pr\pr\pr}\cdot D^{\pr\pr}\geq 3$. Since blowing down
$(-1)$-curves away from $A\cup B^\pr$ can only increase the
intersection number $C_i^{\pr\pr\pr}\cdot D^{\pr\pr}$, it follows
that $M^{\pr\pr\pr}$ can also not be a blow-up of $\C P^1\times \C
P^1$, which is absurd.

\smallskip
We prove the proposition. After contracting all $(-1)$-curves in
$M^\pr\setminus (A\cup B^\pr)$ it follows from Claim~3 that the
image of the string $C_1^\pr,\ldots,C_k^\pr$ intersects the image of
$D$ exactly twice. Namely, $C_1^{\pr\pr}$ and $C_j^{\pr\pr}$ for
some $j\geq i$ intersect $D^\pr$. One can now, using
Lemma~\ref{lem:case2}, iteratively blow down the curves
$C_l^{\pr\pr}$, $l\neq i$ to obtain the configuration given in
Figure~\ref{fig:admisscase2}. This shows that the configuration
$\{A,B^\pr,D^\pr,\{C_l^{\pr\pr}\}\}$ is an admissible configuration.
\qed
\end{proof}

\begin{figure}
\begin{center}\epsfig{file=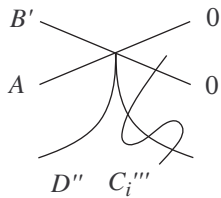}\end{center}
\caption{See Proof of Claim 3} \label{fig:step3}
\end{figure}


\section{Conclusion}
\label{sec:5} In Section~\ref{sec:4}, we reduced the
compactification to a standard configuration of rational curves in
either $\C P^2$ or $\C P^1 \times \C P^1$ as follows.
\begin{enumerate}[\rm (1)]
\item Cyclic quotient singularities: two distinct lines in $\C P^2$.
\item Dihedral singularities: a union of a $0$-curve and a
cuspidal curve of bi-degree $(2,2)$) in $\C P^1 \times\C P^1$ which
intersect at the cusp point, or the proper transform of a union of a
line and a cuspidal cubic curve in $\C P^2$ which meet at the cusp
point under the blow-up of their transversal intersection point,
that is, another intersection point of them.
\item Tetrahedral, Octahedral and Icosahedral singularities of type
$(3,2)$: a cuspidal curve of degree 3 (resp.\ of bi-degree $(2,2)$)
in $\C P^2$ (resp.\ $\C P^1 \times \C P^1$).
\item Tetrahedral, Octahedral and Icosahedral singularities of type
$(3,1)$:  two kinds of configurations appear:
\begin{enumerate}[\rm (i)]
\item a union of $\C P^1 \times \{\pt\}$, $\{\pt \} \times \C P^1$ and
a cuspidal curve of bi-degree $(2,2)$, which meet at the cusp point,
\item a union of $\C P^1 \times \{\pt\}$, $\{\pt \} \times \C
P^1$, a cuspidal curve of bi-degree $(2,2)$, which meet at the cusp
point, and another rational curve homologous to $\C P^1 \times
\{\pt\}$.
\end{enumerate}
\end{enumerate}

To recover the symplectic filling $X$, we first sequentially blow up
the manifold at points on the total transform of the divisor in the
above list to get a closed symplectic 4-manifold $Z$. Then we get
$X$ as the complement of a regular neighbourhood of the
compactifying divisor $K$ in $Z$. For classification up to
symplectic deformation equivalence, we need uniqueness of symplectic
deformation types of the standard configurations, which we can prove
as in \cite{OO2}.

It is not difficult to see that for cyclic quotient singularities
and dihedral singularities we can find links with arbitrarily many
nondiffeomorphic symplectic fillings. (For cyclic quotient
singularities, this fact was noted by Lisca~\cite{L}.) However for
tetrahedral, octahedral and icosahedral singularities, the number of
symplectic fillings for each class of singularities in Tables 1, 2,
3 and 4 is bounded above by a number independent of $b$. We give a
list of all symplectic fillings in these cases. To aid this, for the
case (4ii) above, we note the following constraints:
\begin{enumerate}[\rm (a)]
\item if $i>1$, then $c_i\neq 2$,
\item if $j<k$, then $c_j\neq 2$,
\item $b\leq \max\{5,c_{b-2}\}$,
\end{enumerate}
where $i$ is as in Figure~\ref{fig:case2} and $j$ is as in the proof
of Proposition~\ref{prop:case2}. In particular, since $c_{b-2}\leq
6$ for quotient singularities, there are only a finite number of
symplectic filling which fall into case (4ii) above.

The list we give below is the list of compactifications $Z$ and
compactifying divisors $K$ of minimal symplectic fillings.  There
may be symplectically deformation equivalent fillings in the list.
To get a list of minimal symplectic fillings, we should describe the
contactomorphisms up to contact isotopies. We leave this as a topic
for future research.

\subsubsection*{{\it Symplectic fillings of links of Tetrahedral,
Octahedral and Icosahedral singularities of type $(3,2)$}}

We use the notation $(m;D\cdot D,-c_1,\ldots,-c_k;a_1\times
i_1,\ldots,a_l\times i_l)$ to denote the symplectic filling of the
link of $T_m,O_m$ or $I_m$ given as the complement of a regular
neighbourhood of the compactifying divisor $K=D\cup
C_1\cup\cdots\cup C_k$ given in Figure~\ref{fig:2-2blconfig}. Here
$-c_1,\ldots,-c_k$ denote the self-intersections of the curves
$C_1,\ldots,C_k$ and $a_j\times i_j$ denotes the existence of $a_j$
distinct $(-1)$-curves intersecting $C_{i_j}$ in $Z$. We abbreviate
$1\times i_j=i_j$. In each case we indicate whether the pair $(Z,K)$
is given by blowing up $(\C P^2,$ cuspidal cubic curve of degree
$3)$ or $(\C P^1\times \C P^1,$ cuspidal curve of bi-degree
$(2,2))$. Note that when we blow down the compactification $Z$ of a
symplectic filling of the link of a singularity of type $(3,2)$ we
can always guarantee that we end up with $\C P^2$ unless the image
of $D$ under the blowing down map has self-intersection number 8. In
that case we may also end up with $\C P^1\times \C P^1$. There are
16 cases where this occurs.

\subsubsection*{{\it Tetrahedral, $T_m$}}
$\mbox{}$\\
1. $E_6$\\
2. $(6(3-2)+1;5,-4;3\times 1),\,\C P^2$\\
3. $(6(4-2)+1;5,-2,-4;3\times 2),\,\C P^2$\\
4. $(6(4-2)+1;5,-2,-4;1,2\times 2),\,\C P^2$\\
5. $(6(5-2)+1;5,-2,-2,-4;3\times 3),\,\C P^2$\\
6. $(6(5-2)+1;5,-2,-2,-4;1,2\times 3),\,\C P^2$\\
7. $(6(5-2)+1;5,-2,-2,-4;1,2\times 3),\,\C P^1\times\C P^1$\\
8. $(6(5-2)+1;5,-2,-2,-4;2,3),\,\C P^2$\\
9. $(6(6-2)+1;5,-2,-2,-2,-4;3\times 4),\,\C P^2$\\
10. $(6(6-2)+1;5,-2,-2,-2,-4;1,2\times 4),\,\C P^2$\\
11. $(6(6-2)+1;5,-2,-2,-2,-4;3),\,\C P^2$\\
12. $(6(b-2)+1,b\geq 7;5,\underbrace{-2,\ldots,-2}_{b-3},-4;3\times k),\,k=b-2,\,\C P^2$\\
13. $(6(2-2)+3;4,-2;1),\,\C P^2$\\
14. $(6(3-2)+3;5,-3,-2;1,2),\,\C P^2$\\
15. $(6(3-2)+3;5,-3,-2;2\times 1),\,\C P^2$\\
16. $(6(4-2)+3;5,-2,-3,-2;2,3),\,\C P^2$\\
17. $(6(4-2)+3;5,-2,-3,-2;1,3),\,\C P^2$\\
18. $(6(4-2)+3;5,-2,-3,-2;1,2),\,\C P^2$\\
19. $(6(4-2)+3;5,-2,-3,-2;1,2),\,\C P^1\times\C P^1$\\
20. $(6(5-2)+3;5,-2,-2,-3,-2;3,4),\,\C P^2$\\
21. $(6(5-2)+3;5,-2,-2,-3,-2;1,4),\,\C P^2$\\
22. $(6(5-2)+3;5,-2,-2,-3,-2;1,4),\,\C P^1\times\C P^1$\\
23. $(6(5-2)+3;5,-2,-2,-3,-2;1,3),\,\C P^2$\\
24. $(6(5-2)+3;5,-2,-2,-3,-2;2),\,\C P^2$\\
25. $(6(5-2)+3;5,-2,-2,-3,-2;2),\,\C P^1\times\C P^1$\\
26. $(6(6-2)+3;5,-2,-2,-2,-3,-2;4,5),\,\C P^2$\\
27. $(6(6-2)+3;5,-2,-2,-2,-3,-2;1,5),\,\C P^2$\\
28. $(6(b-2)+3,b\geq
7;5,\underbrace{-2,\ldots,-2}_{b-3},-3,-2;k-1,k),\,k=b-1,\,\C P^2$

\subsubsection*{{\it Octahedral, $O_m$}}
$\mbox{}$\\
29. $E_7$\\
30. $(12(3-2)+1;5,-5;4\times 1),\,\C P^2$\\
31. $(12(4-2)+1;5,-2,-5;4\times 2),\,\C P^2$\\
32. $(12(4-2)+1;5,-2,-5;1,3\times 2),\,\C P^2$\\
33. $(12(5-2)+1;5,-2,-2,-5;4\times 3),\,\C P^2$\\
34. $(12(5-2)+1;5,-2,-2,-5;1,3\times 3),\,\C P^2$\\
35. $(12(5-2)+1;5,-2,-2,-5;1,3\times 3),\,\C P^1\times\C P^1$\\
36. $(12(5-2)+1;5,-2,-2,-5;2,2\times 3),\,\C P^2$\\
37. $(12(6-2)+1;5,-2,-2,-2,-5;4\times 4),\,\C P^2$\\
38. $(12(6-2)+1;5,-2,-2,-2,-5;1,3\times 4),\,\C P^2$\\
39. $(12(6-2)+1;5,-2,-2,-2,-5;3,4),\,\C P^2$\\
40. $(12(7-2)+1;5,-2,-2,-2,-2,-5;4\times 5),\,\C P^2$\\
41. $(12(7-2)+1;5,-2,-2,-2,-2,-5;4),\,\C P^2$\\
42. $(12(b-2)+1,b\geq 8;5,\underbrace{-2,\ldots,-2}_{b-3},-5;4\times k),\,k=b-2,\,\C P^2$\\
43. $(12(2-2)+7;4,-2,-2;2),\,\C P^2$\\
44. $(12(2-2)+7;4,-2,-2;1),\,\C P^2$\\
45. $(12(3-2)+7;5,-3,-2,-2;1,3),\,\C P^2$\\
46. $(12(3-2)+7;5,-3,-2,-2;2\times 1),\,\C P^2$\\
47. $(12(3-2)+7;5,-3,-2,-2;2\times 1),\,\C P^1\times\C P^1$\\
48. $(12(3-2)+7;5,-3,-2,-2;2),\,\C P^2$\\
49. $(12(4-2)+7;5,-2,-3,-2,-2;2,4),\,\C P^2$\\
50. $(12(4-2)+7;5,-2,-3,-2,-2;1,2),\,\C P^2$\\
51. $(12(4-2)+7;5,-2,-3,-2,-2;1,4),\,\C P^2$\\
52. $(12(4-2)+7;5,-2,-3,-2,-2;3),\,\C P^2$\\
53. $(12(5-2)+7;5,-2,-2,-3,-2,-2;3,5),\,\C P^2$\\
54. $(12(5-2)+7;5,-2,-2,-3,-2,-2;1,5),\,\C P^2$\\
55. $(12(5-2)+7;5,-2,-2,-3,-2,-2;1,5),\,\C P^1\times\C P^1$\\
56. $(12(5-2)+7;5,-2,-2,-3,-2,-2;4),\,\C P^2$\\
57. $(12(5-2)+7;5,-2,-2,-3,-2,-2;2),\,\C P^2$\\
58. $(12(6-2)+7;5,-2,-2,-2,-3,-2,-2;4,6),\,\C P^2$\\
59. $(12(6-2)+7;5,-2,-2,-2,-3,-2,-2;1,6),\,\C P^2$\\
60. $(12(6-2)+7;5,-2,-2,-2,-3,-2,-2;5),\,\C P^2$\\
61. $(12(b-2)+7,b\geq 7;5,\underbrace{-2,\ldots,-2}_{b-3},-3,-2,-2;k-2,k),\,k=b,\,\C P^2$\\
62. $(12(b-2)+7,b\geq
7;5,\underbrace{-2,\ldots,-2}_{b-3},-3,-2,-2;k-1),\,k=b,\,\C P^2$

\subsubsection*{{\it Icosahedral, $I_m$}}
$\mbox{}$\\
63. $E_8$\\
64. $(30(3-2)+1;5,-6;5\times 1),\,\C P^2$\\
65. $(30(4-2)+1;5,-2,-6;5\times 2),\,\C P^2$\\
66. $(30(4-2)+1;5,-2,-6;1,4\times 2),\,\C P^2$\\
67. $(30(5-2)+1;5,-2,-2,-6;5\times 3),\,\C P^2$\\
68. $(30(5-2)+1;5,-2,-2,-6;1,4\times 3),\,\C P^2$\\
69. $(30(5-2)+1;5,-2,-2,-6;1,4\times 3),\,\C P^1\times\C P^1$\\
70. $(30(5-2)+1;5,-2,-2,-6;2,3\times 3),\,\C P^2$\\
71. $(30(6-2)+1;5,-2,-2,-2,-6;5\times 4),\,\C P^2$\\
72. $(30(6-2)+1;5,-2,-2,-2,-6;1,4\times 4),\,\C P^2$\\
73. $(30(6-2)+1;5,-2,-2,-2,-6;3,2\times 4),\,\C P^2$\\
74. $(30(7-2)+1;5,-2,-2,-2,-2,-6;5\times 5),\,\C P^2$\\
75. $(30(7-2)+1;5,-2,-2,-2,-2,-6;4,5)$ $\C P^2$\\
76. $(30(8-2)+1;5,-2,-2,-2,-2,-2,-6;5\times 6),\,\C P^2$\\
77. $(30(8-2)+1;5,-2,-2,-2,-2,-2,-6;5)$ $\C P^2$\\
78. $(30(b-2)+1,b\geq 9;5,\underbrace{-2,\ldots,-2}_{b-3},-6;5\times k),\,k=b-2,\,\C P^2$\\
79. $(30(2-2)+7;3,-2;1),\,\C P^2$\\
80. $(30(3-2)+7;5,-4,-2;2\times 1,2),\,\C P^2$\\
81. $(30(3-2)+7;5,-4,-2;3\times 1),\,\C P^2$\\
82. $(30(4-2)+7;5,-2,-4,-2;2\times 2,3),\,\C P^2$\\
83. $(30(4-2)+7;5,-2,-4,-2;1,2\times 2),\,\C P^2$\\
84. $(30(4-2)+7;5,-2,-4,-2;1,2\times 2),\,\C P^1\times\C P^1$\\
85. $(30(4-2)+7;5,-2,-4,-2;1,2,3),\,\C P^2$\\
86. $(30(5-2)+7;5,-2,-2,-4,-2;2\times 3,4),\,\C P^2$\\
87. $(30(5-2)+7;5,-2,-2,-4,-2;1,2\times 3),\,\C P^2$\\
88. $(30(5-2)+7;5,-2,-2,-4,-2;2,3),\,\C P^2$\\
89. $(30(5-2)+7;5,-2,-2,-4,-2;2,3),\,\C P^1\times\C P^1$\\
90. $(30(5-2)+7;5,-2,-2,-4,-2;2,4),\,\C P^2$\\
91. $(30(5-2)+7;5,-2,-2,-4,-2;1,3,4),\,\C P^2$\\
92. $(30(5-2)+7;5,-2,-2,-4,-2;1,3,4),\,\C P^1\times\C P^1$\\
93. $(30(6-2)+7;5,-2,-2,-2,-4,-2;2\times 4,5),\,\C P^2$\\
94. $(30(6-2)+7;5,-2,-2,-2,-4,-2;1,4,5),\,\C P^2$\\
95. $(30(6-2)+7;5,-2,-2,-2,-4,-2;3),\,\C P^2$\\
96. $(30(6-2)+7;5,-2,-2,-2,-4,-2;3),\,\C P^1\times\C P^1$\\
97. $(30(b-2)+7,b\geq 7;5,\underbrace{-2,\ldots,-2}_{b-3},-4,-2;2\times (k-1),k),\,k=b-1,\,\C P^2$\\
98. $(30(2-2)+13;4,-3;2\times 1),\,\C P^2$\\
99. $(30(3-2)+13;5,-3,-3;1,2\times 2),\,\C P^2$\\
100. $(30(3-2)+13;5,-3,-3;2\times 1,2),\,\C P^2$\\
101. $(30(4-2)+13;5,-2,-3,-3;2,2\times 3),\,\C P^2$\\
102. $(30(4-2)+13;5,-2,-3,-3;1,2,3),\,\C P^2$\\
103. $(30(4-2)+13;5,-2,-3,-3;1,2,3),\,\C P^1\times\C P^1$\\
104. $(30(4-2)+13;5,-2,-3,-3;1,2\times 3),\,\C P^2$\\
105. $(30(4-2)+13;5,-2,-3,-3;2\times 2),\,\C P^2$\\
106. $(30(5-2)+13;5,-2,-2,-3,-3;3,2\times 4),\,\C P^2$\\
107. $(30(5-2)+13;5,-2,-2,-3,-3;1,2\times 4),\,\C P^2$\\
108. $(30(5-2)+13;5,-2,-2,-3,-3;1,2\times 4),\,\C P^1\times\C P^1$\\
109. $(30(5-2)+13;5,-2,-2,-3,-3;1,3,4),\,\C P^2$\\
110. $(30(5-2)+13;5,-2,-2,-3,-3;2,4),\,\C P^2$\\
111. $(30(5-2)+13;5,-2,-2,-3,-3;2,4),\,\C P^1\times\C P^1$\\
112. $(30(6-2)+13;5,-2,-2,-2,-3,-3;4,2\times 5),\,\C P^2$\\
113. $(30(6-2)+13;5,-2,-2,-2,-3,-3;1,2\times 5),\,\C P^2$\\
114. $(30(b-2)+13,b\geq 7;5,\underbrace{-2,\ldots,-2}_{b-3},-3,-3;k-1,2\times k),\,k=b-1,\,\C P^2$\\
115. $(30(2-2)+19;4,-2,-2,-2;3),\,\C P^2$\\
116. $(30(2-2)+19;4,-2,-2,-2;1),\,\C P^2$\\
117. $(30(3-2)+19;5,-3,-2,-2,-2;1,4),\,\C P^2$\\
118. $(30(3-2)+19;5,-3,-2,-2,-2;2\times 1),\,\C P^2$\\
119. $(30(4-2)+19;5,-2,-3,-2,-2,-2;2,5),\,\C P^2$\\
120. $(30(4-2)+19;5,-2,-3,-2,-2,-2;1,5),\,\C P^2$\\
121. $(30(5-2)+19;5,-2,-2,-3,-2,-2,-2;3,6),\,\C P^2$\\
122. $(30(5-2)+19;5,-2,-2,-3,-2,-2,-2;1,6),\,\C P^2$\\
123. $(30(5-2)+19;5,-2,-2,-3,-2,-2,-2;1,6),\,\C P^1\times\C P^1$\\
124. $(30(6-2)+19;5,-2,-2,-2,-3,-2,-2,-2;4,7),\,\C P^2$\\
125. $(30(6-2)+19;5,-2,-2,-2,-3,-2,-2,-2;1,7),\,\C P^2$\\
126. $(30(b-2)+19,b\geq
7;5,\underbrace{-2,\ldots,-2}_{b-3},-3,-2,-2,-2;k-3,k), k=b+1,\,\C
P^2$

\subsubsection*{{\it Symplectic fillings of links of Tetrahedral,
Octahedral and Icosahedral singularities of type $(3,1)$}}

\subsubsection*{{\it Case I.}} Here refer we to final picture in
Figure~\ref{fig:3-bl}. We use the notation $(m;D\cdot
D,-c_1,\ldots,-c_k;a_1\times i_1,\ldots,a_l\times i_l)$ to denote
the case~I symplectic filling of the link of $T_m,O_m$ or
$I_m$ given as the complement of a regular neighbourhood of the
compactifying divisor $K=A\cup B\cup D\cup C_1\cup\cdots\cup C_k$ in
$Z$. The notation is as for symplectic fillings of links of
singularities of type $(3,2)$.

\subsubsection*{{\it Tetrahedral, $T_m$}}
$\mbox{}$\\
127. $(6(2-2)+5;4,-2;1)$\\
128. $(6(3-2)+5;5,-3,-2;1,2)$\\
129. $(6(3-2)+5;5,-3,-2;2\times 1)$\\
130. $(6(4-2)+5;5,-2,-3,-2;2,3)$\\
131. $(6(4-2)+5;5,-2,-3,-2;1,3)$\\
132. $(6(4-2)+5;5,-2,-3,-2;1,2)$\\
133. $(6(5-2)+5;5,-2,-2,-3,-2;3,4)$\\
134. $(6(5-2)+5;5,-2,-2,-3,-2;1,4)$\\
135. $(6(5-2)+5;5,-2,-2,-3,-2;2)$\\
136. $(6(b-2)+5,b\geq
6;5,\underbrace{-2,\ldots,-2}_{b-3},-3,-2;k-1,k), k=b-1$

\subsubsection*{{\it Octahedral, $O_m$}}
$\mbox{}$\\
137. $(12(2-2)+5;2;)$\\
138. $(12(3-2)+5;5,-5;4\times 1)$\\
139. $(12(4-2)+5;5,-2,-5;4\times 2)$\\
140. $(12(4-2)+5;5,-2,-5;1,3\times 2)$\\
141. $(12(5-2)+5;5,-2,-2,-5;4\times 3)$\\
142. $(12(5-2)+5;5,-2,-2,-5;1,3\times 3)$\\
143. $(12(5-2)+5;5,-2,-2,-5;2,2\times 3)$\\
144. $(12(6-2)+5;5,-2,-2,-2,-5;4\times 4)$\\
145. $(12(6-2)+5;5,-2,-2,-2,-5;3,4)$\\
146. $(12(7-2)+5;5,-2,-2,-2,-2,-5;4\times 5)$\\
147. $(12(7-2)+5;5,-2,-2,-2,-2,-5;4)$\\
148. $(12(b-2)+5,b\geq 8;5,\underbrace{-2,\ldots,-2}_{b-3},-5;4\times k), k=b-2$\\
149. $(12(2-2)+11;4,-2,-2;2)$\\
150. $(12(2-2)+11;4,-2,-2;1)$\\
151. $(12(3-2)+11;5,-3,-2,-2;1,3)$\\
152. $(12(3-2)+11;5,-3,-2,-2;2\times 1)$\\
153. $(12(3-2)+11;5,-3,-2,-2;2)$\\
154. $(12(4-2)+11;5,-2,-3,-2,-2;2,4)$\\
155. $(12(4-2)+11;5,-2,-3,-2,-2;1,4)$\\
156. $(12(4-2)+11;5,-2,-3,-2,-2;3)$\\
157. $(12(5-2)+11;5,-2,-2,-3,-2,-2;3,5)$\\
158. $(12(5-2)+11;5,-2,-2,-3,-2,-2;1,5)$\\
159. $(12(5-2)+11;5,-2,-2,-3,-2,-2;4)$\\
160. $(12(b-2)+11,b\geq 6;5,\underbrace{-2,\ldots,-2}_{b-3},-3,-2,-2;k-2,k), k=b$\\
161. $(12(b-2)+11,b\geq
6;5,\underbrace{-2,\ldots,-2}_{b-3},-3,-2,-2;k-1), k=b$

\subsubsection*{{\it Icosahedral, $I_m$}}
$\mbox{}$\\
162. $(30(2-2)+11;1;)$\\
163. $(30(3-2)+11;5,-6;5\times 1)$\\
164. $(30(4-2)+11;5,-2,-6;5\times 2)$\\
165. $(30(4-2)+11;5,-2,-6;1,4\times 2)$\\
166. $(30(5-2)+11;5,-2,-2,-6;5\times 3)$\\
167. $(30(5-2)+11;5,-2,-2,-6;1,4\times 3)$\\
168. $(30(5-2)+11;5,-2,-2,-6;2,3\times 3)$\\
169. $(30(6-2)+11;5,-2,-2,-2,-6;5\times 4)$\\
170. $(30(6-2)+11;5,-2,-2,-2,-6;3,2\times 4)$\\
171. $(30(7-2)+11;5,-2,-2,-2,-2,-6;5\times 5)$\\
172. $(30(7-2)+11;5,-2,-2,-2,-2,-6;4,5)$\\
173. $(30(8-2)+11;5,-2,-2,-2,-2,-2,-6;5\times 6)$\\
174. $(30(8-2)+11;5,-2,-2,-2,-2,-2,-6;5)$\\
175. $(30(b-2)+11,b\geq 9;5,\underbrace{-2,\ldots,-2}_{b-3},-6;5\times k), k=b-2$\\
176. $(30(2-2)+17;3,-2;1)$\\
177. $(30(3-2)+17;5,-4,-2;2\times 1,2)$\\
178. $(30(3-2)+17;5,-4,-2;3\times 1)$\\
179. $(30(4-2)+17;5,-2,-4,-2;2\times 2,3)$\\
180. $(30(4-2)+17;5,-2,-4,-2;1,2\times 2)$\\
181. $(30(4-2)+17;5,-2,-4,-2;1,2,3)$\\
182. $(30(5-2)+17;5,-2,-2,-4,-2;2\times 3,4)$\\
183. $(30(5-2)+17;5,-2,-2,-4,-2;2,3)$\\
184. $(30(5-2)+17;5,-2,-2,-4,-2;2,4)$\\
185. $(30(5-2)+17;5,-2,-2,-4,-2;1,3,4)$\\
186. $(30(6-2)+17;5,-2,-2,-2,-4,-2;2\times 4,5)$\\
187. $(30(6-2)+17;5,-2,-2,-2,-4,-2;3)$\\
188. $(30(b-2)+17,b\geq 7;5,\underbrace{-2,\ldots,-2}_{b-3},-4,-2;2\times (k-1),k), k=b-1$\\
189. $(30(2-2)+23;4,-3;2\times 1)$\\
190. $(30(3-2)+23;5,-3,-3;1,2\times 2)$\\
191. $(30(3-2)+23;5,-3,-3;2\times 1,2)$\\
192. $(30(4-2)+23;5,-2,-3,-3;2,2\times 3)$\\
193. $(30(4-2)+23;5,-2,-3,-3;1,2,3)$\\
194. $(30(4-2)+23;5,-2,-3,-3;1,2\times 3)$\\
195. $(30(4-2)+23;5,-2,-3,-3;2\times 2)$\\
196. $(30(5-2)+23;5,-2,-2,-3,-3;3,2\times 4)$\\
197. $(30(5-2)+23;5,-2,-2,-3,-3;1,2\times 4)$\\
198. $(30(5-2)+23;5,-2,-2,-3,-3;2,4)$\\
199. $(30(b-2)+23,b\geq 6;5,\underbrace{-2,\ldots,-2}_{b-3},-3,-3;k-1,2\times k), k=b-1$\\
200. $(30(2-2)+29;4,-2,-2,-2;3)$\\
201. $(30(2-2)+29;4,-2,-2,-2;1)$\\
202. $(30(3-2)+29;5,-3,-2,-2,-2;1,4)$\\
203. $(30(4-2)+29;5,-2,-3,-2,-2,-2;2,5)$\\
204. $(30(4-2)+29;5,-2,-3,-2,-2,-2;1,5)$\\
205. $(30(5-2)+29;5,-2,-2,-3,-2,-2,-2;3,6)$\\
206. $(30(5-2)+29;5,-2,-2,-3,-2,-2,-2;1,6)$\\
207. $(30(b-2)+29,b\geq
7;5,\underbrace{-2,\ldots,-2}_{b-3},-3,-2,-2,-2;k-3,k), k=b+1$

\subsubsection*{{\it Case II.}} Again we refer to the final picture in
Figure~\ref{fig:3-bl}. We use the notation $(m;D\cdot
D,-c_1,\ldots,-c_k;i,j;a_1\times i_1,\ldots,a_l\times i_l)$ to
denote the case~II symplectic filling of the link of $T_m,O_m$
or $I_m$ given as the complement of a regular neighbourhood of the
compactifying divisor $K=A\cup B\cup D\cup C_1\cup\cdots\cup C_k$ in
$Z$. Here the numbers $i$ and $j$ denote the existence of
$(-1)$-curves intersecting $B$ and $C_i$ and $D$ and $C_j$
respectively.

\subsubsection*{{\it Tetrahedral, $T_m$}}
$\mbox{}$\\
208. $(6(2-2)+5;4,-2;1,1;)$\\
209. $(6(3-2)+5;5,-3,-2;1,1;2)$\\
210. $(6(3-2)+5;5,-3,-2;1,2;1)$\\
211. $(6(4-2)+5;5,-2,-3,-2;1,2;3)$\\
212. $(6(4-2)+5;5,-2,-3,-2;1,3;2)$\\
213. $(6(4-2)+5;5,-2,-3,-2;2,3;1)$\\
214. $(6(5-2)+5;5,-2,-2,-3,-2;1,3;4)$

\subsubsection*{{\it Octahedral, $O_m$}}
$\mbox{}$\\
215. $(12(3-2)+5;5,-5;1,1;3\times 1)$\\
216. $(12(4-2)+5;5,-2,-5;1,2;3\times 2)$\\
217. $(12(4-2)+5;5,-2,-5;2,2;1,2\times 2)$\\
218. $(12(5-2)+5;5,-2,-2,-5;1,3;3\times 3)$\\
219. $(12(5-2)+5;5,-2,-2,-5;3,3;1,2\times 3)$\\
220. $(12(5-2)+5;5,-2,-2,-5;3,3;2,3)$\\
221. $(12(6-2)+5;5,-2,-2,-2,-5;4,4;3)$\\
222. $(12(2-2)+11;4,-2,-2;1,2;)$\\
223. $(12(3-2)+11;5,-3,-2,-2;1,1;3)$\\
224. $(12(3-2)+11;5,-3,-2,-2;1,3;1)$\\
225. $(12(4-2)+11;5,-2,-3,-2,-2;1,2;4)$\\
226. $(12(5-2)+11;5,-2,-2,-3,-2,-2;1,3;5)$

\subsubsection*{{\it Icosahedral, $I_m$}}
$\mbox{}$\\
227. $(30(3-2)+11;5,-6;1,1;4\times 1)$\\
228. $(30(4-2)+11;5,-2,-6;1,2;4\times 2)$\\
229. $(30(4-2)+11;5,-2,-6;2,2;1,3\times 2)$\\
230. $(30(5-2)+11;5,-2,-2,-6;1,3;4\times 3)$\\
231. $(30(5-2)+11;5,-2,-2,-6;3,3;1,3\times 3)$\\
232. $(30(5-2)+11;5,-2,-2,-6;3,3;2,2\times 3)$\\
233. $(30(6-2)+11;5,-2,-2,-2,-6;4,4;3,4)$\\
234. $(30(7-2)+11;5,-2,-2,-2,-2,-6;5,5;4)$\\
235. $(30(2-2)+17;3,-2;1,1;)$\\
236. $(30(3-2)+17;5,-4,-2;1,1;1,2)$\\
237. $(30(3-2)+17;5,-4,-2;1,2;2\times 1)$\\
238. $(30(4-2)+17;5,-2,-4,-2;1,2;2,3)$\\
239. $(30(4-2)+17;5,-2,-4,-2;1,3;2\times 2)$\\
240. $(30(4-2)+17;5,-2,-4,-2;2,2;1,3)$\\
241. $(30(4-2)+17;5,-2,-4,-2;2,3;1,2)$\\
242. $(30(5-2)+17;5,-2,-2,-4,-2;1,3;3,4)$\\
243. $(30(5-2)+17;5,-2,-2,-4,-2;3,3;1,4)$\\
244. $(30(5-2)+17;5,-2,-2,-4,-2;3,4;2)$\\
245. $(30(2-2)+23;4,-3;1,1;1)$\\
246. $(30(3-2)+23;5,-3,-3;1,1;2\times 2)$\\
247. $(30(3-2)+23;5,-3,-3;1,2;1,2)$\\
248. $(30(3-2)+23;5,-3,-3;2,2;2\times 1)$\\
249. $(30(4-2)+23;5,-2,-3,-3;1,2;2\times 3)$\\
250. $(30(4-2)+23;5,-2,-3,-3;1,3;2,3)$\\
251. $(30(4-2)+23;5,-2,-3,-3;2,3;1,3)$\\
252. $(30(4-2)+23;5,-2,-3,-3;3,3;1,2)$\\
253. $(30(5-2)+23;5,-2,-2,-3,-3;1,3;2\times 4)$\\
254. $(30(2-2)+29;4,-2,-2,-2;1,3;)$\\
255. $(30(3-2)+29;5,-3,-2,-2,-2;1,1;4)$\\
256. $(30(4-2)+29;5,-2,-3,-2,-2,-2;1,2;5)$\\
257. $(30(5-2)+29;5,-2,-2,-3,-2,-2,-2;1,3;6)$

\section*{Acknowledgements} The main part of this work was carried out
during the first author's stay at Hokkaido University. This stay was
supported by a JSPS Postdoctoral Fellowship for Foreign Researchers
in Japan which the first author gratefully acknowledges.

{\sc Department of Mathematics, Middle East Technical University, 06531 Ankara, Turkey.}

E-mail address: {\tt bhupal@metu.edu.tr} 

\medskip
{\sc Department of Mathematics, Hokkaido University, Sapporo 060-0810,
Japan.} 

E-mail address: {\tt ono@math.sci.hokudai.ac.jp} 

\end{document}